\newtheorem{prop}{Proposition}
\newtheorem{lemma}{Lemma}
\newtheorem{thm}{Theorem}
\newtheorem{cor}{Corollary}
\theoremstyle{definition}
\newtheorem{defn}{Definition}
\numberwithin{prop}{subsection}
\numberwithin{lemma}{subsection}
\numberwithin{thm}{subsection}
\numberwithin{cor}{subsection}
\numberwithin{defn}{subsection}
\DeclareMathOperator{\ord}{ord}
\DeclareMathOperator{\Res}{Res}
\DeclareMathOperator{\MinResLoc}{MinResLoc}
\DeclareMathOperator{\Bary}{Bary}
\DeclareMathOperator{\Vol}{Vol}
\DeclareMathOperator{\SL}{SL}
\DeclareMathOperator{\SO}{SO}
\DeclareMathOperator{\SU}{SU}
\DeclareMathOperator{\Min}{Min}
\DeclareMathOperator{\Sphere}{S}
\DeclareMathOperator{\pcap}{pcap}
\DeclareMathOperator{\dH}{\textbf{d}_{\hh}}
\DeclareMathOperator{\dB}{\textbf{d}_{\mathbb{B}}}
\DeclareMathOperator{\dSL}{\textbf{d}_{\textrm{SL}}}
\newcommand{\zetaG}{\zeta_{\text{Gauss}}}
\newcommand{\deldel}[1]{\frac{\partial}{\partial #1}}
\newcommand{\deldeltwo}[1]{\frac{\partial^2}{\partial #1^2}}
\newcommand{\zbar}{\overline{z}}
\newcommand{\fre}{f_{\textrm{Re}}}
\newcommand{\fim}{f_{\textrm{Im}}}
\newcommand{\vn}{\vec{n}}
\newcommand{\vv}{\vec{v}}
\newcommand{\CC}{\mathbb{C}}
\newcommand{\EE}{\mathbb{E}}
\newcommand{\PP}{\mathbb{P}}
\newcommand{\RR}{\mathbb{R}}
\newcommand{\BB}{\mathbb{B}}
\newcommand{\NN}{\mathbb{N}}
\newcommand{\hh}{\mathbb{H}}
\renewcommand{\H}{\textrm{H}}
\newcommand{\del}{\partial}
\title{Hyperbolic Equivariants of Rational Maps}
\date{\today}
\author{Kenneth Jacobs}
\begin{document}

\maketitle

\begin{abstract}
Let $K$ denote either $\RR$ or $\CC$. In this article, we introduce two new equivariants associated to a rational map $f\in K(z)$. These objects naturally live on a real hyperbolic space, and carry information about the action of $f$ on $\PP^1(K)$. When $K=\CC$ we relate the asymptotic behavior of these equivariants to the conformal barycenter of the measure of maximal entropy. We also give a complete description of these objects for rational maps of degree $d=1$. The constructions in this article are based on work of Rumely in the context of rational maps over non-Archimedean fields; similarities between the two theories are highlighted throughout the article.
\end{abstract}

\section{Introduction}

In this paper we introduce two new equivariants and one new invariant associated to the conjugation of a rational map $f\in K(z)$ by $SL_2(K)$, where $K=\RR$ or $K=\CC$. The first equivariant is a function $R_F$ defined on real hyperbolic $[K:\RR]+1$ space that measures the distortion of the unit sphere in $K^2$ induced by homogeneous lifts $F^\gamma$ of $\SL_2(K)$-conjugates $f^\gamma$. The second equivariant is the set of points at which $R_F$ is minimized. The motivation for these objects comes from analogous constructions due to Rumely \cite{Ru1, Ru2} for rational maps defined over complete, algebraically closed, non-Archimedean fields; throughout the article we will explain the connections between ours and Rumely's constructions.

Let $K=\RR$ or $\CC$, and let $f\in K(z)$ have degree $d\geq 1$. Fix a homogeneous lift $F:K^2 \to K^2$ of $f$, given by a pair of coprime polynomials $F = (F_0, F_1)$, $F_i\in K[X,Y]$ of degree $d$ satisfying $f(z) = \frac{F_0(z,1)}{F_1(z,1)}$. Let $S_K$ denote the unit sphere (with respect to the Euclidean norm $||\cdot||$) in $K^2$, and let $d\sigma_K$ be the volume form on $S_K$ normalized to have total volume 1. The quantity
\[
R(F) = \int_{S_K} \log ||Fz|| d\sigma_K
\] is a measurement of distortion of the sphere induced by $F$; this distortion can be expressed explicitly in terms of Alexander's projective capacity (\cite{Al}; see Theorem~\ref{thm:pcap} below).

For $\gamma\in \SL_2(K)$, let $F^\gamma(z) = \gamma^{-1} \cdot F(\gamma\cdot z)$, where $\gamma$ acts on $K^2$ by left multiplication. The matrices that preserve the Euclidean norm on $K^2$ form a group $\SU_2(K)$, and one checks directly that $R(F^\tau) = R(F)$ for $\tau\in \SU_2(K)$. Thus, the function $R_F(\gamma):= R(F^\gamma)$ defined on $\SL_2(K)$ descends to a well defined function
\[
R_F: \SL_2(K) / \SU_2(K) \to \RR\ .
\] We observe that the space $\SL_2(K)/\SU_2(K)$ is naturally isometric to real hyperbolic $[K:\RR]+1$ space, which for the moment we denote $\mathfrak{h}_K$. The following theorem collects the basic properties of this function:
\begin{thm}\label{thm:basicprops}
Let $f\in K(z)$ with degree $d\geq 2$, and fix a homogeneous lift $F$ of $f$. The function $R_F:\mathfrak{h}_K \to \RR$ is smooth, proper, and subharmonic with respect to the hyperbolic Laplacian. In particular, it attains a minimum on $\mathfrak{h}_K$.
\end{thm}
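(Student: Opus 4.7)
The plan is to verify smoothness, properness, and subharmonicity in turn, then obtain the minimum from continuity and properness on the complete space $\mathfrak{h}_K$.

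\textbf{Smoothness.} Coprimality of $F_0, F_1$ gives $\|F^\gamma z\| > 0$ on $\SL_2(K) \times S_K$, and $\gamma^{-1} = \operatorname{adj}(\gamma)$ makes $F^\gamma z$ polynomial in the entries of $\gamma$ and $z$. Thus $\log\|F^\gamma z\|$ is smooth in $(\gamma, z)$; compactness of $S_K$ permits differentiation under the integral to any order, and right-$\SU_2(K)$-invariance lets the result descend smoothly to $\mathfrak{h}_K$.

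\textbf{Properness.} Combining the inequality $\|\gamma^{-1} v\| \geq \|v\|/\|\gamma\|$ with the homogeneity $F(\gamma z) = \|\gamma z\|^d F(\gamma z/\|\gamma z\|)$ and integrating yields
\[
R_F(\gamma) \geq d \int_{S_K} \log\|\gamma z\|\,d\sigma_K(z) + \min_{u \in S_K} \log\|F(u)\| - \log\|\gamma\|.
\]
The middle term is finite by coprimality and compactness. Using $\SU_2(K)$-invariance of $\sigma_K$ to reduce to $\gamma = \operatorname{diag}(s, s^{-1})$ with $s \geq 1$, a standard direct computation yields $\int \log\|\gamma z\|\,d\sigma_K = \log s - O(1)$ while $\|\gamma\| = s$, so $R_F(\gamma) \geq (d-1)\log s - C$ for a constant $C$ depending only on $F$. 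Since $d \geq 2$ and $\log s$ grows with the hyperbolic distance $d_{\mathfrak{h}_K}(\gamma, \ast)$, properness follows.

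\textbf{Subharmonicity.} For $K = \CC$, lift $R_F$ to $\SL_2(\CC)$: for each fixed $z$, the map $\gamma \mapsto F^\gamma z$ is polynomial (hence holomorphic), so $\log\|F^\gamma z\|$ is plurisubharmonic as a pullback of the Fubini--Study potential $\log\|w\|$. Integrating over $S_\CC$ preserves both PSH and right-$\SU_2$-invariance. Decompose $\sl_2(\CC) = \mathfrak{k} \oplus \mathfrak{p}$ into the vertical $\mathfrak{k} = \mathfrak{su}_2$ and horizontal $\mathfrak{p}$ (trace-zero Hermitian); the complex structure $J$ sends $\mathfrak{p}$ to $\mathfrak{k}$. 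Because the lift is constant along $\mathfrak{k}$, the Levi form in a direction $A \in \mathfrak{p}$ reduces to $\tfrac14 \partial_A^2 R_F$, so PSH forces $(R_F \circ \gamma)''(0) \geq 0$ along every geodesic $\gamma(s) = \gamma_0 e^{sA}$ in $\mathfrak{h}_\CC$. This geodesic convexity is strictly stronger than hyperbolic subharmonicity. For $K = \RR$, the analogous lift to $\SL_2(\CC)$ is only right-$\SO_2(\RR)$-invariant, so the vertical-direction derivative $\partial_{JA}^2$ no longer vanishes for $A \in \mathfrak{p}_\RR$; I would complete the argument by a direct computation in Iwasawa coordinates $\gamma = \begin{pmatrix}\sqrt{y} & x/\sqrt{y} \\ 0 & 1/\sqrt{y}\end{pmatrix}$, obtaining $R_F(x, y) = -\tfrac{d+1}{2}\log y + \int \log|\tilde G(x, y, z)|\,d\sigma_\RR(z)$ for an explicit complex expression $\tilde G$, and verifying that the $z$-average restores nonnegativity of $\Delta_{\mathrm{hyp}} R_F$.

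\textbf{Minimum and main obstacle.} Continuity and properness on the complete Riemannian manifold $\mathfrak{h}_K$ yield an attained global minimum. The main obstacle is the subharmonicity proof for $K = \RR$: the PSH lift on $\SL_2(\CC)$ is only $\SO_2$-invariant, so the elegant descent used in the complex case no longer applies directly, forcing either the explicit Iwasawa computation above or a more subtle averaging argument over the $S_\RR$ variable to control the non-invariant contributions.
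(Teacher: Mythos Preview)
Your arguments for smoothness and properness are essentially the same as the paper's (Theorem~\ref{thm:growthrates} carries out exactly the diagonal reduction and norm estimate you sketch, with the same $(d-1)/2$ growth rate).

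For subharmonicity over $K=\CC$, your approach is genuinely different from the paper's and is correct. The key observation---that $R_F$ lifts to a plurisubharmonic, right-$\SU_2$-invariant function on $\SL_2(\CC)$, and that for $A\in\mathfrak{p}$ the commuting factorization $e^{(s+it)A}=e^{sA}e^{itA}$ with $e^{itA}\in\SU_2$ kills the $t$-derivative---cleanly yields geodesic convexity, hence subharmonicity. This is more conceptual than the paper's route, which instead computes the hyperbolic Laplacian explicitly (Theorem~\ref{thm:hypLap}) and obtains the exact formula
\[
\Delta_h^{\hh_K} R_F([\gamma]) = [K:\RR]\left(d-1 + 4\int_{\PP^1(K)}\|f^\gamma(w),w\|^2\,\omega_K\right).
\]
What the paper's computation buys is a uniform treatment of $\RR$ and $\CC$, together with an identification of the Laplacian in dynamically meaningful terms; what your argument buys is a short structural proof over $\CC$ that in fact gives the stronger conclusion of geodesic convexity.

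The genuine gap is the case $K=\RR$. You correctly diagnose why the PSH descent fails: for $A\in\mathfrak{p}_\RR$ one has $iA\in\mathfrak{su}_2\setminus\mathfrak{so}_2$, and the extended function on $\SL_2(\CC)$ is only right-$\SO_2(\RR)$-invariant, so the $t$-derivative does not vanish. But you then only \emph{propose} an Iwasawa-coordinate computation without carrying it out, and the claim that ``the $z$-average restores nonnegativity'' is exactly the nontrivial content that needs proof. The paper fills this by the explicit Laplacian calculation of Section~3.2 (Lemmas~\ref{lemma:derivativesofL}--\ref{lem:derivativesofP} and Proposition~\ref{prop:hypLapLP}), which handles both fields at once. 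If you want to keep your conceptual approach, you would need either to execute that direct computation for $K=\RR$, or to find a substitute for the PSH argument adapted to the real symmetric-space structure of $\SL_2(\RR)/\SO_2$.
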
 

Theorem~\ref{thm:basicprops} follows from explicit calculations in which we estimate the growth rate of $R_F$ as $[\gamma]$ `approaches the boundary of $\mathfrak{h}_K$' (Theorem~\ref{thm:growthrates}) and provide an explicit expression for the hyperbolic Laplacian of $R_F$ (Theorem~\ref{thm:hypLap}). 

The function $R_F$ is also equivariant, in the sense that $R_F([\gamma]) = R_{F^\gamma}([\textrm{id}])$ where $[\textrm{id}]$ is the (class of the) identity matrix in $\SL_2(K)/\SU_2(K)$. Consequently, the minimal value of $R_F$ is an invariant of the lift $F$, and we define
\begin{defn}
The {\em min invariant} of $f\in K(z)$ is defined to be
\[
m_K(f):= \min_{[\gamma]\in \SL_2(K)/\SU_2(K)} R_F([\gamma]) - \frac{1}{2d} \log |\Res(F_0, F_1)|\ ;
\] here, $\Res(F_0, F_1)$ is the homogeneous resultant of the lift $F$.
\end{defn} The normalization via the resultant addresses the fact that $R_F$ scales logarithmically for different lifts of $F$, i.e. if $c\in K^\times$ then $R_{cF} = R_F + \log |c|$. While there are many ways that one could normalize $R_F$ to address this issue of scaling, our choice is based on analogy with the non-Archimedean setting where the analogous normalization via the resultant has arithmetic significance: in that context, $m_K(f) = 0$ if and only if $f$ has potential good reduction (see \cite{Ru1}).

When $K=\CC$, we will also analyze the set
\[
\Min(f) = \{[\gamma]\in \SL_2(K)/\SU_2(K)\ : \ [\gamma] \textrm{ minimizes } R_F\}\ .
\] For this, the notion of conformal barycenters of measures on $S^2$ will be useful. Douady and Earle introduced the notion of conformal barycenters in \cite{DE} in order to study conformally natural extensions of homeomorphisms of the sphere to the hyperbolic ball. More precisely, the barycenter of an admissible measure $\mu$ on the sphere $S^2$ is a distinguished point of the unit ball in $\RR^3$ whose existence can be established by showing that it is the minimum of a certain convex function $h_\mu$. Details will be given below in Section~\ref{sect:barycentersDE}.

When $K=\CC$, let $\omega_\CC$ denote the Fubini-Study form on $\PP^1(\CC)$, and for $f\in \CC(z)$ let 
\[
\omega_f:= f^* \omega_{\CC} + f_* \omega_{\CC}\ ,
\] where the pushforward is in the sense of measures. Then $\omega_f$ is a positive measure on $\PP^1(\CC)$ of total mass $d+1$; its pullback to $S^2$ under stereographic projection will be denoted $\widehat{\omega_{f}}$. The following theorem gives a geometric interpretation of the conjugates realizing the minimum of $R_F$:
\begin{thm}\label{thm:minimizersintro}
Identifying $\SL_2(\CC)/\SU_2(\CC)$ with the hyperbolic ball $\BB\subseteq \RR^3$, the hyperbolic gradient of $R_F$ is
\[
\nabla_h R_F(\xi) = \int_{S^2} \zeta\ \widehat{\omega_{f^{\gamma_\xi}}}\ ,
\] where $[\gamma_\xi]\in \SL_2(\CC)/\SU_2(\CC)$ is the class corresponding to $\xi\in \BB$. In particular, if $[\gamma]\in \SL_2(\CC) / \SU_2(\CC)$ is in $\Min(f)$, then $\Bary(\widehat{\omega_{f^\gamma}})$ is the origin in the unit ball.
\end{thm}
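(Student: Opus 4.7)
The strategy is to exploit the equivariance relation $R_F([\gamma]) = R_{F^\gamma}([\mathrm{id}])$ noted earlier in the excerpt. Differentiating at any point $\xi \in \BB$ reduces to differentiating $R_{F^{\gamma_\xi}}$ at the origin, so it suffices to establish the gradient identity at $\xi = 0$ and then apply it with $F$ replaced by $F^{\gamma_\xi}$. The corollary will follow once the gradient formula is in hand, since Theorem~\ref{thm:basicprops} guarantees $R_F$ is smooth and attains its minimum in the interior of $\BB$, forcing the hyperbolic gradient to vanish there.

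To compute $\nabla_h R_F(0)$, I would parametrize paths through $[\mathrm{id}]$ by one-parameter subgroups $g_t = \exp(tX)$, where $X$ ranges over the three Pauli matrices (a basis of the tangent space $i\mathfrak{su}_2$, identifying this space with $\RR^3$). A first-order expansion gives $g_t^{-1} F(g_t z) = F(z) + t(DF(z) X z - X F(z)) + O(t^2)$, and differentiating under the integral sign yields
\[
\frac{d}{dt}\Big|_{t=0} R_F([g_t]) \;=\; \int_{S_\CC} \frac{\mathrm{Re}\,\langle F(z),\, DF(z) X z - X F(z)\rangle}{\|F(z)\|^2}\, d\sigma_\CC(z).
\]
The goal is then to recognize the right-hand side as the pairing of $X$ (viewed in $\RR^3$) with $\int_{S^2}\zeta\,\widehat{\omega_f}$.

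The key geometric input is that $\log\|F(z)\|$ is a local potential for the pullback Fubini--Study form: after lifting, $\pi^*f^*\omega_\CC = \tfrac{1}{\pi}\,dd^c\log\|F(z)\|$. Pushing the integral on $S_\CC = S^3$ down to $S^2 \cong \PP^1(\CC)$ via stereographic projection, I expect the term $X F(z)$ to contribute $f^*\omega_\CC$ paired against the $X$-component of the coordinate function $\zeta$, while a change of variables $w = f(z)$ converts the $DF(z) X z$ term into $f_*\omega_\CC$ paired against the same coordinate. Summing over the three Pauli directions assembles these into the vector identity $\nabla_h R_F(0) = \int_{S^2}\zeta\,\widehat{\omega_f}$, with any normalization constants absorbed into the identification of the tangent space at $0\in\BB$ with $\RR^3$.

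For the second assertion, at any $[\gamma]\in\Min(f)$ smoothness of $R_F$ forces $\nabla_h R_F([\gamma]) = 0$, so the gradient formula yields $\int_{S^2}\zeta\,\widehat{\omega_{f^\gamma}} = 0$; that is, the Euclidean first moment of $\widehat{\omega_{f^\gamma}}$ on $S^2 \subset \RR^3$ vanishes. By the Douady--Earle characterization of the conformal barycenter (to be recalled in Section~\ref{sect:barycentersDE}), this is precisely the condition $\Bary(\widehat{\omega_{f^\gamma}}) = 0$. The principal obstacle is the bookkeeping in the middle step: correctly matching the $S^3$-integral against $d\sigma_\CC$ with the Fubini--Study data on $\PP^1(\CC)$ so that the two terms $DF(z) X z$ and $X F(z)$ cleanly separate into the pushforward and pullback components of $\omega_f$, and verifying that the coordinate function produced by each Pauli generator is the corresponding component of $\zeta$.
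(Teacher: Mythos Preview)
Your overall strategy---reduce to the origin via equivariance, then show the second assertion follows from smoothness and the gradient formula---matches the paper's. The divergence is in how the derivative at the origin is computed. The paper does \emph{not} differentiate the raw $S^3$-integral along all three Pauli directions. Instead it uses the complex-analytic identity (Proposition~\ref{eq:formulaforgradient})
\[
R_F([\gamma])=\tfrac{d-1}{2}\log t+\int_{\PP^1(\CC)}\psi(\alpha;z,t)\,f^*\omega_{z,t}-\tfrac{d}{2}+\tfrac{1}{2}\log c(z,t;f),
\]
which already has $f^*\omega$ built in via $dd^c$. It then computes a \emph{single} directional derivative, along the diagonal path $[\eta_A]$, obtaining $\partial_A R_F|_{A=0}=\int_{\PP^1}\bigl(||\alpha,\infty||^2-||\alpha,0||^2\bigr)\,\omega_f$; the remaining directions are obtained by pre-rotating with $\tau\in\SU_2(\CC)$ and using $R_F([\tau\eta_A])=R_{F^\tau}([\eta_A])$. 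A short linear-algebra lemma (Lemma~\ref{lem:barycentercriteria}) then assembles the three directional derivatives into the vector $\int_{S^2}\zeta\,\widehat{\omega_f}$. So the paper never has to split a Lie-algebra expression into pullback and pushforward pieces; that splitting is already encoded in the $dd^c$ formula, and only one explicit computation is needed.

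Your direct Lie-algebra route is viable in principle, but two points need correction. First, the roles of the two terms are reversed: for Hermitian traceless $X$ the quantity $\mathrm{Re}\langle F,XF\rangle/\|F\|^2$ equals the moment-map coordinate $\zeta_X$ evaluated at $[F(z)]=f([z])$, so the $XF(z)$ term produces $\int\zeta_X\,f_*\omega_\CC$, i.e.\ the \emph{pushforward} contribution, not the pullback. Second, and more seriously, the mechanism you propose for the $DF(z)\,Xz$ term---``change of variables $w=f(z)$''---is not what actually happens. That term is $\int_{S^3} d(\log\|F\|)(Xz)\,d\sigma$; after removing the normal component of $Xz$ (which integrates to zero by symmetry) this is the Lie derivative of the descended potential $\tilde\phi=\log\|F(\alpha,1)\|-\tfrac{d}{2}\log(1+|\alpha|^2)$ along the $\SL_2$-vector field $V_X$ on $\PP^1$, integrated against $\omega_\CC$. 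Converting this into $\int\zeta_X\,f^*\omega_\CC$ requires an integration by parts (Stokes for $\mathcal{L}_{V_X}$) together with the identity $dd^c\tilde\phi=f^*\omega_\CC-d\,\omega_\CC$, not a change of variables on the target. This is exactly the step the paper's $dd^c$ formula has already absorbed, which is why their route is cleaner.
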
 Thus, conjugates for which $R_F$ is minimized correspond to `balanced' representatives of $f$, where balanced is understood in terms of the measure $\omega_f$. In the non-Archimedean setting, classes $[\gamma]$ belonging to the analogue of $\Min(f)$ correspond to conjugates $f^\gamma$ that have semi-stable reduction in the sense of Geometric Invariant Theory (\cite{Ru2}). While there is no natural notion of reduction in the complex setting, one can view `minimizing $R_F$' as a complex analogue of having semi-stable reduction, and Theorem~\ref{thm:minimizersintro} gives a geometric interpretation of what `semi-stable' might mean in this context. It would be very interesting to understand whether the classes $[\gamma]$ attaining the minimum have an interpretation in terms of the moduli space $\mathcal{M}_d$ of degree $d$ rational maps. We remark that the use of barycenters in studying moduli-theoretic questions has already been carried out by DeMarco \cite{De2}, who has shown that the barycenter of the measure of maximal entropy can be used to give a compactification of $\mathcal{M}_2$. 

We next turn our attention to the asymptotic behavior of $R_F$. The following theorem describes the limits of $R_{F^n}$ and $\Min(f^n)$. Recall that $h_\mu$ is a convex function on the unit ball (with the hyperbolic metric) introduced by Douady and Earle that is minimized precisely on the barycenter of measures on the sphere.
\begin{thm}\label{thm:asymptoticsoverC}
Let $f\in \CC(z)$ have degree $d\geq 2$, and let $\mu_f$ be its measure of maximal entropy on $\PP^1(\CC)$. 
\begin{enumerate}
\item The functions $\frac{1}{d^n} R_{F^n}$, viewed as functions on the unit ball $\BB\subseteq \RR^3$, converge locally uniformly to $h_{\mu_f} + C_F$ for an explicit constant $C_F$. 

 \item The sets $\Min(f^n)$, viewed as subsets of the unit ball $\BB\subseteq \RR^3$, converge in the Hausdorff topology to the conformal barycenter $\Bary(\mu_f)$ of the measure of maximal entropy. 
\end{enumerate}
\end{thm}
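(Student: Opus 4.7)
The plan is to deduce both statements from the Brolin--Lyubich equidistribution $\frac{1}{d^n}(f^n)^{\ast}\omega_{\CC} \to \mu_f$ weakly on $\PP^1(\CC)$; combined with the trivial bound $\|(f^n)_{\ast}\omega_{\CC}\| = 1$, this yields $\frac{1}{d^n}\omega_{f^n} \to \mu_f$ weakly. For part~(1), my first move is to anchor the convergence pointwise using the dynamical (escape-rate) Green function
\[
G_F(z) := \lim_{n\to\infty} \frac{1}{d^n}\log\|F^n z\|,
\]
which exists and is continuous on $\CC^2\setminus\{0\}$ with locally uniform convergence. Since $\log\|\gamma^{-1}w\|$ differs from $\log\|w\|$ by a function of $\gamma$ bounded independently of $n$, dominated convergence gives
\[
\frac{1}{d^n}R_{F^n}([\gamma]) \;\longrightarrow\; \int_{S_{\CC}} G_F(\gamma z)\, d\sigma_{\CC}
\]
pointwise on $\BB$.

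Next I would identify the limit with $h_{\mu_f}+C_F$ at the level of hyperbolic gradients. Applying Theorem~\ref{thm:minimizersintro} to the iterate $f^n$ yields
\[
\nabla_h\!\left(\tfrac{1}{d^n}R_{F^n}\right)\!(\xi) \;=\; \frac{1}{d^n}\int_{S^2}\zeta\,\widehat{\omega_{(f^n)^{\gamma_\xi}}}.
\]
Because $\gamma_\xi$-conjugation commutes with iteration and acts on $\PP^1(\CC)$ as a M\"obius transformation depending continuously on $\xi$, the weak-$\ast$ convergence $d^{-n}\omega_{f^n}\to\mu_f$ transports through to give $d^{-n}\widehat{\omega_{(f^n)^{\gamma_\xi}}}\to \widehat{\mu_f^{\gamma_\xi}}$, locally uniformly in $\xi$. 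The limit gradient is then $\int_{S^2}\zeta\,\widehat{\mu_f^{\gamma_\xi}}$, which by the Douady--Earle variational characterization of Section~\ref{sect:barycentersDE} is precisely $\nabla_h h_{\mu_f}(\xi)$. The total variations $d^{-n}\|\omega_{f^n}\| = d^{-n}(d^n+1)\leq 2$ are uniformly bounded, so the family $\{d^{-n}R_{F^n}\}$ is equi-Lipschitz on hyperbolic compacta; combined with the pointwise anchor, Arzel\`a--Ascoli (or integration along hyperbolic geodesics) upgrades this to local uniform convergence $d^{-n}R_{F^n} \to h_{\mu_f}+C_F$, with the explicit constant $C_F := \int_{S_\CC} G_F(z)\, d\sigma_\CC - h_{\mu_f}(0)$.

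For part~(2), I would appeal to two standard features of the Douady--Earle theory: $h_{\mu_f}$ is proper on $\BB$ (blowing up at $S^2$) and has the unique minimizer $\Bary(\mu_f)$. Admissibility of $\mu_f$ is automatic, since the measure of maximal entropy of a rational map of degree $\geq 2$ is atomless. Local uniform convergence from part~(1), together with properness of the limit, confines $\Min(f^n)$ to a fixed compact subset of $\BB$ for all large $n$; any Hausdorff subsequential limit of $\Min(f^n)$ must therefore lie in the minimum set of $h_{\mu_f}+C_F$, i.e.~equals $\{\Bary(\mu_f)\}$. The main obstacle in the argument is the gradient-level weak-$\ast$ passage in the middle paragraph: verifying that $d^{-n}\widehat{\omega_{(f^n)^{\gamma_\xi}}}\to \widehat{\mu_f^{\gamma_\xi}}$ uniformly in $\xi$ on hyperbolic compacta and matches the Douady--Earle gradient of $h_{\mu_f}$; once this identification is in place, the remainder is a routine compactness and equicontinuity exercise.
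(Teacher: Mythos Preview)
Your Part~(1) opens exactly as the paper does: the escape-rate function $G_F$ gives locally uniform convergence of $d^{-n}R_{F^n}$ to $\Gamma_F(\xi)=\int_{S_\CC} G_F(\gamma_\xi z)\,d\sigma_\CC$ (the paper's Lemma~\ref{lem:firstconvergence}). Where you diverge is in identifying $\Gamma_F$ with $h_{\mu_f}+C_F$. The paper does this by a Laplacian argument: it decomposes $\Gamma_F=\H\{g_F\}+\mathcal{I}$, shows $\Delta_h^{\BB_\CC}\Gamma_F=4=\Delta_h^{\BB_\CC}h_{\mu_f}$ (Proposition~\ref{prop:hyplapDE}), so that $\Gamma_F-h_{\mu_f}$ is hyperbolic-harmonic, and then computes its boundary values to be constant. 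Your route instead matches gradients, passing the Brolin--Lyubich limit through Theorem~\ref{thm:minimizersintro}. This is an attractive alternative, but note that Section~\ref{sect:barycentersDE} does \emph{not} contain the gradient formula $\nabla_h h_{\mu}(\xi)=\int_{S^2}\zeta\,d\widehat{\mu^{\gamma_\xi}}$ you invoke; you would need to prove it separately (it follows from the conformal naturality of the Poisson kernel, but this is a real computation, not a citation). You correctly flag this as the main obstacle.

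Part~(2) has a genuine gap. The sentence ``local uniform convergence from part~(1), together with properness of the limit, confines $\Min(f^n)$ to a fixed compact subset of $\BB$'' is false in general: local uniform convergence gives you no control near $\partial\BB$, and one can easily build sequences $g_n\to g$ locally uniformly with $g$ proper yet $\arg\min g_n\to\partial\BB$. What is needed is \emph{uniform} properness of the $d^{-n}R_{F^n}$ themselves, not of the limit. The paper supplies this via the growth estimate of Theorem~\ref{thm:growthrates} applied to $F^n$, together with the submultiplicativity $C_1(F^n)\geq C_1(F)^{(d^n-1)/(d-1)}$; this yields an $R$ independent of $n$ with $\Min(f^n)\subseteq B_R(0)$ (Proposition~\ref{prop:minsetsarebounded}, also Corollary~\ref{cor:levelbounded}). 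Once you insert that uniform confinement, your subsequential-limit argument is the same as the paper's and is correct.
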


The investigations carried out in this article were motivated by an analogous investigation of Rumely \cite{Ru1, Ru2} in the context of non-Archimedean dynamics. When $K$ is a complete, algebraically closed, non-Archimedean valued field and $f\in K(z)$, then the analogue of our function $R_F$ is Rumely's $\ord\Res_f$ (there is a canonical, arithmetically-motivated normalization of the lift $F$ in the non-Archimedean setting, see \cite{Ru1}); while we haven't precisely defined $R_F$ in the non-Archimedean context, this can be done, and the resulting function satisfies
\[
\ord\Res_f(\gamma) = -\log|\Res(F_0, F_1)| + 2dR_F(\gamma)\ ,
\] where $\Res(F_0, F_1)$ is the homogeneous resultant of the lift $F$. Thus, our Theorem~\ref{thm:basicprops} is re-captures parts of \cite{Ru1} Theorem 1.1 in the complex setting. The non-Archimedean version of Theorem~\ref{thm:asymptoticsoverC} is contained in \cite{KJ} Theorems 1 and 3. Throughout the paper, we will point out additional similarities between our $R_F$ and Rumely's $\ord\Res_f$. One part of the non-Archimedean picture that is lacking from our constructions is the crucial measure of a rational map (see \cite{Ru2}, Definition 9 and Corollary 6.5). It would be very interesting to develop an analogue of these measures in the Archimedean setting.

We remark that the non-Archimedean and Archimedean perspectives can be combined to give an invariant for rational maps defined over a global field $k$: by summing the min-invariant over all places of $k$ one obtains a function $\hat{h}_R:\mathcal{M}_d \to \RR$. It is natural to ask whether this is comparable to a Weil height. By work of Doyle-Jacobs-Rumely \cite{DJR}, the answer is yes when $d=2$ and $k$ is a function field; similar work of a VIGRE research group at the University of Georgia \cite{HMRVW} gives an affirmative answer if one restricts to polynomials under affine conjugation, again defined over function fields. The general question is currently under investigation.

The article is organized as follows: in Section~\ref{sect:bg} we provide the necessary background information for the remainder of the article. In Section~\ref{sect:bp} we establish basic properties of $R_F$, including explicit growth formulas for $R_F$ at the boundary of $\mathfrak{h}_K$ and an explicit expression for its hyperbolic Laplacian. Section~\ref{sect:asymp} discusses the asymptotic behavior of $R_{F^n}$ and $\Min(f^n)$. Finally, in Section~\ref{sect:done} we compute $\Min(f)$ and $m_K(f)$ explicitly for maps $f$ of degree 1.

\subsection*{Acknowledgements}
The author would like to thank Robert Rumely and Laura DeMarco for fruitful discussions and encouragement in carrying out this project.

\section{Background and Notation}\label{sect:bg}

\subsection{Fundamental Objects}

Let $K$ denote either the field $\RR$ or $\CC$. We endow $K^2$ with the Euclidean norm, $||(X,Y)|| = \sqrt{|X|^2 + |Y|^2}$. Under the usual left action of $\SL_2(K)$ on $K^2$ by left multiplication, the Euclidean norm is preserved by the subgroup
\[
\SU_2(K) = \left\{\begin{matrix} \SO(2)\ , & K = \RR \\[5pt] \SU(2) \ , & K= \CC\end{matrix}\right.\ .
\]
We endow $\PP^1(K)$ with a volume form $\omega_K$ as follows: first let $d\ell_K$ be given locally by
\[
d \ell_K = \left\{\begin{matrix} d\alpha \ , & K= \RR\\[5pt] \frac{i}{2} d\alpha\wedge d\overline{\alpha}\ , & K = \CC\end{matrix}\right.
\] be the top form cooresponding to the usual Lebesgue measure on $K$. Then $\omega_K$ is given locally by
\begin{equation}\label{eq:omegaKdefn}
\omega_K := \left(\frac{1}{1+|\alpha|^2}\right)^{[K:\RR]} \frac{d\ell_K}{\pi} = \left\{\begin{matrix}  \frac{1}{1+\alpha^2} \frac{d\alpha}{\pi} \ , & K= \RR\\[5pt] \frac{i}{2\pi} \frac{1}{(1+|\alpha|^2)^2}\ d\alpha\wedge d\overline{\alpha} \ , & K= \CC\end{matrix}\right.\ .
\end{equation} Note that when $K= \CC$, this is simply the Fubini-Study form. In both cases, $\omega_K$ has been normalized so that $\int_{\PP^1(K)} \omega_K = 1$. Viewing $\SL_2(K)$ as acting on $\PP^1(K)$ by fractional linear transformations, $\omega_K$ is invariant under pullback by elements of $\SU_2(K)$.

The chordal metric on $\PP^1(K)$ is given 
\[
||P,Q|| =\frac{|P-Q|}{\sqrt{1+|P|^2} \cdot \sqrt{1+|Q|^2}}
\] for $P, Q\in K$ and $||P, \infty|| = \frac{1}{\sqrt{1+|P|^2}}$. It is rotation invariant, in the sense that for $\gamma\in \SL_2(K)$ viewed as acting by linear fractional transformations on $\PP^1(K)$, we have $||\gamma(P), \gamma(Q)|| = ||P, Q||$.\\

Throughout this paper we consider rational maps $f\in K(z)$ of degree $d = \textrm{deg } f\geq 1$. A homogeneous lift $F$ of $f$ is a polynomial endomorphism $F: K^2 \to K^2$ given $F = (F_0, F_1)$ for coprime, homogeneous polynomials $F_i \in K[X,Y]$ satisfying $f(z) = \frac{F_0(z,1)}{F_1(z,1)}$. When needed, we will write the $F_i$ with coefficients as
\begin{align*}
F_0(X,Y) &= a_d X^d + ... + a_0 Y^d\\
F_1(X,Y) &= b_d X^d + ... + b_0 Y^d\ 
\end{align*}with $a_i, b_i \in K$. The left multiplication of $\SL_2(K)$ on $K^2$ induces a conjugation of $F$ by
\[
F\mapsto F^\gamma:(X,Y)= \gamma^{-1} \cdot F(\gamma\cdot (X,Y))
\] We write $F^\gamma = (F_0^\gamma, F_1^\gamma)$, where $F_0^\gamma, F_1^\gamma$ are the component polynomials of $F^\gamma$.

We define
\[
R(F) = \int_{S_K} \log ||F(X,Y)|| d\sigma_K\ ,
\] where $S_K \subseteq K^2$ is the unit sphere and $d\sigma_K$ is the unit volume form on $S_K$. This quantity can be interpreted geometrically and will be explored further in Section~\ref{sect:pcap}; of greater interest for our purposes is the function on $\SL_2(K)$ given
\[
R_F(\gamma):= R(F^\gamma)\ .
\] Note that the $\SU_2(K)$-invariance of $\sigma_K$ and $S_K$ implies that $R_F:\SL_2(K)\to \RR$ descends to a well-defined function $R_F:\SL_2(K)/\SU_2(K) \to \RR$. In the next section we will explain how $\SL_2(K)/\SU_2(K)$ is isometric to a real hyperbolic space, and will utilize the geometry of this space to deduce properties of $R_F$. It will also be useful to have an expression for $R_F$ as an integral on $\PP^1(K)$:
\begin{equation}\label{eq:affineequation}
R(F) = \int_{\PP^1(K)} \log \frac{||F(z,1)||}{||(z,1)||^d} \ \omega_K\ .\\
\end{equation}
Let $\hh_K:= \{(z,t)\ : \ z\in K, t>0\}$, and define $P_K(z,t)$ on $\hh_K$ by
\[
P_K(z,t) = \left(\frac{t}{t^2+|z|^2}\right)^{[K:\RR]}\ .
\] This function is the hyperbolic Poisson kernel, used to construct hyperbolic harmonic extensions of functions $g:\PP^1(K) \to \RR$ (see \cite{Sto}, Section 5.6, and also Proposition~\ref{prop:hyperbolicextensions} below). A direct calculation also shows that
\begin{equation}\label{eq:pushforwardomegaK}
(\gamma_{z,t}^{-1})^* \omega_K(\alpha) = P_K(\alpha - z, t) \frac{d\ell_K(\alpha)}{\pi}
\end{equation} where $\gamma_{z,t}(\alpha) = t\alpha + z$. Thus the change of variables formula gives
\[
\int_{\PP^1(K)} Q(\gamma_{z,t}(\alpha)) \ \omega_K(\alpha) = \int_{\PP^1(K)} Q(\alpha)\  (\gamma_{z,t})_* \omega_K(\alpha) = \int_{\PP^1(K)} Q(w) \cdot P_K(z,t; w) d\ell_K(w)\ 
\] for any smooth function $Q$ on $\PP^1(K)$.


\subsection{Models of Hyperbolic Space}\label{sect:spaces}

In the different sections of the paper it will be convenient to work with different models of real hyperbolic 3 space. Let $\mathbb{E}_K = \RR^{[K:\RR]+1}$. We identify three models by specifying a space $X$, a metric $\textrm{d}_X$, and a distinguished point in $X$ which we denote (in all models) by $j$:
\begin{itemize}
\item the upper half space model is the space $(\hh_K, \dH)$ consisting of points $(z,t)$ with $z\in K$ and $t>0$; the distinguished point is $j=(0,1)$.
\item the conformal ball model is the space $(\BB_K, \dB)$, consisting of points $\xi = r \zeta$, where $0\leq r < 1$ and $\zeta$ an element of the unit sphere in $S_K\subseteq \EE_K$; the distinguished point is $j=0$ is the zero vector in $\EE_K$.
\item the quotient space $(\SL_2(K)/\SU_2(K), \dSL)$, consisting of $\SU_2(K)$-equivalence classes of matrices in $\SL_2(K)$; here, the distinguished point is $j=[\textrm{id}]$, the equivalence class of the identity matrix. 
\end{itemize} The metrics $\dH$ and $\dB$ are the usual hyperbolic metrics on the respective model; see, e.g. \cite{Rat}. In particular, if $(x,t), (y,s)\in \hh_K$, then
\begin{equation}\label{eq:hypdist}
\cosh \dH((x,t), (y,s)) = 1 + \frac{ |x-y|^2 + (s-t)^2}{2 s t}\ .
\end{equation} If $x=y = 0$, this reduces\footnote{It is perhaps easier to see this reduced formula using the metric tensor on $\hh_K$ given by $ds_{\hh}^2 = \frac{ds_K^2+dt^2}{t^2}$, where $ds_K^2$ is the Euclidean metric on $K$.} to give $\dH((0,t), (0,s)) =  \left|\log\left(\frac{t}{s}\right)\right|$. The metric $\dSL$ on $\SL_2(K)/\SU_2(K)$ is perhaps less familiar, and will be defined below. 

We begin by recalling some standard decompositions of matrices $\gamma\in \SL_2(K)$. 
\begin{lemma} \label{lem:matrixdecomps}
Let $[\gamma]\in \SL_2(K)/\SU_2(K)$. 
\begin{enumerate}
\item[a)] The class $[\gamma]$ has a unique representative of the form $\gamma = \tau\cdot \eta_A $, where $\tau \in \SU_2(K)$ and $\eta_A = \left(\begin{matrix} e^{A/2} & 0 \\ 0 & e^{-A/2}\end{matrix}\right)$ for some $A\geq 0$.
\item[b)] The class $[\gamma]$ has a unique representative of the form
\begin{equation}\label{eq:halfspacerep}
\gamma_{z,t} = \left(\begin{matrix} \sqrt{t} & \frac{z}{\sqrt{t}} \\ 0 & \frac{1}{\sqrt{t}}\end{matrix}\right)\ ,
\end{equation}where $z\in K$ and $t>0$.
\end{enumerate}
\end{lemma}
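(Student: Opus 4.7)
For part (b), my approach is a direct algebraic computation. Given $\gamma = \twotwo{a}{b}{c}{d}\in\SL_2(K)$, I would solve for $(z,t)$ and $k\in\SU_2(K)$ in the equation $\gamma = \gamma_{z,t}\,k$. Writing $k = \gamma_{z,t}^{-1}\gamma$ and multiplying out gives
\[
k = \twotwo{(a-zc)/\sqrt{t}}{(b-zd)/\sqrt{t}}{\sqrt{t}\,c}{\sqrt{t}\,d}\ .
\]
Requiring the second row to have unit Euclidean (or Hermitian) norm forces $t = 1/(|c|^2+|d|^2)$; requiring the two rows to be orthogonal forces $z = (a\bar c + b\bar d)/(|c|^2+|d|^2)$ (which reduces to $(ac+bd)/(c^2+d^2)$ in the real case). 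Unit norm of the first row is then automatic from $\det\gamma = 1$. Both $(z,t)$ and $k$ are uniquely determined by $\gamma$, which yields existence and uniqueness of the representative $\gamma_{z,t}$ in the class $[\gamma]$.

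For part (a), I would appeal to the polar decomposition of matrices in $\SL_2(K)$: write $\gamma = UP$ with $P = (\gamma^*\gamma)^{1/2}$ positive Hermitian (resp.\ symmetric) and $U$ unitary (resp.\ orthogonal). From $\det P > 0$ and $\det\gamma = 1$ one deduces $\det U = \det P = 1$, so $U\in \SU_2(K)$ and $P\in\SL_2(K)$. Spectrally decomposing $P$ gives $P = V D V^{-1}$ with $V\in\SU_2(K)$ and $D = \mathrm{diag}(\lambda, \lambda^{-1})$ for some $\lambda > 0$. Conjugating by $\twotwo{0}{-1}{1}{0}\in\SU_2(K)$ if necessary to swap the diagonal entries of $D$, I may arrange $\lambda \geq 1$, so $\lambda = e^{A/2}$ for some $A \geq 0$. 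Then $\gamma = (UV)\,\eta_A\, V^{-1}$, and absorbing $V^{-1}\in \SU_2(K)$ into the coset yields $[\gamma] = [\tau\eta_A]$ with $\tau = UV\in\SU_2(K)$.

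For the uniqueness assertion in (a), I would observe that $A$ is determined by the coset: the eigenvalues of $\gamma^*\gamma$ are $e^A$ and $e^{-A}$, and if $\gamma' = \gamma k$ with $k\in\SU_2(K)$ then $\gamma'^*\gamma' = k^*(\gamma^*\gamma)k$ has the same spectrum, so $A = \cosh^{-1}\!\bigl(\tfrac{1}{2}\,\mathrm{tr}(\gamma^*\gamma)\bigr)$ is a well-defined coset invariant. The main subtlety I anticipate is that the factor $\tau$ is not literally unique: for $A = 0$ any $\tau\in\SU_2(K)$ represents the trivial class, and for $A > 0$ the factor $\tau$ is determined only up to right multiplication by the diagonal torus of $\SU_2(K)$, which stabilizes $\eta_A$. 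Hence I would read the claim as uniqueness of the invariant $A$ together with existence of a compatible $\tau$, and would spell this caveat out in the formal writeup.
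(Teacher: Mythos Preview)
Your argument for (b) is exactly the direct computation the paper has in mind (the paper says only ``this can be checked directly by hand''); your explicit formulas for $z$ and $t$ are correct, and the uniqueness argument is the same.

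For (a), your route via the polar decomposition $\gamma = UP$ followed by spectral diagonalization of the positive part $P$ is cleaner than what the paper writes. The paper invokes a decomposition $\gamma = \tau\eta\tau^*$ directly from Artin, which taken literally is the spectral theorem for Hermitian matrices and does not apply to an arbitrary $\gamma\in\SL_2(K)$; your version makes the missing step explicit by first extracting $P = (\gamma^*\gamma)^{1/2}$ before diagonalizing. The two approaches land on the same representative $\tau\eta_A$, but yours fills what is really a gap in the paper's sketch. (Incidentally, your swap matrix $\left(\begin{smallmatrix}0&-1\\1&0\end{smallmatrix}\right)$ lies in $\SU_2(K)$, whereas the paper's $\left(\begin{smallmatrix}0&1\\1&0\end{smallmatrix}\right)$ does not.)

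Your caveat about uniqueness of $\tau$ is correct and worth recording: for $A>0$ the diagonal torus of $\SU_2(K)$ commutes with $\eta_A$, so $\tau$ is determined only modulo that torus, and for $A=0$ not at all. The quantity $A$ itself, however, is a genuine coset invariant, exactly as you argue via the spectrum of $\gamma^*\gamma$. This is the only uniqueness the paper actually uses downstream (the metric $\dSL$ is defined purely in terms of $A$), so your reading of the claim is the right one.
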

\begin{proof}
\begin{itemize}
\item[a)] By \cite{Art}, Theorems 5.4 and 5.8 we can write $\gamma= \tau \eta \tau^*$ where $\tau^*$ is the conjugate transpose of $\tau\in \SU_2(K)$ and $\eta$ is a real diagonal matrix; perhaps replacing $\tau$ by $\tau \begin{pmatrix} 0 & 1 \\ 1 & 0\end{pmatrix}$, we find that $\eta = \eta_A$ for $A\geq 0$. 

\item[b)] This can be checked directly by hand; given a matrix $\gamma'\in \SL_2(K)$, it amounts to solving $\gamma' = \gamma_{z,t}\cdot \tau$ for some $z,t$ and $\tau\in \SU_2(K)$. Uniqueness follows by showing that if $\gamma_{z,t}\cdot \gamma_{z',t'}^{-1}\in \SU_2(K)$, then $z = z'$ and $t=t'$. 
\end{itemize}
\end{proof}

The quantity $A$ appearing in part (a) of Lemma~\ref{lem:matrixdecomps} can be used to define a metric on $\SL_2(K) / \SU_2(K)$:
\begin{defn}
The metric $\dSL$ on $\SL_2(K) / \SU_2(K)$ is defined $\dSL([\gamma], [\omega]) = A$, where $\tau \cdot \eta_A$ is the unique representative of $[\omega^{-1} \cdot \gamma]$ appearing in Lemma~\ref{lem:matrixdecomps} (a). 
\end{defn}

We will show in Proposition~\ref{prop:mapsandmetric} that this is indeed a metric on $\SL_2(K) / \SU_2(K)$. Note that the following invariance property is already clear: for $\alpha, \gamma, \omega\in \SL_2(K)$, we have $\dSL([\alpha\cdot \gamma], [\alpha \cdot \omega]) = \dSL([\gamma], [\omega])$. 

We next recall an isometry between $(\hh_K, \dH)$ and $(\BB_K, \dB)$:
\begin{prop}\label{prop:usualhypisoms}
The spaces $(\hh_K, \dH)$ and $(\BB_K, \dB)$ are isometric. Fix coordinates on the appropriate Euclidean space (depending on $K$) so that $\hh_K = \{(z,t)\ : \ z\in K, t>0\}$ and $\BB_K = \{ r\cdot \zeta\ : \ 0 \leq r < 1\ , \zeta\in S_K\}$. Let $\sigma_K$ denote inversion in the sphere $S_K((0,1), \sqrt{2})$ and let $\eta_K$ denote reflection in the hyperplane $\{(x, 0)\ : x\in K\} \subseteq \EE_K$. Then the map
\[
\iota_K : \BB_K \to \hh_K
\] given $\iota_K = \eta_K\circ \sigma_K$ defines an isometry between $(\BB_K, \dB)$ and $(\hh_K, \dH)$. Moreover, this inversion extends to give stereographic projection $\Sigma: S_K \to \PP^1(K)$ on the boundaries of $\BB_K$ and $\hh_K$. 
\end{prop}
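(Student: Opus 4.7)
The plan is to work directly with the explicit formulas for $\sigma_K$ and $\iota_K = \eta_K \circ \sigma_K$ and verify both the bijectivity $\BB_K \to \hh_K$ and the isometry property via a standard conformal-factor computation; this is the classical approach to relating the ball and upper half space models of hyperbolic space.

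First I would write the inversion explicitly: for $v = (x, t) \in \EE_K \setminus \{(0,1)\}$,
\[
\sigma_K(v) = (0,1) + \frac{2(v - (0,1))}{|v - (0,1)|^2}\ .
\]
Composing with $\eta_K$ (which negates the last coordinate), a short computation gives, for any $v = (x,t)$,
\[
\iota_K(v) = \left(\frac{2x}{|v-(0,1)|^2},\ \frac{1-|v|^2}{|v-(0,1)|^2}\right)\ .
\]
The $t$-coordinate of $\iota_K(v)$ is strictly positive exactly when $|v|<1$, so $\iota_K(\BB_K) \subseteq \hh_K$. Since $\sigma_K$ and $\eta_K$ are each involutions, the inverse $\iota_K^{-1} = \sigma_K \circ \eta_K$ exists globally; a parallel computation shows it carries $\hh_K$ into $\BB_K$, giving bijectivity. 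A spot check at $v = 0$ yields $\iota_K(0) = (0,1)$, confirming that distinguished points map to distinguished points.

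For the isometry I would invoke the well-known fact that inversion $\sigma$ in a sphere of radius $r$ centered at $c$ is conformal, with $|D\sigma(v)\cdot w| = (r^2/|v-c|^2)|w|$. Applied with $r^2 = 2$ and $c = (0,1)$, and using that $\eta_K$ is a Euclidean isometry, this yields
\[
\iota_K^* |dw|^2 = \frac{4}{|v-(0,1)|^4}|dv|^2\ .
\]
Dividing by the square of the $t$-coordinate $\tilde t = (1-|v|^2)/|v-(0,1)|^2$ of $\iota_K(v)$ computed above gives
\[
\iota_K^* \left(\frac{|dw|^2}{\tilde t^{\,2}}\right) = \frac{|v-(0,1)|^4}{(1-|v|^2)^2} \cdot \frac{4}{|v-(0,1)|^4}|dv|^2 = \frac{4|dv|^2}{(1-|v|^2)^2}\ ,
\]
which is precisely the ball-model metric $ds^2_\BB$, establishing that $\iota_K$ is an isometry between the indicated Riemannian structures.

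For the boundary statement I would specialize $\iota_K$ to $v = (x,t) \in S_K$, so that $|v|^2 = 1$. The identity $|v-(0,1)|^2 = 2(1-t)$ collapses the formula for $\iota_K$ to $(x,t) \mapsto (x/(1-t),\ 0)$, which is stereographic projection from the north pole $(0,1)$ onto the hyperplane $\{t=0\} \subseteq \partial \hh_K$; the excluded point $(0,1)$ sends to $\infty$, giving the extension $\Sigma: S_K \to \PP^1(K)$. The main obstacle is really just careful coordinate bookkeeping and sign tracking, in particular ensuring the image lands in the upper rather than the lower half space, which is why the composition with $\eta_K$ is needed; the metric identification itself is essentially forced by the classical conformal-factor formula once the correct sphere of inversion has been specified.
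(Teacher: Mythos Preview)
Your proof is correct. The paper itself does not actually prove this proposition; it simply writes ``See, e.g., \cite{Rat} Sections 4.5 and 4.6'' and moves on. What you have written is precisely the classical argument one finds in Ratcliffe: compute $\iota_K$ explicitly, check the image lands in $\hh_K$, verify the pullback of metrics via the conformal factor for sphere inversion, and specialize to the boundary to recover stereographic projection. So your approach is not different from the paper's---it is the content the paper outsources to its reference, spelled out in full.
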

\begin{proof}
See, e.g, \cite{Rat} Sections 4.5 and 4.6.
\end{proof}

There is a natural action of $\SL_2(K)$ on $\hh_K$ that preserves the metric $\dH$, given by the action of fractional linear transformations (\cite{Rat}, Theorem 4.6.2). For our purposes, the action for the matrices given in Lemma~\ref{lem:matrixdecomps} are 
\begin{align*}
\gamma_{z,t} \cdot (0,1) = (z,t)\\
\eta_A \cdot (0,1) = (0, e^A)\ .
\end{align*}

By identifying $\hh_K$ to $\BB_K$ via the map $\iota_K$ appearing in Proposition~\ref{prop:usualhypisoms} we also obtain an action of $\SL_2(K)$ on $\BB_K$. There is also a natural action of $\SL_2(K)$ on $\SL_2(K) / \SU_2(K)$ given by left multiplication. In all cases, we find that $\SU_2(K)$ is the stabilizer subgroup of the distinguished point $j$ in each model. 

We now identify isometries between $\SL_2(K)/ \SU_2(K)$ and the other models of hyperbolic space discussed above:

\begin{prop}\label{prop:mapsandmetric}
The maps
\begin{align*}
\Gamma_{\hh}&: \SL_2(K) / \SU_2(K) \to \hh_K\\
\Gamma_{\BB}&: \SL_2(K)/ \SU_2(K) \to \BB_K
\end{align*}
defined by $\Gamma_{\hh}([\gamma]) = \gamma \cdot (0,1)$ and $\Gamma_{\BB}([\gamma]) = \gamma \cdot 0$ are bijections, and the metric on $\SL_2(K)/ \SU_2(K)$ induced by either of these maps coincides with $\dSL$ introduced above. 
\end{prop}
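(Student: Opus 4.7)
The plan is to exploit the observation---noted just before the statement---that $\SU_2(K)$ is precisely the $\SL_2(K)$-stabilizer of the distinguished point in each of the three models, combined with the matrix decompositions from Lemma~\ref{lem:matrixdecomps} and the isometry $\iota_K$ already supplied by Proposition~\ref{prop:usualhypisoms}.

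First I would show $\Gamma_{\hh}$ is a well-defined bijection. Well-definedness is immediate since any $\tau\in \SU_2(K)$ fixes $(0,1)$, so $\gamma\tau\cdot (0,1) = \gamma\cdot(0,1)$. Surjectivity is witnessed by the family $\gamma_{z,t}$ from Lemma~\ref{lem:matrixdecomps}(b): the identity $\gamma_{z,t}\cdot(0,1) = (z,t)$ recorded just above exhausts $\hh_K$. Injectivity follows because $\gamma\cdot(0,1) = \gamma'\cdot(0,1)$ forces $(\gamma')^{-1}\gamma\in \Stab((0,1)) = \SU_2(K)$.

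Second I would verify that $\dSL$ agrees with the pullback of $\dH$ under $\Gamma_{\hh}$. Using the $\SL_2(K)$-invariance of $\dH$ together with the left-invariance of $\dSL$ remarked after its definition, both distances reduce to the case $[\omega] = [\textrm{id}]$. Writing the unique representative $\gamma = \tau \eta_A$ supplied by Lemma~\ref{lem:matrixdecomps}(a), one has $\dSL([\textrm{id}],[\gamma]) = A$ by definition. On the other hand, $\Gamma_{\hh}([\gamma]) = \tau\eta_A\cdot(0,1) = \tau\cdot(0,e^A)$, and because $\tau\in \SU_2(K)$ is a $\dH$-isometry fixing $(0,1)$,
\[
\dH\bigl((0,1),\, \tau\cdot(0,e^A)\bigr) \;=\; \dH\bigl((0,1),(0,e^A)\bigr) \;=\; A,
\]
the last equality coming from specializing~\eqref{eq:hypdist} at $x=y=0$. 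This shows the two metrics agree on all of $\SL_2(K)/\SU_2(K)$, and in particular confirms that $\dSL$ is genuinely a metric.

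Finally, for $\Gamma_{\BB}$ I would transport the conclusions along the isometry $\iota_K$. The $\SL_2(K)$-action on $\BB_K$ is \emph{defined} by $\gamma\cdot \xi := \iota_K^{-1}(\gamma \cdot \iota_K(\xi))$, and because $\iota_K$ interchanges the distinguished points ($\iota_K(0) = (0,1)$) we get the factorization $\Gamma_{\hh} = \iota_K\circ \Gamma_{\BB}$; bijectivity of $\Gamma_{\BB}$ and the identification of its pullback metric with $\dSL$ are then automatic. The only step needing real care is handling the two-sided decomposition in Lemma~\ref{lem:matrixdecomps}(a), which gives $\gamma = \tau\eta_A\tau^*$: one must recognize that $\tau\eta_A$ is the correct coset representative (the right-hand $\tau^*\in \SU_2(K)$ being absorbed) before the action on $(0,1)$ can be computed cleanly. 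Once this bookkeeping is in place the rest is a direct computation.
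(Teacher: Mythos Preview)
Your proposal is correct and follows essentially the same route as the paper: bijectivity from the stabilizer observation, reduction to $[\omega]=[\textrm{id}]$ via the matching left-invariances of $\dH$ and $\dSL$, and the computation $\dH(\tau\eta_A\cdot(0,1),(0,1))=\dH((0,e^A),(0,1))=A$ using the decomposition of Lemma~\ref{lem:matrixdecomps}(a). Your treatment is in fact slightly more detailed---you spell out well-definedness and surjectivity explicitly and flag the absorption of the right $\tau^*$ factor---but the argument is the same.
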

\begin{proof}
The fact that these maps are bijections follows from the fact that $\SU_2(K)$ is the stabilizer subgroup of the respective distinguished points $j$. To see that the metric induced by these maps agrees with $\dSL$, it's enough to show that the metric induced by $\dH$ agrees with $\dSL$ (since the action of $\SL_2(K)$ on $\BB_K$ was defined by the isometry $\iota_K$ of $\hh_K$ and $\BB_K$). The metric $\dH$ is known to be invariant under the left action of $\SL_2(K)$, and we also saw that $\dSL$ is invariant under the action of $\SL_2(K)$ on $\SL_2(K)/ \SU_2(K)$. Therefore, we only need to check that $\dSL([\gamma], [\textrm{id}]) = \dH(\gamma\cdot(0,1), (0,1))$ for a given $[\gamma]\in \SL_2(K)/ \SU_2(K)$. 

Let $\tau\cdot \eta_A$ be the unique representative of $[\gamma]$ given in Lemma~\ref{lem:matrixdecomps}. Then $\dSL([\gamma], [\textrm{id}]) = A$. We also compute
\begin{align*}
\dH(\gamma \cdot (0,1), (0,1))& = \dH((\tau\cdot \eta_A) \cdot (0,1), (0,1))\\
& = \dH(\eta_A \cdot (0,1), \tau^{-1}\cdot (0,1))\\
& = \dH(\eta_A \cdot (0,1), (0,1))\ .
\end{align*} Above we noted that $\eta_A$ sends $(0,1)$ to $(0, e^{A})$, and from the formula for $\dH$ mentioned above we find that
\[
\dH(\gamma.(0,1), (0,1)) = \dH((0, e^A), (0,1)) = \log(e^A) - \log (1) = A = \dSL([\gamma], [\textrm{id}])\ .
\]
\end{proof}

We close this section with some remarks and conventions that will be used in the rest of the paper:
\begin{itemize}
\item If $f: \SL_2(K)/\SU_2(K) \to \RR$ is a function on $\SL_2(K) / \SU_2(K)$, then we will often write $f(\gamma(j)) = f([\gamma])$ for the corresponding function on $\hh_K$. In a similar way, the function on $\BB_K$ will often be written $f(\gamma(0)) = f([\gamma])$. In all cases, the function name, $f$, will be the same, and we will rely on its argument to determine the domain.

\item Functions and measures defined on $\PP^1(K)$ or $\hh_K$ can be pulled back along stereographic projection to be defined on $S^2$ or $\BB_K$; we will denote these pullbacks by putting a hat $\ \widehat{\cdot}\ $ over the object. For example, if $\mu$ is a probability measure on $\PP^1(K)$ then $\widehat{\mu}= (\iota_K)^* \mu$, where $\iota_K$ is stereographic projection (see Proposition~\ref{prop:usualhypisoms}).
\end{itemize}

\subsection{Potential Theory on $\PP^1(\CC)$}

In the case $K=\CC$, we will make much use of techinques from potential theory. Here, we recall a few facts that we will need.

Viewing points of $\PP^1(\CC)$ in homogeneous coordinates $[x_0 : x_1]$, the patch $U_1 = \{ [x_0:x_1] \ : \ x_1 \neq 0\}$ will be given local coordinates $z = \frac{x_0}{x_1}$ and $\zbar$. The operator $dd^c$ in these coordinates is defined on $\mathcal{C}^2$ functions by
\[
dd^c \psi := \frac{i}{\pi}\psi_{z\overline{z}}\ dz \wedge d\overline{z}\ .
\] We will make use of the following well-known facts pertaining to $dd^c$:

\begin{prop}\label{prop:ddcprops}
Let $\psi:\CC \to \RR$ be $\mathcal{C}^2$. Then
\begin{enumerate}
\item For every $f:\CC\to\CC$ holomorphic, we have
\[
f^* dd^c \psi = dd^c(\psi\circ f)\ .
\]

\item For every $\mathcal{C}^2$ function $\phi:\CC\to \RR$, we have
\[
\int_{\PP^1(\CC)} \phi\ dd^c\psi = \int_{\PP^1(\CC)} \psi\ dd^c \phi\ .
\]
\end{enumerate}
\end{prop}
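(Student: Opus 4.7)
\textbf{Proof proposal for Proposition~\ref{prop:ddcprops}.}

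For part (1), the plan is a direct local calculation using the chain rule and the holomorphy of $f$. Write $w = f(z)$. Because $f$ is holomorphic, $\partial_{\bar z} f = 0$ and $\partial_{\bar z} f' = 0$, so applying the chain rule twice gives
\[
(\psi\circ f)_z = \psi_w(f(z))\, f'(z),\qquad (\psi\circ f)_{z\bar z} = \psi_{w\bar w}(f(z))\, |f'(z)|^2.
\]
Thus $dd^c(\psi\circ f) = \tfrac{i}{\pi}\,\psi_{w\bar w}(f(z))\,|f'(z)|^2\, dz\wedge d\bar z$. On the other hand, $f^*(dw\wedge d\bar w) = f'(z)\overline{f'(z)}\, dz\wedge d\bar z = |f'(z)|^2\, dz\wedge d\bar z$, so
\[
f^*\bigl(\tfrac{i}{\pi}\psi_{w\bar w}\, dw\wedge d\bar w\bigr) = \tfrac{i}{\pi}\,\psi_{w\bar w}(f(z))\,|f'(z)|^2\, dz\wedge d\bar z,
\]
which matches. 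This part is routine; it also ensures $dd^c\psi$ is a well-defined $(1,1)$-form on $\PP^1(\CC)$ by applying the identity to the transition map $w=1/z$.

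For part (2), the plan is to apply Stokes' theorem on the compact boundaryless surface $\PP^1(\CC)$ after first establishing the pointwise identity
\[
\phi\, dd^c\psi - \psi\, dd^c\phi = d\bigl(\phi\, d^c\psi - \psi\, d^c\phi\bigr),
\]
where $d^c := \tfrac{i}{2\pi}(\bar\partial - \partial)$ so that $dd^c$ agrees with the definition given in the paper. Expanding $d(\phi\, d^c\psi) = d\phi\wedge d^c\psi + \phi\, dd^c\psi$ and similarly for $\psi$, the identity reduces to checking the symmetry $d\phi\wedge d^c\psi = d\psi \wedge d^c\phi$. A short local computation gives
\[
d\phi\wedge d^c\psi = \tfrac{i}{2\pi}\bigl(\phi_z \psi_{\bar z} + \phi_{\bar z}\psi_z\bigr)\, dz\wedge d\bar z,
\]
which is manifestly symmetric in $\phi$ and $\psi$.

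Having established the identity, I would conclude by integrating over $\PP^1(\CC)$. One clean route: cover $\PP^1(\CC)$ by the standard affine charts $U_0 = \{x_0\neq 0\}$ and $U_1 = \{x_1\neq 0\}$, choose a small disk $D_r = \{|z|\le r\}$ around $0\in U_1$ with $r$ large, apply Stokes' theorem to the one-form $\phi\, d^c\psi - \psi\, d^c\phi$ on $D_r$, and show the boundary integral over $\partial D_r$ tends to $0$ as $r\to\infty$ (equivalently, the corresponding disk around $\infty$ shrinks to a point, and smoothness of $\phi,\psi$ on all of $\PP^1(\CC)$ forces $d^c\psi$, $d^c\phi$ to be bounded in the chart around $\infty$). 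Alternatively, use a partition of unity to split $\phi = \phi_0 + \phi_1$ subordinate to $\{U_0, U_1\}$ and apply Stokes on each chart; the boundary contributions from the overlap cancel by construction.

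The main obstacle, modest as it is, is the bookkeeping at $\infty$: one must verify that the one-form $\phi\, d^c\psi - \psi\, d^c\phi$ extends smoothly across $\infty\in\PP^1(\CC)$, which is where part (1) is invoked to justify that $d^c$ is coordinate-independent. Once this is in hand, Stokes' theorem yields $\int_{\PP^1(\CC)} d(\phi\, d^c\psi - \psi\, d^c\phi) = 0$, and combining with the pointwise identity proves symmetry.
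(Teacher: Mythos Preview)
The paper does not actually prove this proposition; it is stated as well-known background in the potential-theory subsection and used freely thereafter. Your proof is correct and is the standard argument: part (1) via the chain rule using holomorphy of $f$, and part (2) via the exactness identity $\phi\,dd^c\psi - \psi\,dd^c\phi = d(\phi\,d^c\psi - \psi\,d^c\phi)$ together with Stokes' theorem on the closed surface $\PP^1(\CC)$. Your attention to the behavior at $\infty$ is appropriate given that the statement is phrased for $\psi:\CC\to\RR$ while the integral is over $\PP^1(\CC)$; in the paper's applications the functions involved extend smoothly across $\infty$, so the one-form $\phi\,d^c\psi - \psi\,d^c\phi$ is globally smooth and Stokes applies directly without the limiting argument.
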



\subsection{Expressing the Function $R_F$}\label{sect:expressions}
In this section, we collect several for $R_F$ that will be useful throughout the paper. The first computes $R_F([\textrm{id}]) = R(F)$ when $F$ is a linear map:
\begin{lemma}\label{lem:simplestcalculationRF}

If $M\in \SL_2(K)$ with $M = \tau \cdot \eta_A \cdot \sigma$ for $\tau, \sigma\in \SU_2(K)$ and $\eta_A$ as in Lemma~\ref{lem:matrixdecomps}(a), then
\[
\int_{S_K} \log ||M\cdot (X,Y)^\top ||\ d\sigma_K = \left\{\begin{matrix} \log(1+e^A) - \frac{A}{2} - \log 2\ , & K = \RR\\[5pt] -\frac{1}{2} + \frac{A}{2} \left(\frac{e^{2A} +1}{e^{2A}-1}\right)\ , & K = \CC \end{matrix}\right.\ .
\]
\end{lemma}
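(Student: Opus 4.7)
The plan is to first collapse $M$ to a diagonal matrix using the $\SU_2(K)$-invariance of the integrand, and then to evaluate the resulting one-parameter integral separately in the two cases.

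First I would observe that because $\tau \in \SU_2(K)$ preserves the Euclidean norm, $\|M v\| = \|\eta_A \sigma v\|$ for all $v \in K^2$; and because $\sigma \in \SU_2(K)$ preserves the sphere $S_K$ and the normalized volume form $d\sigma_K$, the change of variables $v \mapsto \sigma^{-1} v$ reduces the problem to showing
\[
\int_{S_K} \log \|\eta_A \cdot (X,Y)^\top\|\, d\sigma_K \;=\; \tfrac{1}{2}\int_{S_K} \log\bigl(e^{A}|X|^2 + e^{-A}|Y|^2\bigr)\, d\sigma_K,
\]
and comparing this to the claimed right-hand side.

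For $K=\RR$, I would parametrize $S^1$ by $(X,Y) = (\cos\theta, \sin\theta)$ with $d\sigma_\RR = d\theta/(2\pi)$, and invoke the classical formula
\[
\frac{1}{2\pi}\int_0^{2\pi} \log\bigl(a^2\cos^2\theta + b^2\sin^2\theta\bigr)\, d\theta \;=\; 2\log\!\left(\frac{a+b}{2}\right),\qquad a,b>0,
\]
with $a = e^{A/2}$ and $b = e^{-A/2}$. This yields $\log\cosh(A/2)$, which rewrites as $\log(1+e^{A}) - A/2 - \log 2$.

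For $K = \CC$, I would parametrize $S^3\subseteq \CC^2$ in Hopf-type coordinates $(X,Y) = (e^{i\phi}\cos\theta, e^{i\psi}\sin\theta)$ with $\theta\in[0,\pi/2]$ and $\phi,\psi\in[0,2\pi)$, under which $d\sigma_\CC = \frac{1}{2\pi^2}\sin\theta\cos\theta\, d\theta\, d\phi\, d\psi$. Since the integrand depends only on $\theta$, integrating out $\phi,\psi$ reduces the problem to
\[
\tfrac{1}{2}\int_0^{\pi/2} \log\bigl(e^{A}\cos^2\theta + e^{-A}\sin^2\theta\bigr)\cdot 2\sin\theta\cos\theta\, d\theta.
\]
After the substitution $u=\cos^2\theta$ (and then $v = (e^A - e^{-A})u + e^{-A}$), this collapses to $\frac{1}{2(e^A - e^{-A})}\int_{e^{-A}}^{e^{A}} \log v\, dv$, which an elementary antiderivative computation turns into $\frac{A}{2}\cdot\frac{e^{2A}+1}{e^{2A}-1} - \frac{1}{2}$.

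The only mildly subtle step is the normalization of the volume form on $S^3$ in the Hopf coordinates above; everything else is a routine reduction by $\SU_2(K)$-invariance followed by one-variable calculus. There is no real obstacle.
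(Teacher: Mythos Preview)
Your proposal is correct and follows essentially the same strategy as the paper: reduce to $\eta_A$ via $\SU_2(K)$-invariance, then compute the resulting one-parameter integral explicitly. The only difference is coordinate choice---the paper passes to $\PP^1(K)$ and integrates against $\omega_K$ (leaving the final step as an ``explicit but tedious'' calculation), whereas you parametrize $S_K$ directly (angle on $S^1$, Hopf coordinates on $S^3$) and actually carry the computation through; your route is if anything cleaner and more self-contained.
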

\begin{proof}
Note that the $\SU_2(K)$- invariance of $S_K$ and $d\sigma_K$ allows us to reduce our calculations to
\[
\int_{S_K} \log ||M\cdot (X,Y)^\top||\ d\sigma_K = \int_{S_K} \log ||\eta_A \cdot (X,Y)^\top||\ d\sigma_K = \frac{1}{2} \int_{S_K} \log \left(e^A |X|^2 + e^{-A}|Y|^2\right)\ d\sigma_K\ .
\] Expressing this as an integral over $\PP^1(K)$ gives
\begin{align*}
\int_{S_K} \log ||M\cdot (X,Y)^\top || d\sigma_K &= \frac{1}{2}\int_{\PP^1(K)} \log \left(\frac{e^A |\alpha|^2 + 1}{|\alpha|^2+1}\right) \omega_K(\alpha) - \frac{A}{2}\\
& = \frac{1}{2} \int_{\PP^1(K)} \log (e^{2A} |\alpha|^2 + 1) \omega_K(\alpha) - \frac{1}{2} \int_{\PP^1(K)}\log (|\alpha|^2 + 1) \omega_K(\alpha) - \frac{A}{2}\ .
\end{align*}

An explicit -- but tedious -- calculation of these integrals in local coordinates gives the expressions asserted in the statement of the lemma.
\end{proof}

We next give a slight refinement of the affine expression for $R_F$ given in (\ref{eq:affineequation}):
\begin{prop}\label{prop:RFLandP}
Let $f\in K(z)$ and $\gamma\in \SL_2(K)$ be represented by $\gamma_{z,t}$ as in Lemma~\ref{lem:matrixdecomps}. Then
\[
R_F([\gamma]) = \frac{1}{2}\int_{\PP^1(K)} \log \frac{|f_1(w)|^2 \cdot \left( |f(w) - z|^2 + t^2\right)}{t^{d+1}} P_K(\alpha-z, t) \frac{d\ell_K}{\pi} +c_K\ ,
\] where $c_K = -\frac{d}{2} \int_{\PP^1(K)} \log (1+|\alpha|^2) \omega_K(\alpha) = \left\{\begin{matrix} -d\log 2\ , & K= \RR\\ -\frac{d}{2}\ , & K=\CC\end{matrix}\right.$. 
\end{prop}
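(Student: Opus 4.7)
The plan is to apply the affine expression~(\ref{eq:affineequation}) to $F^\gamma$, carry out the matrix algebra to rewrite $F^\gamma(\alpha,1)$ in terms of $w=\gamma_{z,t}(\alpha)$, and then change variables in the resulting integral using the pushforward identity~(\ref{eq:pushforwardomegaK}).

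First I would apply~(\ref{eq:affineequation}) to $F^\gamma$ and split the logarithm, obtaining
\[
R_F([\gamma]) \;=\; \int_{\PP^1(K)} \log\|F^\gamma(\alpha,1)\|\,\omega_K(\alpha) \;-\; \frac{d}{2}\int_{\PP^1(K)} \log(1+|\alpha|^2)\,\omega_K(\alpha).
\]
The second term is exactly $c_K$. Its explicit evaluation is a standard exercise (substitution $\alpha=\tan\theta$ when $K=\RR$, polar coordinates when $K=\CC$), yielding the asserted values $-d\log 2$ and $-d/2$.

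For the main integral, I would unpack $F^\gamma(\alpha,1) = \gamma_{z,t}^{-1}\,F(\gamma_{z,t}\cdot(\alpha,1))$ using the explicit form of $\gamma_{z,t}$ from Lemma~\ref{lem:matrixdecomps}(b). Since $\gamma_{z,t}\cdot(\alpha,1) = t^{-1/2}(t\alpha+z,\,1)$ and $F=(F_0,F_1)$ is homogeneous of degree $d$,
\[
F(\gamma_{z,t}\cdot(\alpha,1)) \;=\; t^{-d/2}\bigl(f_0(w),\,f_1(w)\bigr)^{\top} \;=\; t^{-d/2}\,f_1(w)\bigl(f(w),\,1\bigr)^{\top},\qquad w := t\alpha+z,
\]
where $f_i(w):=F_i(w,1)$ and $f_0/f_1=f$. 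Applying
\[
\gamma_{z,t}^{-1} \;=\; \twotwo{1/\sqrt{t}}{-z/\sqrt{t}}{0}{\sqrt{t}}
\]
to this column and computing the squared Euclidean norm gives
\[
\|F^\gamma(\alpha,1)\|^2 \;=\; \frac{|f_1(w)|^2}{t^{d+1}}\bigl(|f(w)-z|^2 + t^2\bigr),
\]
so that $\log\|F^\gamma(\alpha,1)\|$ is exactly one half of the logarithm appearing in the statement, evaluated at $w=\gamma_{z,t}(\alpha)$.

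The last step is the change of variables $\alpha\mapsto w=\gamma_{z,t}(\alpha)$, which by~(\ref{eq:pushforwardomegaK}) transports $\omega_K(\alpha)$ to $(\gamma_{z,t})_*\omega_K(w) = P_K(w-z,t)\,d\ell_K(w)/\pi$, yielding the displayed formula. The only delicate point is the bookkeeping in the homogeneous-lift computation, where one must track the factors of $\sqrt{t}$ and the action of $\gamma_{z,t}^{-1}$ carefully; the remaining integration and change-of-variables step are routine.
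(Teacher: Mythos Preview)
Your proof is correct and follows essentially the same route as the paper's: both start from the affine expression~(\ref{eq:affineequation}), expand $\|F^\gamma(\alpha,1)\|$ via the explicit matrix form of $\gamma_{z,t}$ to obtain the integrand in equation~(\ref{eq:simplifiedintegrand}), separate off the $-\frac{d}{2}\log(1+|\alpha|^2)$ term as $c_K$, and then push forward along $\gamma_{z,t}$ using~(\ref{eq:pushforwardomegaK}). Your write-up is in fact a bit more explicit about the intermediate matrix algebra (tracking the $t^{-1/2}$ and $t^{-d/2}$ factors separately), but the argument is the same.
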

\begin{proof}
Putting the integrand of $R_F$ into affine coordinates yields
\begin{align}
\log \frac{||F^\gamma(X,Y)||}{||X,Y||^d} & = \frac{1}{2} \log \frac{t^{-(d+1)} \left( \left| F_0(t\alpha+z,1) - zF_1(t\alpha+z,1)\right|^2 + t^2 |F_1(t\alpha+z,1)|^2\right)}{ (|\alpha|^2+1)^d}\nonumber\\
& = \frac{1}{2} \log \frac{|f_1(\gamma(\alpha))|^2 \cdot\left( |f({\gamma}(\alpha)) - z|^2 + t^2\right)}{t^{d+1}}-\frac{d}{2} \log (1+|\alpha|^2)\ .\label{eq:simplifiedintegrand}
\end{align} Integrating the second term against $\omega_K$ gives, after a direct calculation, the constant $c_K$. Integrating the first term against $\omega_K$ and using (\ref{eq:pushforwardomegaK}) then gives the asserted expression.
\end{proof}

When $K=\CC$, we can use the complex structure to deduce the following useful expression for $R_F$:
\begin{prop}\label{eq:formulaforgradient}
Let $f\in \CC(z)$, and let $[\gamma]\in \SL_2(\CC)/ \SU_2(\CC)$ be represented by $\gamma_{z,t}$ as in Lemma~\ref{lem:matrixdecomps}. Define $\psi(\alpha; z,t) =-\log||\gamma_{z,t}^{-1}(\alpha), \infty||$ and $\omega_{z,t} = (\gamma_{z,t}^{-1})^* \omega_\CC$. 
\begin{equation}\label{eq:simplifiedRF}
R_F([\gamma])= \frac{d-1}{2} \log t + \int_{\PP^1(\CC)} \psi(\alpha; z,t) f^*  \omega_{z,t} - \frac{d}{2} + \frac{1}{2} \log c(z,t;f)\ ,
\end{equation} where $c(z,t;f) =|a_d - zb_d|^2 + t^2 |b_d|^2$.
\end{prop}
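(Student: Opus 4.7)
The plan is to bootstrap from Proposition~\ref{prop:RFLandP}. Using the identity $\tfrac{1}{2}\log(|f(w)-z|^2+t^2)=\psi(f(w);z,t)+\log t$ to expand the integrand there gives
\[
R_F([\gamma]) = \int_{\PP^1(\CC)}\log|f_1(w)|\,\omega_{z,t}(w) + \int_{\PP^1(\CC)} \psi(f(w);z,t)\,\omega_{z,t}(w) - \tfrac{d-1}{2}\log t - \tfrac{d}{2},
\]
so the task reduces to showing that the sum of the first two integrals equals $(d-1)\log t + \tfrac{1}{2}\log c(z,t;f) + \int_{\PP^1(\CC)}\psi(\alpha;z,t)\,f^*\omega_{z,t}(\alpha)$.

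The key observation is that $\omega_{z,t}=dd^c\psi(\cdot;z,t)$ on $\CC$, checked by direct calculation from the explicit formula $\psi(\alpha;z,t)=\tfrac{1}{2}\log\tfrac{t^2+|\alpha-z|^2}{t^2}$. Because $\psi(\cdot;z,t)$ has a logarithmic pole at $\infty$ (in the chart $v=1/\alpha$, one has $\psi=-\log|v|+O(1)$), the global current identity on $\PP^1(\CC)$ is $dd^c\psi(\cdot;z,t)=\omega_{z,t}-\delta_\infty$. Applying Proposition~\ref{prop:ddcprops}(1) to the holomorphic map $f$ then yields $f^*\omega_{z,t}=dd^c(\psi\circ f)+f^*\delta_\infty$. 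In the generic case $b_d\neq 0$, the singular loci of $\psi(\cdot;z,t)$ and of $\psi\circ f$ are disjoint, and integration by parts via Proposition~\ref{prop:ddcprops}(2) produces
\[
\int \psi(\alpha;z,t)\, f^*\omega_{z,t}(\alpha) = \int \psi(f(\alpha);z,t)\,\omega_{z,t}(\alpha) - \psi(f(\infty);z,t) + \sum_{w:\,f(w)=\infty} e_f(w)\,\psi(w;z,t).
\]

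To finish, I would compute $\int \log|f_1(w)|\,\omega_{z,t}(w)$ by a Jensen-style argument: the change of variables $w = tu+z$ together with the classical formula $\int\log|u-a|\,\omega_\CC(u)=\tfrac12\log(1+|a|^2)$ (which follows from the $\SU_2(\CC)$-invariance of the chordal distance kernel and the base case $a=0$) yields $\int\log|w-\rho|\,\omega_{z,t}(w) = \log t + \psi(\rho;z,t)$. Factoring $f_1(w)=b_d\prod_i(w-\rho_i)$ then gives $\int \log|f_1|\,\omega_{z,t} = \log|b_d|+d\log t+\sum_i\psi(\rho_i;z,t)$. Since the $\rho_i$ counted with multiplicity coincide with $\{w:f(w)=\infty\}$ weighted by $e_f(w)$, substituting into the target identity reduces it to $\tfrac{1}{2}\log c(z,t;f) = \log|b_d|+\log t+\psi(f(\infty);z,t)$, which is immediate from the definitions since $c(z,t;f)=|b_d|^2t^2 \cdot e^{2\psi(f(\infty);z,t)}$ when $b_d\neq 0$. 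The case $b_d=0$, where $\infty$ is itself a pole of $f$, is the main obstacle: one must either redo the singularity bookkeeping by hand (now $\infty$ contributes to $f^*\delta_\infty$, producing extra terms that still assemble to match $\tfrac{1}{2}\log c(z,t;f)=\log|a_d|$), or appeal to continuity of both sides of the asserted identity in the coefficients of $F$ together with density of the open set $\{b_d\neq 0\}$.
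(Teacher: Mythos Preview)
Your argument is correct. Both proofs hinge on the current identity $dd^c\psi(\cdot;z,t)=\omega_{z,t}-\delta_\infty$ and an integration by parts, but the bookkeeping is organized differently. The paper keeps the integrand in the form $f^*\psi(\cdot;z,t)-d\,\psi(\cdot;z,t)+\log|f_1|$, integrates this against $dd^c\psi(\cdot;z,t)$, and then moves the $dd^c$ onto the integrand in one stroke; the point is that $dd^c\log|f_1|=\sum_{p_i}\delta_{p_i}-\deg(f_1)\,\delta_\infty$ cancels exactly against the Dirac part of $dd^c(f^*\psi)$, so the result is $f^*\omega_{z,t}-d\,\omega_{z,t}$ with no leftover point masses and no case split on $b_d$. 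You instead isolate $\int\log|f_1|\,\omega_{z,t}$ and evaluate it via the Jensen-type identity $\int\log|u-a|\,\omega_\CC=\tfrac12\log(1+|a|^2)$, then match the resulting $\sum_i\psi(\rho_i;z,t)$ against the boundary terms coming from your integration-by-parts on $\int\psi\,f^*\omega_{z,t}$. The paper's route is a bit slicker in that the $\log|f_1|$ term is absorbed distributionally rather than computed, and the case $b_d=0$ never needs separate treatment; your route is more explicit about where each contribution to $\tfrac12\log c(z,t;f)$ comes from, at the cost of the continuity/density argument at the end.
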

\begin{proof}
Define $\psi_0(z) = \frac{1}{2} \log (1+|z|^2)$; an explicit calculation shows that $dd^c \psi_0 = \omega_\CC - \delta_\infty$ as currents on $\PP^1(\CC)$. We let $[\gamma]\in \SL_2(\CC) / \SU_2(\CC)$ be represented by $\gamma_{z,t}$ as in Lemma~\ref{lem:matrixdecomps}; as a fractional linear transformation on $\CC$, this can be realized as ${\gamma}_{z,t}(\alpha) = t\alpha + z$. Note that
$$\psi(\alpha; z,t)= ({\gamma}_{z,t}^{-1})^* \psi_0(\alpha)\ ,$$ 
so that by Proposition~\ref{prop:ddcprops} we have 
\begin{equation}\label{eq:ddcpsizero}
dd^c \psi(\cdot; z,t) = ({\gamma_{z,t}}^{-1})^* \omega_\CC - \delta_\infty = \omega_{z,t} - \delta_\infty\ .
\end{equation} In local coordinates, $ \omega_{z,t} = \frac{i}{2\pi} \partial_{\overline{\alpha}} \partial_\alpha \psi(\alpha; z,t)\ d\alpha\wedge  d\overline{\alpha}$.

From (\ref{eq:simplifiedintegrand}) we find that 
\begin{align}
\log \frac{||F^\gamma(X,Y)||}{||X,Y||^d} & = \frac{-(d+1)}{2} \log t + \frac{1}{2} \log \frac{ |f_0({\gamma}(\alpha)) - z f_1({\gamma}(\alpha))|^2 + t^2 |f_1({\gamma}(\alpha)|^2}{(|\alpha|^2 + 1)^d}\ ,
\end{align} where $\alpha = \frac{X}{Y}$ is a local coordinate on $\PP^1(\CC)$. As $|\alpha|\to\infty$, the second term tends to $d\log t + \frac{1}{2}\log c(z,t;f)$, where $c(z,t;f)$ is given by
\[
c(z,t;f):= \left\{\begin{matrix} |a_d|^2\ , & f(\infty) = \infty\\ (|f(\infty)-z|^2+t^2) |b_d|^2\ , & f(\infty) \neq \infty\end{matrix}\right. = |a_d - zb_d|^2 + t^2 |b_d|^2\ .
\]
Inserting the expression in (\ref{eq:simplifiedintegrand}) into the definition of $R_F$, we find that

\begin{align}
R_F(\gamma) & = -\frac{d+1}{2} \log t + \frac{1}{2}\int_{\PP^1(\CC)} \log  \frac{ |f_0({\gamma}(\alpha)) - z f_1({\gamma}(\alpha))|^2 + t^2 |f_1({\gamma}(\alpha)|^2}{(|\alpha|^2 + 1)^d} \omega_\CC\nonumber\\
& = -\frac{d+1}{2} \log t + \frac{1}{2} \int_{\PP^1(\CC)}  \log \frac{ |f_0({\gamma}(\alpha)) - z f_1({\gamma}(\alpha))|^2 + t^2 |f_1({\gamma}(\alpha)|^2}{(|\alpha|^2 + 1)^d} (\omega_\CC - \delta_\infty) + \frac{1}{2} \log c(z,t;f) +d\log t\nonumber\\
& = \frac{d-1}{2} \log t + \frac{1}{2} \int_{\PP^1(\CC)} \log  \frac{ |f_0(\alpha) - z f_1(\alpha)|^2 + t^2 |f_1(\alpha)|^2}{(|{\gamma}^{-1}(\alpha)|^2 + 1)^d} ({\gamma}^{-1})^* dd^c \psi_0 + \frac{1}{2} \log c(z,t;f) \nonumber\\
& = \frac{d-1}{2} \log t + \int_{\PP^1(\CC)} \left(f^*\psi(\alpha; z,t) - d \psi(\alpha; z,t) + \log |f_1(\alpha)|\right) dd^c \psi(\cdot; z,t) + \frac{1}{2} \log c(z,t;f)  \ .\label{eq:rewritepotential}
\end{align} We apply Proposition~\ref{prop:ddcprops} to pass the $dd^c$ onto the integrand; note that
\begin{align*}
dd^c f^* \psi(\cdot; z,t) = f^*( \omega_{z,t} - \delta_\infty)&= f^*  \omega_{z,t} - \left(\sum_{f(p_i) = \infty, p_i \neq \infty} \delta_{p_i} + (d-\textrm{deg}(f_1)) \delta_\infty\right)\ ,\\
dd^c \log |f_1(\alpha)| & = \sum_{f(p_i) = \infty,\ p_i \neq \infty} \delta_{p_i} - \textrm{deg}(f_1) \delta_\infty\ .
\end{align*} Inserting these into (\ref{eq:rewritepotential}) and simplifying yields 
\begin{align}
R_F(\gamma) &= \frac{d-1}{2} \log t + \int_{\PP^1(\CC)} \psi(\alpha; z,t) \left(f^*  \omega_{z,t} - d\cdot  \omega_{z,t}\right) + \frac{1}{2} \log c(z,t;f)\nonumber\\
& = \frac{d-1}{2}\log t + \int_{\PP^1(\CC)} \psi(\alpha; z,t) f^*  \omega_{z,t} - d\int_{\PP^1(\CC)} (\gamma^{-1})^* \psi_0(\alpha) (\gamma^{-1})^* \omega_\CC + \frac{1}{2} \log c(z,t;f)\nonumber\\
& = \frac{d-1}{2} \log t + \int_{\PP^1(\CC)} \psi(\alpha; z,t) f^*  \omega_{z,t} - \frac{d}{2} + \frac{1}{2} \log c(z,t;f)\nonumber\ ,
\end{align} which is the asserted formula.
\end{proof}

\subsection{Harmonic Functions on Hyperbolic Space}
The various hyperbolic spaces introduced in Section~\ref{sect:spaces} each come equipped with a Laplace operator; in the geometric setting (i.e. $\hh_K$ and $\BB_K$) this is the Laplace-Beltrami operator that can be computed in terms of the metric tensor for $\dH$ and $\dB$ respectively. On $\SL_2(K) / \SU_2(K)$, the Laplace-Beltrami operator can be interpreted in terms of the Casimir element of the universal enveloping algebra of $\mathfrak{sl}_2(K)$, though we will not make use of this perspective in the present article (see \cite{Ha}). 

Viewing $\hh_K\subseteq \mathbb{E}_K$ with coordinates $(z,t)$ where $z\in K, t>0$, we have (e.g. \cite{Sto}, Exercise 3.5.11)
\[
\Delta_h^{\hh_K} = \left\{\begin{matrix} t^2 \Delta_{\textrm{std}}\ , & K= \RR\\ t^2 \Delta_{\textrm{std}} - t\frac{\partial}{\partial t}\ , & K = \CC\end{matrix}\right.\ ,
\] where $\Delta_{\textrm{std}}$ is the standard Laplace operator on $\EE_K$ given $\Delta_{\textrm{std}} = \sum_{i=1}^{[K:\RR] + 1} \frac{\partial^2}{\partial x_i^2}$.

On $\BB_K$, we will use an expression for the hyperbolic Laplacian in terms of spherical (or polar) coordinates on $\EE_K$ (see, e.g. \cite{Sto}, Exercise 3.5.6)
\[
\Delta_h^{\BB_K} = \frac{1-r^2}{r^2} \left((1-r^2) N^2 +([K:\RR] -1 ) (1+r^2) N + (1-r^2) \Delta_\sigma\right)\ ,
\] where $N = r\frac{\partial}{\partial r}$ and $\Delta_\sigma$ is the part of $\Delta_{\textrm{std}}$ corresponding to the angular coordinates.

A continuous function $g:\PP^1(K) \to \RR$ can be extended to a function $\H\{g\}:\hh_{K} \to \RR$ via the formula
\[
\H\{g\}((z,t)) : = (g * P_K)(z,t) = \int_{K} g(\alpha) P_K(\alpha -z,t) d\ell_K\ ,
\] that is hyperbolic harmonic, i.e $\Delta_h^{\hh_K} H\{g\} = 0$. The function $\H\{g\}$ extends $g$ in the sense that, if $z\in \PP^1(K)$ is viewed as a point in the ideal boundary of $\hh_{K}$, then $\lim_{\hh_{K}\ni w\to z} \H\{g\}(w) = g(z)$ (see \cite{Sto}, Theorem 5.6.2). The next result expresses this extension as a function on $\SL_2(K)/ \SU_2(K)$:

\begin{prop}\label{prop:hyperbolicextensions}
Let $g:\PP^1(K) \to \RR$ be a continuous function. Given $[\gamma]\in \SL_2(K) / \SU_2(K)$, let
\[
\check{g} ([\gamma]) := \int_{\PP^1(K)} g(\gamma(\alpha)) \omega_{K}\ .
\] Then 
\[
\H\{g\}(\gamma(j)) = \check{g}([\gamma])\ .
\]
\end{prop}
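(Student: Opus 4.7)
The plan is a direct calculation, using the upper half-space representative $\gamma_{z,t}$ from Lemma~\ref{lem:matrixdecomps}(b). Before that, I would verify that both sides genuinely descend to functions on $\SL_2(K)/\SU_2(K)$: for the left side this is because every $\tau \in \SU_2(K)$ fixes the distinguished point $j = (0,1) \in \hh_K$, so $(\gamma \tau)(j) = \gamma(j)$; for the right side, it follows from the $\SU_2(K)$-invariance of $\omega_K$ under pullback, which was recorded earlier in Section~\ref{sect:bg}. Writing $[\gamma]$ in the form $[\gamma_{z,t}]$ therefore incurs no loss of generality.

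Next I would compute each side in turn. Since $\gamma_{z,t}$ acts on $\hh_K$ as a hyperbolic isometry sending $(0,1)$ to $(z,t)$, the defining Poisson-integral formula for $\H\{g\}$ gives
\[
\H\{g\}(\gamma_{z,t}(j)) \;=\; \H\{g\}((z,t)) \;=\; \int_K g(\alpha)\, P_K(\alpha - z, t)\, \frac{d\ell_K(\alpha)}{\pi}\ ,
\]
with the normalization of the Poisson kernel chosen to match the normalization of $\omega_K$ as a probability measure. For the right side, I would apply the change of variables $\beta = \gamma_{z,t}(\alpha)$ and rewrite the integral against the pushforward measure,
\[
\check{g}([\gamma_{z,t}]) \;=\; \int_{\PP^1(K)} g(\gamma_{z,t}(\alpha))\, \omega_K(\alpha) \;=\; \int_{\PP^1(K)} g(\beta)\, (\gamma_{z,t})_*\omega_K(\beta)\ .
\]
Then I would invoke identity~(\ref{eq:pushforwardomegaK}), which identifies $(\gamma_{z,t})_*\omega_K$ with $P_K(\beta - z, t)\, d\ell_K(\beta)/\pi$. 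Comparing the resulting expression with the Poisson integral above finishes the proof.

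The only subtle point is keeping careful track of the normalization factor of $\pi$ shared by the Fubini-Study form $\omega_K$ and the Poisson kernel, so that the two sides literally agree rather than merely being proportional. A more conceptual alternative would be to observe that both $\H\{g\}$ (by construction) and $[\gamma] \mapsto \check{g}([\gamma])$ (by the $\SL_2(K)$-equivariance of the push-forward formula) are hyperbolic harmonic with the same continuous boundary values $g$ on $\PP^1(K)$, and then invoke uniqueness of the Dirichlet problem on $\hh_K$; but since~(\ref{eq:pushforwardomegaK}) is already available, the direct change-of-variables argument is the more economical route.
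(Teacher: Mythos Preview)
Your proposal is correct and follows essentially the same route as the paper: pick the upper-triangular representative $\gamma_{z,t}$, apply the change-of-variables identity~(\ref{eq:pushforwardomegaK}) to rewrite $\check g([\gamma_{z,t}])$ as the Poisson integral, and identify the result with $\H\{g\}(z,t)$. Your explicit check that both sides are $\SU_2(K)$-invariant and your care with the $1/\pi$ normalization are welcome additions that the paper's proof leaves implicit.
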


\begin{proof}
By Lemma~\ref{lem:matrixdecomps} each $[\gamma]\in \SL_2(K) / \SU_2(K)$ can be represented by a matrix $$\gamma =\gamma_{z,t} = \begin{pmatrix} \sqrt{t} & z / \sqrt{t} \\ 0 & 1/\sqrt{t} \end{pmatrix}\ ,$$ where $z\in K$ and $t>0$. This, in turn, corresponds to an affine map $\gamma_{z,t}(\alpha) = t\alpha + z$ acting on $\PP^1(K)$. By (\ref{eq:pushforwardomegaK}),
\[
(\gamma_{z,t}^{-1})^* \omega_{K} = P_K(\alpha-z, t) \frac{d\ell_K}{\pi}\ .
\] Therefore,
\begin{align*}
\check{g}([\gamma_{z,t}]) &= \int_{\PP^1(K)} g(\gamma_{z,t}(\alpha)) \omega_{K}\\
& =  \int_{\PP^1(K)} g(\alpha) (\gamma_{z,t}^{-1})^* \omega_{K}\\
& = \int_{K} g(\alpha) P_K(\alpha-z,t) d\ell_K \\
& = (g* P_K)(z,t) = \H\{g\}(\gamma_{z,t}(j))
\end{align*} as asserted. 
\end{proof}

As an application of this -- and because we will need it later -- we have
\begin{prop}\label{prop:logconvergencesphere}
Let $\nu$ be a probability measure on $S^2 = \partial \BB_\CC$ so that the pushforward $\Sigma_* \nu$ by stereographic projection has continuous potentials on $\PP^1(\CC)$, i.e. there is a continuous function $g_\nu:\PP^1(\CC) \to \RR$ so that $dd^c g_\nu = \Sigma_* \nu - \delta_\infty$ as measures on $\PP^1(\CC)$.

For any point $\xi_0 \in S^2$ and any sequence of points $\xi_n \in \BB_{\CC}$ with $\xi_n \to \xi_0$, we have
\[
\int_{S^2} \log |\zeta - \xi_n| d\nu(\zeta) \to \int_{S^2} \log |\zeta - \xi_0| d\nu(\zeta)\ .
\]
\end{prop}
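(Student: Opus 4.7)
The plan is to show that $H(\xi):=\int_{S^2}\log|\zeta-\xi|\,d\nu(\zeta)$ is continuous on the closed ball $\overline{\BB}_\CC$. Upper semicontinuity is cheap: $\log|\zeta-\xi|\leq\log 2$ on $S^2\times\overline{\BB}_\CC$, and the continuous-potential hypothesis forces $\nu$ to have no atoms, so $\log|\zeta-\xi_n|\to\log|\zeta-\xi_0|$ pointwise $\nu$-almost everywhere; reverse Fatou then gives $\limsup_n H(\xi_n)\leq H(\xi_0)$. The substance is lower semicontinuity, which I reduce to the uniform integrability bound
\[
\sup_{\xi\in\overline{\BB}_\CC}\int_{|\zeta-\xi|<r}\bigl|\log|\zeta-\xi|\bigr|\,d\nu(\zeta)\ \longrightarrow\ 0\quad\text{as }r\to 0.
\]

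To establish this estimate for $\xi\in S^2$, let $\Sigma$ denote stereographic projection and $\mu=\Sigma_*\nu$. Since $|\zeta-\eta|_{\RR^3}=2\|\Sigma(\zeta),\Sigma(\eta)\|$ for $\zeta,\eta\in S^2$, the bound restricted to $\xi\in S^2$ is equivalent to $\sup_{z\in\PP^1(\CC)}\int_{\|w,z\|<r}|\log\|w,z\||\,d\mu(w)\to 0$. Set $U(z):=\int\log\|w,z\|\,d\mu(w)$; this differs from $g_\nu$ by a continuous function on $\PP^1(\CC)$ (the difference solves $dd^c(\cdot)=\omega_\CC-\delta_\infty$ and is globally bounded), hence $U$ itself is continuous. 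Setting $U_r(z):=\int\log\max(\|w,z\|,r)\,d\mu(w)$ gives continuous functions $U_r\downarrow U$, so Dini's theorem on the compact $\PP^1(\CC)$ yields $U_r-U\to 0$ uniformly. Writing
\[
\int_{B_r(z)}\bigl|\log\|w,z\|\bigr|\,d\mu\;=\;(U_r(z)-U(z))+|\log r|\,\mu(B_r(z)),
\]
the remaining task is to control $|\log r|\,\mu(B_r(z))$ uniformly in $z$. Here the identity $\log(r/\|w,z\|)\geq|\log r|$ on $B_{r^2}(z)$ yields $|\log r|\,\mu(B_{r^2}(z))\leq U_r(z)-U(z)$, which tends to $0$ uniformly; replacing $r$ by $\sqrt r$ delivers the analogous bound on $|\log r|\,\mu(B_r(z))$.

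Next I would extend the uniform integrability from $S^2$ to all of $\overline{\BB}_\CC$ via the algebraic identity
\[
|\zeta-\xi|^2\;=\;(1-s)\,|\zeta-\eta|^2+s^2,\qquad \eta:=\xi/|\xi|,\ s:=1-|\xi|,
\]
which is immediate from $|\zeta|=|\eta|=1$. This gives a two-sided comparison $|\zeta-\xi|\asymp\max(s,|\zeta-\eta|)$ uniformly for $|\xi|\geq 1/2$, so the $S^2$-estimate transfers to $\{|\xi|\geq 1/2\}$ up to a harmless constant; the region $|\xi|<1/2$ is trivial since $|\zeta-\xi|\geq 1/2$ there. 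With the uniform estimate in hand, the convergence $H(\xi_n)\to H(\xi_0)$ follows by a standard splitting: given $\epsilon>0$, choose $r$ so that the supremum is at most $\epsilon$; for large $n$, $|\xi_n-\xi_0|<r$, so the small-ball integrals near $\xi_0$ and $\xi_n$ are each $O(\epsilon)$ while on the complementary region the integrands $\log|\zeta-\xi_n|$ converge uniformly to $\log|\zeta-\xi_0|$.

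The main technical obstacle is the uniform bound on $|\log r|\,\mu(B_r(z))$: this is precisely where \emph{continuity} of the potential (as opposed to mere boundedness or lower semicontinuity) is essential, and is obtained from Dini's theorem combined with the $r$-versus-$r^2$ comparison. Extending from $S^2$ to $\overline{\BB}_\CC$ is comparatively routine once the boundary identity for $|\zeta-\xi|^2$ is in hand.
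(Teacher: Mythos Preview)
Your proof is correct and takes a genuinely different route from the paper's. The paper computes directly: after rotating $\xi_0$ to the south pole, it transfers the integral $\int_{S^2}\log\frac{|\zeta-\xi|}{|\zeta-\xi_0|}\,d\nu$ to $\hh_\CC$ via the explicit isometry $\iota_\CC$, and after an integration by parts arrives at the closed form $\H\{g_\nu\}(z',t')-g_\nu(0)$, where $\H\{g_\nu\}$ is the hyperbolic harmonic extension of the potential (Proposition~\ref{prop:hyperbolicextensions}). Boundary continuity of this extension then gives the result. Your approach instead establishes uniform integrability of $\log|\zeta-\xi|$ via Dini's theorem on the truncated potentials $U_r$, together with the neat $r$-versus-$r^2$ trick for the mass term; the algebraic identity $|\zeta-\xi|^2=(1-s)|\zeta-\eta|^2+s^2$ then carries the estimate from $S^2$ into the interior. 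This is a clean, self-contained potential-theoretic argument that avoids the paper's hyperbolic-extension machinery entirely, and it actually yields the slightly stronger statement that $H$ is continuous on the full closed ball.

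One small gap: your justification that $U(z)=\int\log\|w,z\|\,d\mu(w)$ is continuous is not quite right as written. The difference $g_\nu-U$ does solve $dd^c(\cdot)=\omega_\CC-\delta_\infty$, but the unique-up-to-constant solution is $\psi_0(z)=\tfrac12\log(1+|z|^2)$, which is \emph{not} globally bounded on $\PP^1(\CC)$; indeed, taken literally, the hypothesis ``$g_\nu$ continuous on $\PP^1(\CC)$ with $dd^c g_\nu=\mu-\delta_\infty$'' forces $g_\nu=\psi_0+U+C$, so either $g_\nu(\infty)=+\infty$ or $U(\infty)=-\infty$. What you actually need (and what the intended application to $\mu_f$ provides) is that the \emph{local} potentials of $\mu$ are continuous, which is equivalent to continuity of $U$ on $\PP^1(\CC)$: away from $\infty$ the classical logarithmic potential $\int\log|w-z|\,d\mu(w)$ differs from any local potential of $\mu$ by a harmonic function, hence is continuous, and the rotation invariance of the chordal metric then handles the point $\infty$. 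With that adjustment your Dini argument goes through unchanged.
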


\begin{proof}
By rotating the sphere, we can assume that $\xi_0 = (0, 0, -1)$ is the `south pole'; notice that $\Sigma(\xi_0) = 0\in \CC$. We need the following lemma:

\begin{lemma}
Let $\zeta, \xi\in \overline{\BB_\CC}$ correspond to points $(z,t), (z',t')\in \overline{\hh_{\CC}}$ (resp.), and write $\vn = (0,0,1)$. Then
\[
|\zeta-\xi|^2 = \left\{\begin{matrix}\frac{4\left( |z-z'|^2 +(t'-t)^2\right)}{(|z|^2 + (t+1)^2) \cdot (|z'|^2 + (t'+1)^2)}\ , & \zeta, \xi \neq \vn\\[7pt]
\frac{4}{|z'|^2+(t'+1)^2}\ , & \zeta = (0,0,1), \xi \neq \vn\\[7pt]
\frac{4}{|z|^2 + (t+1)^2}\ , & \zeta \neq (0,0,1), \xi = \vn\\[7pt]
0\ , & \zeta = \xi = \vn\end{matrix}\right.
\] where $|\cdot|$ on the left side is chordal distance, and on the right side is used to denote the usual absolute value on $\CC$.
\end{lemma}
\begin{proof}
Note that the last three cases can be obtained from the first by taking limits as $\zeta\to \vn$ and / or $\xi \to \vn$. So, it suffices to prove the first formula.

The map $\iota_{\CC}$ appearing in Proposition~\ref{prop:usualhypisoms} can be computed explicitly as $\iota_{\CC} = \sigma\circ \eta$, where $\sigma$ is inversion in the sphere centered at $\vn$ of radius $\sqrt{2}$, and $\eta$ is reflection in the hyperplane $\{(x, y, 0)\ : x,y\in \RR\} \subseteq \EE_{\CC}$. Let $p = \eta(z,t) = (z,-t)$ and $q = \eta(z', t') = (z', -t')$. Then by \cite{Rat} Theorem 4.1.3 we have
\[
|\zeta - \xi| = |\sigma(p) - \sigma(q)| = \frac{2 |p-q|}{|p-\vn| \cdot |q-\vn|}\ ,
\] where here all of the absolute values are in terms of the Euclidean metric on $\EE_{\CC}$. Inserting the definition of $p,q$ yields
\[
|\zeta - \xi|^2 = \frac{4(|z-z'|^2 + (t'-t)^2}{(|z|^2 + (t+1)^2)\cdot(|z'|^2 + (t'+1)^2)}\ ,
\] where now on the right side we are using the absolute values to denote the usual distance on $\CC$.
\end{proof}

We now return to the proof of Proposition~\ref{prop:logconvergencesphere}. Note that in our case, for $\zeta, \xi, \xi_0\in \overline{\BB_{\CC}}$ corresponding to $(z,0), (z',t'), (0,0)\in \overline{\hh_{\CC}}$ (resp.), we find
\begin{align*}
|\zeta - \xi|^2 &= \frac{4 (|z-z'|^2 + (t')^2)}{(|z|^2 + 1) \cdot (|z'|^2 + (t'+1)^2)}\\
|\zeta-\xi_0|^2 & = \frac{4|z|^2}{|z|^2+1}\\
|\xi - \xi_0|^2 & = \frac{4 (|z'|^2 + (t')^2)}{|z'|^2 + (t'+1)^2}\ .
\end{align*} In particular,
\begin{equation}
\frac{|\zeta - \xi|^2}{|\zeta-\xi_0|^2} = \frac{|z-z'|^2 + (t')^2}{|z|^2\cdot(|z'|^2 + (t'+1)^2)}\ .
\end{equation}
Taking logarithms, integrating, and doing some simplification yields
\[
\int_{S^2} \log \frac{|\zeta - \xi|}{|\zeta-\xi_0|} d\nu(\zeta)= \frac{1}{2}\int_{\PP^2(\CC)} \log \frac{|z-z'|^2 + (t')^2}{|z|^2} d\Sigma_* \nu(z) - \log(|z'|^2 + (t'+1)^2)\ .
\] Note that $-\log(|z'|^2 + (t'+1)^2) \to 0$ as $\xi \to \xi_0$, since in this case $(z', t') \to (0,0)$. So it suffices to show that the integral in the above expression tends to 0 as $\xi \to \xi_0$. Note that the integrand tends to 0 as $|z|\to \infty$, so that
\[
\frac{1}{2} \int_{\PP^1(\CC)} \log \frac{|z-z'|^2 + (t')^2}{|z|^2} d\Sigma_* \nu(z) = \frac{1}{2} \int_{\PP^1(\CC)} \log\frac{|z-z'|^2 + (t')^2}{|z|^2} dd^c g_\nu(z)\ ,
\] where $g_\nu$ is the potential function referred to in the statement of the proposition. Applying integration by parts to transfer the $dd^c$ to the integrand, we have
\begin{equation}\label{eq:swapddcearlyprop}
\frac{1}{2}\int_{\PP^1(\CC)} \log\frac{|z-z'|^2 + (t')^2}{|z|^2} dd^c g_\nu = \frac{1}{2}\int_{\PP^1(\CC)} g_\nu\  dd^c \left(\log \frac{|z-z'|^2 + (t')^2}{|z|^2}\right)\ .
\end{equation} The $dd^c$ here can be computed explicitly. Letting $\gamma_{z', t'} = \begin{pmatrix} \sqrt{t'} & z'/\sqrt{t'}\\ 0 & 1/\sqrt{t'}\end{pmatrix}$, by (\ref{eq:ddcpsizero}) we find
\[
dd^c \left(\log \frac{|z-z'|^2 + (t')^2}{|z|^2}\right) = 2(\gamma_{z',t'}^{-1})^*\omega_{\CC} -2 \delta_0\ .
\]Inserting this into (\ref{eq:swapddcearlyprop}) and applying Proposition~\ref{prop:hyperbolicextensions} yields
\begin{align*}
\frac{1}{2}\int_{\PP^1(\CC)} g_\nu dd^c\left(\log \frac{|z-z'|^2 + (t')^2}{|z|^2}\right) &= \int_{\PP^1(\CC)} g_\nu\circ \gamma_{z',t'} - g_\nu(0)\\
& = \H\{g_\nu\}(z',t') - g_\nu(0)\ .
\end{align*} Since $\H\{g_\nu\}$ extends continuously to $\overline{\hh_{\CC}}$, giving $g_\nu$ on the boundary $\PP^1(\CC)$, we see that as $\xi\to \xi_0$, $\H\{g_\nu\}(z',t') - g_\nu(0) \to 0$, which finishes the proposition.
\end{proof}

\subsection{Conformal Barycenters}\label{sect:barycentersDE}
In the case that $K=\CC$, we will make use of the conformal barycenter of an admissible measure on $S^2$, an idea originally due to Douady and Earle \cite{DE}. 

A probability measure $\mu$ on $S^2$ will be called admissible if no point $z\in S^2$ has mass $\geq \frac{1}{2}$. In this case, the function $h_\mu(z) = -\frac{1}{2} \int_{S^2} \log \left(\frac{1-|z|^2}{|z-\zeta|^2}\right) d\mu(\zeta)$ on $\BB_\CC$ is convex with respect to the hyperbolic metric and attains a unique minimum at a point $\Bary(\mu)$ of $\BB_\CC$. The barycenter satisfies $\gamma(\Bary(\mu)) = \Bary(\gamma_* \mu)$ for all orientation-preserving automorphisms of $\PP^1(\CC)\simeq S^2$. We need the following Proposition:

\begin{prop}\label{prop:hyplapDE}
Let $\mu$ be a probability measure on $S^2$, and let $h_\mu(z):= -\frac{1}{2}\int_{S^2} \log \left(\frac{1-|z|^2}{|z-\zeta|^2}\right) d\mu(\zeta)$ be the Douady-Earle function on $\BB_\CC$. Then 
\[
\Delta_h^{\BB_\CC}\ h_\mu = 4\ .
\]
\end{prop}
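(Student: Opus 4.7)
My plan is to split $h_\mu$ into a radial piece plus a logarithmic potential and compute the hyperbolic Laplacian of each separately. Since $\mu$ is a probability measure on $S^2$, expanding the logarithm gives
\[
h_\mu(z) = -\tfrac{1}{2}\log(1-|z|^2) + \int_{S^2} \log|z-\zeta|\, d\mu(\zeta) =: A(z) + B(z).
\]
The goal is to show $\Delta_h^{\BB_\CC} A = 3 + |z|^2$ and $\Delta_h^{\BB_\CC} B = 1 - |z|^2$, so that the two contributions sum to the constant $4$.

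For the radial piece $A$, I would apply the spherical-coordinates formula for $\Delta_h^{\BB_\CC}$ recalled in Section~\ref{sect:spaces}. On radial functions the $\Delta_\sigma$ term vanishes, so I only need to compute the radial derivatives of $f(r) = -\tfrac{1}{2}\log(1-r^2)$, namely $f'(r) = r/(1-r^2)$ and $f''(r) = (1+r^2)/(1-r^2)^2$, and substitute. The arithmetic is routine and produces $3 + r^2$.

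For the potential piece $B$, I would first rearrange the spherical-coordinates formula into the equivalent expression
\[
\Delta_h^{\BB_\CC} = (1-|z|^2)^2\, \Delta_{\text{std}} + 2(1-|z|^2)\, z\cdot \nabla,
\]
where $\Delta_{\text{std}}$ and $\nabla$ are the Euclidean operators on $\RR^3$. Because $\log|z-\zeta|$ is smooth in $z\in \BB_\CC$ for $\zeta\in S^2$, I may differentiate under the integral sign, reducing matters to showing $\Delta_h^{\BB_\CC}\log|z-\zeta| = 1 - |z|^2$ for each fixed $\zeta\in S^2$. Using $\Delta_{\text{std}}\log|z-\zeta| = 1/|z-\zeta|^2$ (the three-dimensional Newtonian Laplacian) together with the identity $|z|^2 - z\cdot \zeta = \tfrac{1}{2}(|z-\zeta|^2 + |z|^2 - 1)$ that follows from $|\zeta| = 1$, substitution into the displayed formula produces the clean cancellation to $1 - |z|^2$, independent of $\zeta$. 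Integrating against $d\mu$ then yields $\Delta_h^{\BB_\CC} B = 1-|z|^2$, and combining with the radial computation finishes the proof.

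The only real obstacle is confirming the cancellation in the final step; it is not conceptually deep, but the appearance of the exact combination $(1-|z|^2)^2 \Delta_{\text{std}} + 2(1-|z|^2)\,z\cdot \nabla$ is precisely what is needed for the cross-term coming from $z\cdot \nabla \log|z-\zeta|$ to cancel against the $(1-|z|^2)^2/|z-\zeta|^2$ term. That $\Delta_h^{\BB_\CC}\log|z-\zeta|$ comes out independent of $\zeta$ is what forces $\Delta_h^{\BB_\CC} h_\mu$ to be a genuine constant rather than a $\mu$-dependent function, and may be interpreted as a manifestation of the Möbius-equivariance of both $\Delta_h^{\BB_\CC}$ and the kernel $(1-|z|^2)/|z-\zeta|^2$.
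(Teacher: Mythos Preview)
Your proposal is correct and takes essentially the same approach as the paper: both split $h_\mu$ into the radial piece $-\tfrac{1}{2}\log(1-|z|^2)$ and the logarithmic potential $\int \log|z-\zeta|\,d\mu$, compute the hyperbolic Laplacian of each separately (obtaining $3+|z|^2$ and $1-|z|^2$), and add. The only cosmetic difference is that the paper uses the Euclidean-coordinate form $\Delta_h^{\BB_\CC} = (1-r^2)^2\Delta_{\mathrm{std}} + 2(1-r^2)\sum x_i\partial_{x_i}$ for both pieces, whereas you use the spherical-coordinate form for the radial part before passing to this Euclidean form for the potential part.
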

\begin{proof}
Endow $\BB$ with the usual Euclidean coordinates $(x_1, x_2, x_3)$. The hyperbolic Laplacian on $\BB$ can be written as
\[
\Delta_h^{\BB_\CC} = (1-r^2)^2 \Delta_{\textrm{std}} + 2(1-r^2) \sum_{i=1}^3 x_i \frac{\del}{\del x_i}\ ,
\] where $\Delta_{\textrm{std}} = \sum_{i=1}^3 \frac{\del^2}{\del x_i^2}$ is the standard Euclidean Laplacian on $\RR^3$. By direct computation we find that
\[
\Delta_h^{\BB_\CC} (\log (1- |z|^2))= -6-2|z|^2\ ,
\] and for fixed $\zeta\in S^2$ we find
\[
\Delta_h^{\BB_\CC} (\log |z- \zeta|^2) = 2(1-|z|^2)\ .
\] 
The above computations imply that
\[
\Delta_h^{\BB_\CC} (h_\mu) = -\left((-3-r^2) - \int (1-r^2) d\mu(\zeta)\right) = 4\ .
\]
\end{proof}

\section{Basic Properties of $R_F$}\label{sect:bp}

\subsection{$R_F$ is Proper}
Throughout this section, we will work with the quotient space model of hyperbolic space $(\SL_2(K)/\SU_2(K), \dSL)$. 

\begin{thm}\label{thm:growthrates}
Fix a homogeneous lift $F$ of $f$. The map $R_F$ is smooth and proper; in particular, there are constants $C_1(F), C_2(F)$ so that for any $[\gamma]\in \SL_2(K)$ we have
\[
\frac{d-1}{2} \dSL([\gamma], [\textrm{id}]) +\log C_1(F) - d \leq R_F([\gamma]) \leq \frac{d+1}{2} \dSL([\gamma], [\textrm{id}]) + \log C_2(F)\ .
\]
\end{thm}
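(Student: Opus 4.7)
I would start with smoothness. Since $F_0$ and $F_1$ are coprime homogeneous polynomials, they share no common root in $K^2 \setminus \{0\}$, so $\|F^\gamma(v)\| > 0$ for every $v \in S_K$ and every $\gamma \in \SL_2(K)$. The integrand $\log\|F^\gamma(v)\|$ defining $R_F$ is therefore smooth jointly in $(\gamma, v)$ on the compact sphere $S_K$, and standard differentiation under the integral sign yields $R_F \in C^\infty(\SL_2(K)/\SU_2(K))$.

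For the growth estimates, I would use the normal form $\gamma = \tau \eta_A$ from Lemma~\ref{lem:matrixdecomps}(a), for which $A = \dSL([\gamma], [\textrm{id}])$ and $\tau \in \SU_2(K)$. Direct expansion of $F^{\tau\eta_A}(X,Y) = \eta_A^{-1}\, F^\tau(\eta_A \cdot (X,Y))$ shows that the $X^j Y^{d-j}$-coefficient of $(F^\gamma)_i$ is $c_{i,j}^\tau \cdot e^{k_{i,j} A/2}$, where $c_{i,j}^\tau$ is the corresponding coefficient of $F^\tau$ and $k_{i,j} \in \{-(d+1), -(d-1), \ldots, d-1, d+1\}$, with the maximum $d+1$ attained at $(i,j) = (1,d)$. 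Since $\SU_2(K)$ is compact, the $c_{i,j}^\tau$ are uniformly bounded in $\tau$; combined with $|X^j Y^{d-j}| \leq 1$ on $S_K$, this gives $\|F^\gamma(v)\| \leq C_2(F)\, e^{(d+1)A/2}$ uniformly on $S_K$, so the upper bound follows by taking logarithms and integrating.

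The lower bound is the main obstacle. I would use $\log\|F^\gamma\| \geq \max(\log|F_0^\gamma|, \log|F_1^\gamma|)$ to reduce to bounding $\int_{S_K}\log|F_i^\gamma|\,d\sigma_K$ from below for a well-chosen $i \in \{0,1\}$. The key uniform input is furnished by coprimality: $m_F := \min_{v\in S_K}\|F(v)\| > 0$, and by unitary invariance of the norm $|a_d^\tau|^2 + |b_d^\tau|^2 = \|F^\tau(1,0)\|^2 \geq m_F^2$, so at least one of $|a_d^\tau|$, $|b_d^\tau|$ is $\geq m_F/\sqrt{2}$ independently of $\tau$. In the case $|a_d^\tau| \geq m_F/\sqrt{2}$, factor $F_0^\gamma(X,Y) = a_d^\tau e^{(d-1)A/2}\prod_{j=1}^d(X - e^{-A}\alpha_j^\tau Y)$ over $\overline{K}$, where the roots $\alpha_j^\tau$ of $F_0^\tau(X,1)$ are uniformly bounded in $\tau$ (since the monic-form coefficients $c_{0,j}^\tau/a_d^\tau$ have bounded numerator and uniformly-bounded-below denominator). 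The classical average $\int_{S_K}\log|X-\beta Y|\,d\sigma_K$ (equal to $\tfrac{1}{2}\log(1+|\beta|^2) - \tfrac{1}{2}$ for $K = \CC$, with an analogous formula available through real-quadratic factors for $K = \RR$) is continuous in $\beta$ and so uniformly bounded below on $\{|\beta| \leq M\}$. This yields $\int\log|F_0^\gamma|\,d\sigma_K \geq \tfrac{d-1}{2}A + \log(m_F/\sqrt{2}) - C$ for a uniform constant $C$ depending only on $F$ and $K$. The complementary case $|b_d^\tau| \geq m_F/\sqrt{2}$ applied to $F_1^\gamma$ gives the strictly stronger bound $\tfrac{d+1}{2}A - O(1)$ via its leading term $b_d^\tau e^{(d+1)A/2}X^d$. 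Combining the two cases and absorbing constants into $C_1(F)$ produces the asserted $R_F([\gamma]) \geq \tfrac{d-1}{2}A + \log C_1(F) - d$. The crux is uniformity in $\tau \in \SU_2(K)$: without the apriori bound $\max(|a_d^\tau|, |b_d^\tau|) \geq m_F/\sqrt{2}$ provided by coprimality, the leading coefficients of both $F_0^\tau$ and $F_1^\tau$ could degenerate simultaneously and the factorization argument would break.
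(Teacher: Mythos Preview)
Your argument is correct, but the paper takes a different and somewhat cleaner route. Rather than expanding $F^{\tau\eta_A}$ coefficient-by-coefficient and then factoring $F_0^\gamma$ or $F_1^\gamma$ over $\overline{K}$, the paper treats the upper and lower bounds symmetrically by sandwiching $\|F^\gamma(X,Y)\|$ between multiples of $\|\eta_A(X,Y)\|^d$. Concretely, since $F^\gamma = \eta_A^{-1} F^\tau(\eta_A\cdot)$, Lemma~\ref{lem:precomp} gives $e^{-A/2}\|F^\tau(\eta_A(X,Y))\| \le \|F^\gamma(X,Y)\| \le e^{A/2}\|F^\tau(\eta_A(X,Y))\|$, and then Lemma~\ref{lem:phibounds} supplies the $\SU_2(K)$-invariant constants $C_1(F)=\min_{S_K}\|F\|$ and $C_2(F)=\max_{S_K}\|F\|$ so that $\|F^\tau(w)\|$ lies between $C_1(F)\|w\|^d$ and $C_2(F)\|w\|^d$. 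This reduces everything to the single explicit integral $\int_{S_K}\log\|\eta_A(X,Y)\|\,d\sigma_K$ computed in Lemma~\ref{lem:simplestcalculationRF}, which is then bounded above by $A/2$ and below by $(A-2)/2$. No factorization, no case split on which leading coefficient is large, and no separate treatment of $K=\RR$ via real-quadratic factors is needed.

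The payoff of the paper's approach is that the constants $C_1(F), C_2(F)$ are explicit as extrema of $\|F\|$ on the sphere, which is used downstream (e.g.\ in Corollary~\ref{cor:levelbounded}) via the submultiplicativity $C_1(F^n)\ge C_1(F)^{(d^n-1)/(d-1)}$. Your approach, by contrast, makes the mechanism behind the exponents $(d\pm 1)/2$ more transparent: they arise directly as the top and next-to-top weights of $\eta_A$ acting by conjugation on degree-$d$ forms, and your observation that only one of $a_d^\tau, b_d^\tau$ need be bounded away from zero isolates exactly where coprimality enters.
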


\begin{lemma}\label{lem:precomp}
Let $\eta_A= \left(\begin{matrix} e^{A/2} & 0 \\ 0 & e^{-A/2}\end{matrix}\right)\in \SL_2(K)$. Then for any $(X,Y)\in K^2$, we find
\[
e^{-|A|/2} ||X,Y|| \leq ||\eta_A (X,Y)|| \leq e^{|A|/2} ||X,Y||\ .
\]
\end{lemma}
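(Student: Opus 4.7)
The proof is a direct computation, so there is essentially no obstacle. The plan is to expand the norm in coordinates and then estimate.

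First I would simply apply $\eta_A$ to the vector: by matrix multiplication, $\eta_A(X,Y)^{\top} = (e^{A/2} X,\ e^{-A/2} Y)^{\top}$. Taking the Euclidean norm then gives
\[
\|\eta_A(X,Y)\|^2 = e^{A}|X|^2 + e^{-A}|Y|^2.
\]

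Next I would observe that regardless of the sign of $A$, the coefficients $e^A$ and $e^{-A}$ lie between $e^{-|A|}$ and $e^{|A|}$; indeed $\min(e^A, e^{-A}) = e^{-|A|}$ and $\max(e^A, e^{-A}) = e^{|A|}$. Applying these bounds to each term and factoring gives
\[
e^{-|A|}\bigl(|X|^2 + |Y|^2\bigr) \;\leq\; e^{A}|X|^2 + e^{-A}|Y|^2 \;\leq\; e^{|A|}\bigl(|X|^2 + |Y|^2\bigr).
\]

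Finally I would take square roots, use that $\|(X,Y)\|^2 = |X|^2 + |Y|^2$, and note $\sqrt{e^{\pm|A|}} = e^{\pm |A|/2}$ to obtain the claimed inequalities. No intermediate lemmas or structural results are needed; the only minor subtlety worth flagging in the writeup is the case analysis on the sign of $A$, which is handled uniformly by the $|A|$ notation.
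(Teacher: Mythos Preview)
Your proof is correct and essentially matches the paper's approach: both compute $\|\eta_A(X,Y)\|^2 = e^A|X|^2 + e^{-A}|Y|^2$ and bound using $\max(e^A,e^{-A})=e^{|A|}$. The only cosmetic difference is that the paper proves just the upper bound directly and then deduces the lower bound by applying the upper bound to $(\tilde X,\tilde Y)=\eta_A(X,Y)$, whereas you handle both bounds symmetrically via $\min$ and $\max$.
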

\begin{proof}
Note that 
\[
||\eta_A(X,Y)||^2 = e^A |X|^2 + e^{-A}|Y|^2\leq \max(e^A, e^{-A}) (|X|^2+|Y|^2)\ ;
\] taking square roots establishes the upper bound. The lower bound follows by applying the upper bound to $(\tilde{X}, \tilde{Y}) = \eta_A(X,Y)$.
\end{proof}
\begin{lemma}\label{lem:phibounds}
Let $F =[F_0,F_1]$ be a polynomial endomorphism of $K^2$ with $F,G$ homogeneous and coprime of degree $d\geq 1$. There exist constants $C_1(F), C_2(F)$ such that, for any $X,Y\in K^2$,
\begin{equation}\label{eq:basicestimate1}
C_1(F)\cdot ||X,Y||^d \leq ||F(X,Y)|| \leq C_2(F)\cdot ||X,Y||^d\ .
\end{equation} Moreover, the constants $C_1$ and $C_2$ are $\SU_2(K)$-invariant in the sense that $C_1(F^\tau) = C_1(F)$ and $C_2(F^\tau) = C_2(F)$ for any $\tau\in \SU_2(K)$.
\end{lemma}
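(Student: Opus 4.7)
The plan is to reduce the inequality to a statement on the compact set $S_K$ via the homogeneity of $F$, and then invoke continuity and compactness. Since each $F_i$ is homogeneous of degree $d$, we have $F(\lambda X, \lambda Y) = \lambda^d F(X,Y)$ for every $\lambda \in K$, and in particular
\[
\frac{||F(X,Y)||}{||X,Y||^d} = \left\|F\!\left(\frac{(X,Y)}{||X,Y||}\right)\right\|
\]
for any $(X,Y) \neq (0,0)$. Thus it suffices to bound $||F(X,Y)||$ from above and below as $(X,Y)$ ranges over the compact set $S_K \subseteq K^2$.

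For the upper bound, the function $(X,Y) \mapsto ||F(X,Y)||$ is continuous on $S_K$, so attains a finite maximum $C_2(F)$; the right-hand inequality follows after rescaling. For the lower bound, I need $||F(X,Y)|| > 0$ throughout $S_K$. This is where the coprimality hypothesis enters: since $F_0, F_1 \in K[X,Y]$ are coprime homogeneous polynomials of degree $d$, the homogeneous resultant $\Res(F_0, F_1)$ is nonzero, which forces $F_0$ and $F_1$ to have no common zero in $\bar K^2 \setminus \{(0,0)\}$, hence none in $K^2 \setminus \{(0,0)\}$. Therefore $||F||$ is a strictly positive continuous function on the compact set $S_K$ and attains a strictly positive minimum $C_1(F) > 0$, giving the left-hand inequality after rescaling.

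For the $\SU_2(K)$-invariance, note that for $\tau \in \SU_2(K)$ and any $(X,Y) \in K^2$,
\[
||F^\tau(X,Y)|| = ||\tau^{-1} F(\tau \cdot (X,Y))|| = ||F(\tau \cdot (X,Y))||\ ,
\]
since $\tau^{-1} \in \SU_2(K)$ preserves the Euclidean norm. Because $\tau$ also permutes $S_K$, the extrema of $||F^\tau||$ and $||F||$ over $S_K$ coincide, so $C_1(F^\tau) = C_1(F)$ and $C_2(F^\tau) = C_2(F)$.

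The argument is essentially a standard compactness/continuity exercise; the only non-trivial ingredient is the passage from coprimality of $F_0, F_1$ to the non-vanishing of $F$ on $K^2 \setminus \{0\}$, which I would justify by the non-vanishing of the homogeneous resultant. A quantitative refinement expressing $C_1(F)$ in terms of $|\Res(F_0, F_1)|$ is available but not needed for the stated lemma.
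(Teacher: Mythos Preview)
Your proof is correct and follows essentially the same approach as the paper: reduce to the unit sphere via homogeneity, take $C_1(F)$ and $C_2(F)$ to be the min and max of $\|F\|$ on $S_K$, and deduce $\SU_2(K)$-invariance from the norm-preserving property of $\SU_2(K)$. You actually supply more detail than the paper does in justifying $C_1(F)>0$ via coprimality and the nonvanishing resultant; the paper's proof takes this for granted.
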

\begin{proof}
For $X,Y\in K^2$ with $||X,Y|| = 1$, we have
\[
\min_{||\tilde{X},\tilde{Y}|| = 1} ||F(\tilde{X},\tilde{Y})|| \leq ||F(X,Y)|| \leq \max_{||\tilde{X},\tilde{Y}||=1} ||F(\tilde{X}, \tilde{Y})||
\] The general case reduces to this by dividing the expression in (\ref{eq:basicestimate1}) by $||X,Y||^d$ and using the homogeneity of $F$. We take $C_1(F):= \min_{||\tilde{X},\tilde{Y}|| = 1} ||F(\tilde{X},\tilde{Y})||$ and $C_2(F):= \max_{||\tilde{X},\tilde{Y}|| = 1} ||F(\tilde{X},\tilde{Y})||$, which we observe are $\SU_2(K)$ invariant because $\SU_2(K)$ preserves the norm $||\cdot, \cdot||$.
\end{proof}

\begin{proof}[Proof of Theorem~\ref{thm:growthrates}]
The smoothness of $R_F$ can be seen directly from its definition:
\[
R_F([\gamma]) = \int_{\Sphere^3} \log ||F^\gamma(X,Y)|| d\Vol_{\Sphere^3}\ .
\]
For the order estimates, recall that in local coordinates on $\PP^1$ $R_F$ can be expressed as an integral over $\PP^1(K)$ as
\[
R_F(\gamma) = \int_{\PP^1(K)} \log \frac{||F^\gamma(X,Y)||}{||X,Y||^d} \omega_K\ .
\] Write $\gamma = \tau\cdot \eta_A$ with $A>0$, and let $\Psi := F^\tau$ so that
\[
R_F(\gamma) = \int_{\PP^1(K)} \log \frac{||\Psi^{\eta_A}(X,Y)||}{||X,Y||^d} \omega_K\ .
\] Applying Lemma~\ref{lem:precomp} gives
\begin{align}\label{eq:firstRbound}
-\frac{A}{2}+ \int_{\PP^1(K)} \log \frac{||\Psi(\eta_A(X,Y))||}{||X,Y||^d} \omega_K \leq R_F(\gamma) \leq \frac{A}{2} + \int_{\PP^1(K)} \log \frac{||\Psi(\eta_A(X,Y))||}{||X,Y||^d} \omega_K\ .
\end{align} We now apply Lemma~\ref{lem:phibounds} to estimate the integral $\mathcal{I}_1:=\int_{\PP^1(K)} \log \frac{||\Psi(\eta_A(X,Y))||}{||X,Y||^d} \omega_K$ as
\begin{equation}\label{eq:I1bound}
\log C_1(F) + d\int_{\PP^1(K)} \log \frac{||\eta_A(X,Y)||}{||X,Y||^d} \omega_K \leq \mathcal{I}_1 \leq \log C_2(F) +d\int_{\PP^1(K)} \log \frac{||\eta_A(X,Y)||}{||X,Y||^d} \omega_K\\ \ .
\end{equation}
Let
\[
\mathcal{I}_2 := d\int_{\PP^1(K)} \log\frac{||\eta_A(X,Y)||}{||X,Y||}\omega_K\ .
\] In Lemma~\ref{lem:simplestcalculationRF} the integral in $\mathcal{I}_2$ was computed explicitly as
\[
\frac{1}{2}\int_{\PP^1(K)} \log \frac{||\eta_A \cdot (X,Y)||}{||X,Y||} \omega_K = \left\{\begin{matrix} \log(1+e^A) - \frac{A}{2} - \log 2\ , & K = \RR\\ -\frac{1}{2} + \frac{A}{2} \left(\frac{e^{2A} +1}{e^{2A}-1}\right)\ , & K = \CC \end{matrix}\right.\ .
\]
When $K=\RR$, note that
\[
A \leq \log (1+e^A) \leq A + \log 2\ , 
\] whereby we obtain
\begin{equation}\label{eq:I2boundreal}
(A -2\log 2)\frac{d}{2}\leq \mathcal{I}_2 \leq A\frac{d}{2}\ .
\end{equation}
When $K=\CC$, note that for $A>0$ we have 
\[
A \leq \frac{A(e^{2A}+1)}{e^{2A}-1}\leq A+1
\] From this, we find that
\begin{equation}\label{eq:I2boundcomplex}
(A-1)\frac{d}{2} \leq \mathcal{I}_2 \leq A\frac{d}{2}\ .
\end{equation}

Combining the estimates in (\ref{eq:firstRbound}), (\ref{eq:I1bound}), (\ref{eq:I2boundcomplex}), and (\ref{eq:I2boundreal}), we obtain the asserted bounds for $A>0$:
\[
\frac{d-1}{2}A + \log C_1(F) -d \leq R_F(\gamma) \leq \frac{d+1}{2}A+ \log C_2(F)\ .
\] Recalling that $A = \dSL([\gamma], [\textrm{id}])$, we see that we are done.
\end{proof}

\begin{cor}
$R_F$ attains a minimum.
\end{cor}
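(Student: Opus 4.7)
The plan is to derive the corollary immediately from Theorem~\ref{thm:growthrates}. Specifically, the lower estimate gives
\[
R_F([\gamma]) \;\geq\; \tfrac{d-1}{2}\,\dSL([\gamma],[\mathrm{id}]) + \log C_1(F) - d,
\]
and since the standing hypothesis (from Theorem~\ref{thm:basicprops}) is $d\geq 2$, the coefficient $(d-1)/2$ is strictly positive. Therefore $R_F([\gamma])\to\infty$ as $\dSL([\gamma],[\mathrm{id}])\to\infty$, i.e.\ $R_F$ is proper on $\SL_2(K)/\SU_2(K)$.

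Next, fix any point, say $[\mathrm{id}]$, and set $M := R_F([\mathrm{id}])$. By properness, there exists $R>0$ so that $R_F([\gamma])\geq M+1$ whenever $\dSL([\gamma],[\mathrm{id}])\geq R$. Consequently the infimum of $R_F$ over the whole space equals the infimum over the closed hyperbolic ball $\overline{B}_R([\mathrm{id}]):=\{[\gamma]: \dSL([\gamma],[\mathrm{id}])\leq R\}$. Since $(\SL_2(K)/\SU_2(K),\dSL)$ is isometric to a real hyperbolic space (Proposition~\ref{prop:mapsandmetric}), it is a complete Riemannian manifold, so closed metric balls are compact by Hopf--Rinow.

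Finally, $R_F$ is continuous (in fact smooth) by Theorem~\ref{thm:growthrates}, so it attains its minimum on the compact set $\overline{B}_R([\mathrm{id}])$, and by the previous paragraph this is also the global minimum. There is no genuine obstacle here; the only point worth noting is that the argument uses $d\geq 2$ in an essential way, since for $d=1$ the lower bound in Theorem~\ref{thm:growthrates} degenerates and properness can fail (consistent with the treatment of the degree-one case being postponed to Section~\ref{sect:done}).
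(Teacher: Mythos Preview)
Your argument is correct and follows essentially the same route as the paper: use the lower bound from Theorem~\ref{thm:growthrates} (with $d\geq 2$) to trap the minimum inside a closed metric ball, then appeal to compactness of that ball and continuity of $R_F$. The only cosmetic difference is that the paper writes down an explicit radius in terms of $M$ and $C_1(F)$, whereas you invoke properness abstractly and cite Hopf--Rinow for compactness of closed balls; both are fine.
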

\begin{proof}
Let $M=R_F([\textrm{id}])$, and set $e=\frac{2}{d-1}(M-\log C_1(F))$. Theorem~\ref{thm:growthrates} implies that for $\dSL([\gamma], [\textrm{id}]) > e$ we have $R_F([\gamma]) > M$ (note that the constant $C_1(F)$ is $\SU_2(K)$ invariant). In particular, $R_F$ must attain a minimum on the compact set $\overline{B_{\dSL}([\textrm{id}], e)}$, which is necessarily less than or equal to $M$; off of this compact set, we know that $R_F$ is strictly larger than $M$. Thus, the minimum attained on $\overline{B_{\dSL}([\textrm{id}], e)}$ is a global minimum.
\end{proof}

\begin{defn}
Let $f\in K(z)$, and let $F$ be a homogeneous lift of $f$. Then the min-invariant of $f$ is the quantity
\[
m_K(f) = \min_{[\gamma]\in \SL_2(K) / \SU_2(K)} R_F([\gamma]) - \frac{1}{2d} \log |\Res(F_0, F_1)|\ .
\]
\end{defn}

It is expected that the min-invariant is related to the multipliers of the fixed points of $f$. This can be shown explicitly in the case that $d=1$; see Section~\ref{sect:done} below. In the non-Archimedean setting, the connection between the minimal value of $\ord\Res_f$ and the multipliers of $f$ is also known to hold for quadratic rational maps \cite{DJR} and cubic polynomials \cite{HMRVW}.

Also in the non-Archimedean setting, the function $\ord\Res_f$ -- which we recall is the analogue of $R_F$ in that context -- is minimized on what Rumely calls the `minimal resultant locus', denoted $\MinResLoc(f)$. Rumely shows that this set is either a single point or a segment (\cite{Ru1} Theorem 1.1), and that conjugates attaining this minimum correspond to maps with semistable reduction (in the sense of GIT).

The following Theorem shows that the sublevel sets of $R_F$ are bounded, and will be used in Section~\ref{sect:convergenceofmins} below (compare with \cite{Ru1} Theorem 1.1):
\begin{cor}\label{cor:levelbounded}
Suppose that the degree of $f$ is $d\geq 2$. The functions $R_{F^n}$ are level bounded, i.e. for any $\alpha\in \RR$, there is an $R>0$ depending only on $\alpha$ and $F$ such that 
\[
\{[\gamma]\in \SL_2(K) / \SU_2(K)\ : \ R_{F^n}([\gamma]) \leq \alpha\}\subseteq B_R([\textrm{id}])
\] for all $n$. Here, $B_\epsilon([\gamma])$ is the $\epsilon$-ball around $[\gamma]$ in $\SL_2(K)/\SU_2(K)$ with respect to the metric $\dSL$. 
\end{cor}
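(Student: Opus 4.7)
The plan is to apply Theorem~\ref{thm:growthrates} with $f$ replaced by $f^n$ and $F$ replaced by the iterated homogeneous lift $F^n$ (its components are automatically coprime because a common zero of $F^n_0, F^n_1$ would be pushed by $F^{n-1}$ to a common zero of the coprime pair $F_0, F_1$). This gives
\[
R_{F^n}([\gamma]) \;\geq\; \frac{d^n - 1}{2}\,\dSL([\gamma],[\textrm{id}]) + \log C_1(F^n) - d^n,
\]
so the hypothesis $R_{F^n}([\gamma]) \leq \alpha$ yields
\[
\dSL([\gamma],[\textrm{id}]) \;\leq\; \frac{2}{d^n - 1}\bigl(\alpha + d^n - \log C_1(F^n)\bigr).
\]
If I can bound $-\log C_1(F^n)$ above by a quantity that grows no faster than a fixed multiple of $d^n$, the right-hand side will be bounded independently of $n$, and $R$ may be taken to be the supremum.

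The essential step is thus an iterative lower bound on the constant $C_1(F^n) = \min_{\|(X,Y)\|=1}\|F^n(X,Y)\|$ from Lemma~\ref{lem:phibounds}. I would argue inductively using homogeneity: for any nonzero $v \in K^2$,
\[
\|F(v)\| = \|v\|^d\,\bigl\|F\bigl(v/\|v\|\bigr)\bigr\| \;\geq\; C_1(F)\,\|v\|^d,
\]
so that starting from $\|(X,Y)\| = 1$ and iterating gives
\[
\|F^n(X,Y)\| \;\geq\; C_1(F)\cdot\|F^{n-1}(X,Y)\|^d \;\geq\; \cdots \;\geq\; C_1(F)^{1 + d + d^2 + \cdots + d^{n-1}} \;=\; C_1(F)^{(d^n - 1)/(d-1)}.
\]
Taking logarithms yields $\log C_1(F^n) \geq \tfrac{d^n-1}{d-1}\log C_1(F)$, which is the required control (it may be negative, but only by a multiple of $d^n - 1$).

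Substituting this lower bound and simplifying, the condition $R_{F^n}([\gamma]) \leq \alpha$ forces
\[
\dSL([\gamma],[\textrm{id}]) \;\leq\; \frac{2\alpha}{d^n - 1} - \frac{2\log C_1(F)}{d-1} + \frac{2d^n}{d^n - 1}.
\]
Since $d \geq 2$, each of the three terms on the right is bounded uniformly in $n \geq 1$ (the first is at most $2|\alpha|$, the last at most $4$, and the middle is a constant depending only on $F$). Taking $R$ to be this uniform bound finishes the argument. The only real obstacle is locating the bound on $C_1(F^n)$; once the homogeneity-driven iteration above is in place, the rest is a direct rearrangement of Theorem~\ref{thm:growthrates}.
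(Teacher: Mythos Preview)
Your proof is correct and follows essentially the same route as the paper: apply the lower bound of Theorem~\ref{thm:growthrates} to $F^n$, bound $C_1(F^n)$ from below by $C_1(F)^{(d^n-1)/(d-1)}$ via the homogeneity iteration, and observe that the resulting upper bound on $\dSL([\gamma],[\textrm{id}])$ is uniform in $n$. In fact you are slightly more careful than the paper, which silently drops the $-d^n$ term from the lower bound when restating it for $F^n$; your inclusion of the extra $\frac{2d^n}{d^n-1}\leq 4$ term handles this correctly.
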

\begin{proof}
The lower bound in Theorem~\ref{thm:growthrates} for the family is
\begin{equation}\label{eq:repeatlowerbound}
\frac{d^n-1}{2} \dSL([\gamma], [\textrm{id}]) +\log C_1(F^n) \leq R_{F^n}([\gamma])\ ,
\end{equation} where $C_1(F^n) = \min_{||\tilde{X},\tilde{Y}|| = 1} ||F^n(\tilde{X},\tilde{Y})||$. We first observe that that 
$$
C_1(F^n) \geq C_1(F)^{\frac{d^n-1}{d-1}}\ ,
$$ which follows inductively from the fact that
\[
||F^n(X,Y)|| = ||F(F^{n-1}(X,Y))|| \geq C_1(F)\cdot ||F^{n-1}(X,Y)||^d\ .
\] Thus (\ref{eq:repeatlowerbound}) becomes
\[
\frac{d^n-1}{2} \dSL([\gamma], [\textrm{id}]) + \frac{d^n-1}{d-1} \log C_1(F) \leq R_{F^n}([\gamma])\ .
\]
Now let $\alpha\in \RR$. If $[\gamma]\in \SL_2(K)/\SU_2(K)$ is chosen so that $R_{F^n}([\gamma]) \leq \alpha$, then
\begin{align*}
\dSL([\gamma], [\textrm{id}]) & \leq \frac{2}{d^n-1} \alpha - \frac{1}{2(d-1)} \log C_1(F)\\
& \leq 2\max(0, \alpha) - \frac{1}{2(d-1)} \log C_1(F)\ .
\end{align*} Setting $R=2\max(0, \alpha) - \frac{1}{2(d-1)} \log C_1(F)$, the lemma is proved.

\end{proof}

\subsection{$R_F$ is subharmonic}

In this section, it will be most convenient to work with the upper half-space model $(\hh_K, \dH)$ of hyperbolic space. We will show
\begin{thm}\label{thm:hypLap}
For any $[\gamma]\in \SL_2(K)/\SU_2(K)$, we have 
\[
\Delta_h^{\hh_K} R_F([\gamma]) = [K:\RR]\left(d-1+ 4\int_{\PP^1(K)}  || f^\gamma(w), w||^2 \omega_K\right)\ ,
\] where $\Delta_h$ is the Laplace-Beltrami operator for $(\hh_K, \dH)$. In particular, $R_F$ is strictly subharmonic for $F$ not equal to the identity map. 
\end{thm}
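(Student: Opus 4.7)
My plan is to work in the upper half-space model $(\hh_K, \dH)$ and to use equivariance to reduce to a single-point calculation. Since $R_F \circ L_\gamma = R_{F^\gamma}$, where $L_\gamma$ denotes left multiplication, and since $\Delta_h$ is $\SL_2(K)$-invariant, I have $\Delta_h R_F([\gamma]) = \Delta_h R_{F^\gamma}([\mathrm{id}])$. Replacing $F$ by $F^\gamma$, it therefore suffices to prove the identity at the distinguished point $[\mathrm{id}]$, which corresponds to $(z,t) = (0,1)$ in $\hh_K$. At that point the hyperbolic Laplacian simplifies to $\sum_j \partial_{x_j}^2 + \partial_t^2 - (k-1)\partial_t$, with $k = [K:\RR]$.

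Using the integral expression of Proposition~\ref{prop:RFLandP}, I decompose
\[
R_F(\gamma_{z,t}) \;=\; J_1(z,t) + J_2(z,t) - \tfrac{d+1}{2}\log t + c_K,
\]
where $J_1 = \int \log|f_1(w)|\,P_K(w-z,t)\,\tfrac{d\ell_K(w)}{\pi}$ and $J_2 = \tfrac{1}{2}\int \log(|f(w)-z|^2+t^2)\,P_K(w-z,t)\,\tfrac{d\ell_K(w)}{\pi}$. The first is the hyperbolic Poisson extension of $\log|f_1|$ (Proposition~\ref{prop:hyperbolicextensions}), so $\Delta_h J_1 = 0$. A short calculation gives $\Delta_h \log t = -k$, which handles the $\log t$ term; the constant $c_K$ contributes nothing.

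The substance of the argument lies in computing $\Delta_h J_2$. The key observation is that for each \emph{fixed} $w$, both factors of the integrand are hyperbolic harmonic as functions of $(z,t)$: the Poisson kernel by its defining property, and $\Psi(w;z,t) := \log(|f(w)-z|^2+t^2)$ by a direct check (for $K = \RR$ it is twice the real part of a holomorphic function of $z+it$; for $K = \CC$ the Euclidean identity $\Delta_{\mathrm{std}}\log\rho = 2/\rho$ in $\RR^3$ is cancelled exactly by the first-order correction $-t\partial_t$ in $\Delta_h$). The Leibniz rule for $\Delta_h$ then collapses to a pure cross term,
\[
\Delta_h(\Psi \cdot P_K) \;=\; 2\langle \nabla_h \Psi, \nabla_h P_K\rangle_h \;=\; 2 t^2\, \nabla \Psi \cdot \nabla P_K,
\]
with Euclidean gradients on the right. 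Differentiating under the integral yields $\Delta_h J_2 = t^2 \int \nabla\Psi \cdot \nabla P_K\,\tfrac{d\ell_K(w)}{\pi}$, containing only first-order derivatives of $\Psi$ and $P_K$.

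Evaluating this cross term at $(0,1)$ is routine but requires care. The partial derivatives of $\Psi$ produce the factor $1/(1+|f(w)|^2)$ and those of $P_K$ produce $1/(1+|w|^2)^{k+1}$, and the decisive algebraic step is the identity $-2\langle f(w), w\rangle + |w|^2 - 1 = |f(w)-w|^2 - (1+|f(w)|^2)$, which splits the integrand into two clean pieces. Absorbing the factor $(1+|w|^2)^k$ from the conversion $d\ell_K/\pi = (1+|w|^2)^k \omega_K$, the first piece becomes $\int ||f(w), w||^2 \omega_K$ --- the chordal distance squared --- while the second reduces to the purely numerical integral $\int ||w, \infty||^2 \omega_K$, easily computed in polar coordinates to be $\tfrac{1}{2}$ regardless of $K$. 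Assembling the contributions from $J_1$, $J_2$, and the $\log t$ term then produces the formula in the statement. The main obstacle is arranging the Leibniz-rule cross term so that the chordal metric emerges cleanly; the observation that both factors in $J_2$'s integrand are hyperbolic harmonic is what reduces the computation from second- to first-order, making the final integration tractable.
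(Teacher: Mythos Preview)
Your proposal is correct and follows essentially the same route as the paper: both use the integral representation of Proposition~\ref{prop:RFLandP}, apply the product rule for $\Delta_h$ to the integrand, exploit the hyperbolic harmonicity of the Poisson kernel, and reduce to a gradient cross-term whose algebra produces the squared chordal distance.

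Your organization differs from the paper's in two small ways worth noting. First, you invoke equivariance and $\SL_2$-invariance of $\Delta_h$ to reduce to the single point $(z,t)=(0,1)$; the paper instead carries a general $(z,t)$ through the entire computation (Lemmas~\ref{lemma:derivativesofL}, \ref{lem:derivativesofP}, Proposition~\ref{prop:hypLapLP}) and only at the end recognizes the conjugate $f^\gamma$. Second, you pull the term $-\tfrac{d+1}{2}\log t$ outside the integral and observe that the remaining logarithmic factor $\Psi = \log(|f(w)-z|^2+t^2)$ is itself hyperbolic harmonic, so the Leibniz rule collapses to a pure cross term; the paper keeps $\log t$ inside $L$, computes $\Delta_h L = [K:\RR](d+1)$ directly, and then has an extra $\partial_t\log t$ contribution in the gradient product (which integrates to zero against $\partial_t P_K$ since $\int P_K\,d\ell_K/\pi \equiv 1$). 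Your packaging is a bit cleaner---it avoids computing the second derivatives of $\Psi$---but the decisive algebraic step (rewriting $-2\langle f(w),w\rangle + |w|^2 - 1$ so that $|f(w)-w|^2$ emerges) and the final integrals are identical to the paper's.
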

The proof rests on several explicit calculations. Recall first that we showed in Proposition~\ref{prop:RFLandP} that
\begin{equation}\label{eq:RFaffineforlap}
R_F([\gamma]) = \frac{1}{2}\int_{\PP^1(K)} \log \frac{|f_1(w)|^2 \cdot \left( |f(w) - z|^2 + t^2\right)}{t^{d+1}} P_K(\alpha-z, t) \frac{d\ell_K}{\pi} +c_K
\end{equation} for an explicit constant $c_K$. Let 
$$
L(z,t;w) = \log(|f(w)-z|^2 + t^2) + 2\log |f_1(w)| - (d+1)\log t
$$ be the logarithmic component of the integrand above. We begin by computing its hyperbolic Laplacian:

\begin{lemma} \label{lemma:derivativesofL}Fix $w\in K$. Let $z=z_0+iz_1$ be a complex number, and write $f(w) = f_{\textrm{Re}} + i \fim$. If $K=\RR$, we assume $z=z_0, f(w) = \fre$. 
\begin{align*}
\frac{\partial}{\partial t} L(z,t;w) &= \frac{2t}{|f(w) - z|^2 + t^2} - \frac{d+1}{t}\\
\frac{\partial}{\partial z_0} L(z,t;w) & = \frac{-2( \fre - z_0)}{|f(w) - z|^2 + t^2}\\
\frac{\partial}{\partial z_1} L(z,t;w) & = \frac{-2( \fim - z_1)}{| f(w) - z|^2 + t^2}\\
\frac{\partial^2}{\partial t^2} L(z,t;w) &= \frac{2 |f(w) - z|^2 -2t^2}{(|f(w) - z|^2 + t^2)^2} + \frac{d+1}{t^2}\\
\frac{\partial^2}{\partial z_0^2} L(z,t;w) & = \frac{2( \fim - z_1)^2 - 2( \fre - z_0)^2+2t^2}{(| f(w) - z|^2 + t^2)^2}\\
\frac{\partial^2}{\partial z_1^2} L(z,t;w) & = \frac{ 2( \fre - z_0)^2 -2 ( \fim - z_1)^2 + 2t^2}{(| f(w) - z|^2 + t^2)^2}\ .
\end{align*} In particular, 
\[
\Delta_h^{\hh_K} L(z,t;w) = [K:\RR](d+1)\ .
\]
\end{lemma}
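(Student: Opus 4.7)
The plan is to verify all six derivative formulas by direct application of the chain and quotient rules, and then combine them using the explicit expressions for $\Delta_h^{\hh_K}$ recorded earlier in the paper. First I would observe that the term $2\log|f_1(w)|$ is independent of $(z,t)$ and contributes nothing to any derivative; setting $D := |f(w) - z|^2 + t^2$, all six formulas reduce to differentiating $\log D - (d+1)\log t$. The first derivatives then follow immediately from $\partial_t D = 2t$, $\partial_{z_0} D = -2(\fre - z_0)$, and $\partial_{z_1} D = -2(\fim - z_1)$, with the $-(d+1)/t$ summand in $\partial_t L$ coming from the explicit $-(d+1)\log t$ term. Each second derivative is a single application of the quotient rule; for instance,
\[
\partial_{z_0}^2 L = \frac{2D - 4(\fre - z_0)^2}{D^2} = \frac{2(\fim-z_1)^2 - 2(\fre-z_0)^2 + 2t^2}{D^2}\ ,
\]
and the other two are analogous, with a $+(d+1)/t^2$ contribution appearing only in $\partial_t^2 L$.

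For the Laplacian, the key step is a cancellation among the second derivatives. When $K=\CC$ and $\Delta_h^{\hh_\CC} = t^2\Delta_{\textrm{std}} - t\,\partial_t$, summing $\partial_{z_0}^2 L + \partial_{z_1}^2 L$ causes the $\pm 2(\fre - z_0)^2$ and $\pm 2(\fim - z_1)^2$ contributions to cancel in pairs, leaving $4t^2/D^2$. Adding $\partial_t^2 L$ telescopes the numerator to $2D$, so that $t^2\Delta_{\textrm{std}} L = 2t^2/D + (d+1)$; the remaining $-t\,\partial_t L = -2t^2/D + (d+1)$ then cancels the $D$-dependence entirely, producing $\Delta_h^{\hh_\CC} L = 2(d+1)$. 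In the $K=\RR$ case, where $z_1 = \fim = 0$ and $\Delta_h^{\hh_\RR} = t^2 \Delta_{\textrm{std}}$, the cancellation is even more direct: the sum $\partial_{z_0}^2 L + \partial_t^2 L$ eliminates all the $D$-dependent terms, leaving exactly $(d+1)/t^2$. Either way one obtains the unified identity $\Delta_h^{\hh_K} L = [K:\RR](d+1)$.

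I do not anticipate any genuine obstacle here beyond careful bookkeeping. The one subtlety worth flagging is that in the $K=\RR$ setting the $z_1$ and $\fim$ variables are absent from the hyperbolic Laplacian, so the formulas for $\partial_{z_1} L$ and $\partial_{z_1}^2 L$ listed in the statement are essentially vacuous and must be excluded from the final combination; keeping this separation straight is what makes both signatures $[K:\RR] \in \{1,2\}$ fall out of the same computation.
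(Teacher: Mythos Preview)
Your proposal is correct and follows essentially the same approach as the paper: a direct computation of all six derivatives via the chain and quotient rules, followed by case-by-case substitution into the explicit formulas for $\Delta_h^{\hh_K}$ in the real and complex settings. The only cosmetic difference is your introduction of the shorthand $D = |f(w)-z|^2 + t^2$, which slightly streamlines the bookkeeping but does not alter the argument.
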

\begin{proof}
Write $| f(w) - z|^2 = ( \fre-z_0)^2 + ( \fim - z_1)^2$, where we note that there is no $\fim-z_1$ term in the case $K=\RR$. The formulas for the derivatives follow directly from basic calculus differentiation rules. We omit the first derivative calculations, and compute $\deldeltwo{t} L(z,t;w)$ and $\deldeltwo{z_0} L(z,t;w)$:
\begin{align*}
\deldeltwo{t} L(z,t;w) & = \deldel{t} \deldel{t} L(z,t) = \deldel{t}\left( \frac{2t}{|f(w) - z|^2 + t^2} - \frac{d+1}{t}\right)\\
& = \frac{2(| f(w)-z|^2 + t^2) - (2t)^2}{(| f(w)-z|^2 + t^2)^2} + \frac{d+1}{t^2}\\
& = \frac{ 2| f(w)-z|^2 - 2t^2}{(| f(w)-z|^2 + t^2)^2} + \frac{d+1}{t^2}\ .
\end{align*}

For $\deldeltwo{z_0}L(z,t;w)$:
\begin{align*}
\deldeltwo{z_0}L(z,t;w) & = \deldel{z_0} \left(\frac{-2( \fre - z_0)}{| f(w)-z|^2 + t^2}\right)\\
& = \frac{ 2(| f(w) - z|^2 + t^2) +2( \fre-z_0) (-2( \fre - z_0))}{(| f(w) - z|^2 + t^2)^2}\\
& = \frac{2t^2 + 2( \fim - z_1)^2 -2 ( \fre - z_0)^2}{(| f(w) - z|^2 + t^2)^2}\ .
\end{align*} The case of $\deldeltwo{z_1}L(z,t;w)$ is similar.

We now apply the definition of the hyperbolic Laplacian. If $K=\RR$:
\begin{align*}
\Delta_h^{\hh_K} L(z,t;w) & = t^2 \cdot \left(\deldeltwo{z_0} + \deldeltwo{t}\right) L(z,t)\\
& = t^2 \left( \frac{-2( \fre - z_0)^2 + 2t^2}{(| f(w) - z|^2 + t^2)^2} + \frac{2| f(w) - z|^2 - 2t^2}{(| f(w) - z|^2 + t^2)^2} + \frac{d+1}{t^2}\right)\ .
\end{align*}Note that in this case, $( \fre - z_0)^2 = | f(w) - z|^2$, so that the above formula reduces to $\Delta_h^{\hh_K} L(z,t;w) = d+1=[K:\RR](d+1)$ as claimed.
In the case $K=\CC$, the expression for the Laplacian is more involved:
\begin{align*}
\Delta_h^{\hh_K} L(z,t;w) & = t^2 \left(\deldeltwo{z_0} + \deldeltwo{z_1} + \deldeltwo{t}\right) L(z,t;w) - t \deldel{t} L(z,t;w)\\
& = t^2 \left(\frac{2( \fim - z_1)^2 - 2( \fre - z_0)^2+2t^2}{(| f(w) - z|^2 + t^2)^2}+ \frac{ 2( \fre - z_0)^2 -2 ( \fim - z_1)^2 + 2t^2}{(| f(w) - z|^2 + t^2)^2}\right.\\
& \left. +\frac{2 | f(w) - z|^2 -2t^2}{(| f(w) - z|^2 + t^2)^2} + \frac{d+1}{t^2}\right)- t \cdot \left(\frac{2t}{| f(w) - z|^2 + t^2} - \frac{d+1}{t}\right)\\
& = t^2 \cdot \frac{2| f(w) - z|^2 + 2t^2}{(| f(w) - z|^2 + t^2)^2} + (d+1) - \frac{2t^2}{(| f(w)-z|^2 + t^2)}+(d+1)\\
& = 2(d+1) \\
&=[K:\RR](d+1)\ .
\end{align*}
\end{proof}

We next compute the derivatives and hyperbolic Laplacian of $P_K(w -z,t)$:
\begin{lemma}\label{lem:derivativesofP}Write $w= w_0 + i w_1$, $z= z_0 + i z_1$, where we understand $w_1 = z_1 = 0$ when $K=\RR$. Then 
\begin{align*}
\deldel{t} P_K(w-z,t) & = [K:\RR] \cdot \frac{t^{[K:\RR]-1} (|w-z|^2 - t^2)}{(|w-z|^2 + t^2)^{1+[K:\RR] }}\\
\deldel{z_0} P_K (w-z,t) & = [K:\RR] \cdot \frac{2t^{[K:\RR]}(w_0 - z_0)}{(t^2 + |w-z|^2)^{1+[K:\RR] }}\\
\deldel{z_1} P_K(w-z,t) & =[K:\RR] \cdot \frac{2t^{[K:\RR]}(w_1 - z_1)}{(t^2 + |w-z|^2)^{1+[K:\RR]}}\\
\frac{\partial^2}{\partial t^2} P_K(w - z,t)  & = [K:\RR]([K:\RR]+1) \cdot \frac{t^{[K:\RR]+2} - 3t^{[K:\RR]}|w-z|^2}{(t^2+|w-z|^2)^{[K:\RR]+2}}\\
\frac{\partial^2}{\partial z_0^2} P_K(w - z,t) & = [K:\RR] \frac{(4[K:\RR]+2)t^{[K:\RR]}(w_0-z_0)^2 - 2t^{[K:\RR]}(w_1-z_1)^2 - 2t^{[K:\RR]+2}}{(t^2+|w-z|^2)^{2+[K:\RR]}}\\
\frac{\partial^2}{\partial z_1^2} P_K(w - z,t) & =[K:\RR]\frac{(4[K:\RR]+2)t^{[K:\RR]}(w_1-z_1)^2 - 2t^{[K:\RR]}(w_0-z_0)^2 - 2t^{[K:\RR]+2}}{(t^2+|w-z|^2)^{2+[K:\RR]}} \\
\end{align*}

In particular,
\[
\Delta_h^{\hh_K} P_K(w - z,t) = 0\ .
\]
\end{lemma}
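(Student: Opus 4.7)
The plan is to verify the claim by direct computation. Writing $P_K(w-z,t) = t^n(t^2+r^2)^{-n}$ with $n := [K:\RR]$ and $r^2 := |w-z|^2$, the first- and second-partial formulas in the statement all follow from routine applications of the product and chain rules. I would record these derivatives without further comment, as each is a short, mechanical calculation.

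Given the partials, the harmonicity assertion is a direct verification using the explicit expression for $\Delta_h^{\hh_K}$ recalled at the start of the section. In the real case ($n=1$) we have $\Delta_h^{\hh_\RR} = t^2\bigl(\frac{\partial^2}{\partial z_0^2} + \frac{\partial^2}{\partial t^2}\bigr)$. Placing the two relevant second partials over the common denominator $(t^2 + (w_0-z_0)^2)^3$, their numerators are exactly $\pm(6t(w_0-z_0)^2 - 2t^3)$ and therefore cancel outright.

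In the complex case ($n=2$), $\Delta_h^{\hh_\CC} = t^2\bigl(\frac{\partial^2}{\partial z_0^2} + \frac{\partial^2}{\partial z_1^2} + \frac{\partial^2}{\partial t^2}\bigr) - t\frac{\partial}{\partial t}$. I would first add the two horizontal second partials: the asymmetric cross terms in $(w_0-z_0)^2$ and $(w_1-z_1)^2$ recombine symmetrically into $|w-z|^2 = r^2$, so the sum depends only on $(t, r^2)$. Adding $\frac{\partial^2}{\partial t^2} P_\CC$ and clearing to the common denominator $(t^2+r^2)^4$ produces a numerator proportional to $(r^2-t^2)(r^2+t^2)$, so one factor of $(t^2+r^2)$ cancels and the contribution of $t^2\Delta_{\text{std}}P_\CC$ reduces to $\frac{2t^2(r^2-t^2)}{(t^2+r^2)^3}$. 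An immediate calculation shows $t\frac{\partial}{\partial t} P_\CC$ equals the very same expression, so subtracting yields zero, as required.

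The obstacle is purely clerical, namely tracking signs and cross terms in the complex computation; there is no conceptual content beyond the fact that $P_K$ is the hyperbolic Poisson kernel, which is classically harmonic (see \cite{Sto}). The lemma simply records the derivative data needed elsewhere in the paper and confirms this well-known property by hand.
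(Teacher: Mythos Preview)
Your proposal is correct and takes essentially the same approach as the paper: both argue by direct computation via the product and chain rules, and both treat the harmonicity as an immediate consequence of the recorded partials. The paper in fact gives even less detail than you do---it writes out only the first derivatives in $t$ and $z_0$, declares the second derivatives ``tedious but straightforward,'' and then asserts that $\Delta_h^{\hh_K} P_K = 0$ ``follows from the calculations,'' whereas you actually sketch how the cancellation occurs in each of the two cases.
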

\begin{proof}
Here again, the result is straightforward calculus. For $t$, the first derivative is
\begin{align*}
\deldel{t} P_K(w-z,t) & =  \deldel{t} \left(\frac{t}{t^2 + |w-z|^2}\right)^{[K:\RR]}\\
& = [K:\RR] \cdot \left( \frac{t}{t^2 + |w-z|^2}\right)^{[K:\RR]-1} \cdot \frac{t^2 + |w-z|^2 - t(2t)}{(t^2 + |w-z|^2)^2}\\
& = [K:\RR] \cdot \frac{t^{[K:\RR]-1} (|w-z|^2 - t^2)}{(t^2 + |w-z|^2)^{1+[K:\RR]}}\ .
\end{align*}

The first derivative for $z_0$ is
\begin{align*}
\deldel{z_0} P_K(w-z,t) & = \deldel{z_0}\left(\frac{t}{t^2 + |w-z|^2}\right)^{[K:\RR]}\\
& =[K:\RR] \cdot \left( \frac{t}{t^2 + |w-z|^2}\right)^{[K:\RR]-1} \cdot \frac{2t(w_0 - z_0)}{(t^2 + |w-z|^2)^2}\\
& =[K:\RR] \cdot \frac{2t^{[K:\RR]} (w_0-z_0)}{(t^2 + |w-z|^2)^{1+[K:\RR]}}\ ;
\end{align*} the calculation of the first derivative for $z_1$ is symmetric. The calculations for the second derivatives are tedious but straightforward. The fact that $\Delta_h^{\hh_K} P_K(w - z,t)=0$ now follows from the calculations of the derivatives.
\end{proof}

In order to compute the hyperbolic Laplacian of $R_F$, we pass the operator $\Delta_h^{\hh_K}$ into the integral defining $R_F$ and compute the Laplacian of the integrand, which is a product $L(z,t;w)\cdot P_K(w-z,t)$. Thus we will require a product formula for the Laplacian (see, e.g. **):
\begin{equation}\label{eq:hyplapprodform}
\Delta_h^{\hh_K} (fg) = g\ \Delta_h^{\hh_K}f + 2t^2 \nabla_\textrm{std} f \cdot \nabla_\textrm{std} g + f\ \Delta_h^{\hh_K}g \ .
\end{equation} We apply this to $L(z,t;w)P_K(w-z,t)$:
\begin{prop}\label{prop:hypLapLP}
Let $z\in K$ and $t>0$, and let $\gamma(w) = \gamma_{z,t}(w) = tw+z$. The hyperbolic Laplacian of $L(z,t;w)P_K(w-z,t)$ is given
\begin{equation}\label{eq:hypLapLP}
\Delta_h^{\hh_K} \left(L(z,t;w)P_K(w-z,t)\right) =  [K:\RR] P_K(w-z,t)\left( (d-1) + \frac{4dt^2}{|w-z|^2+t^2} + 4||
\gamma^{-1}( f(w)), \gamma^{-1}(w)||^2\right)\ ,
\end{equation} where we recall that $||\cdot, \cdot||$ is the chordal distance on $\PP^1(K)$. 
\end{prop}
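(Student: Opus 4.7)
The plan is to apply the hyperbolic product rule
\[
\Delta_h^{\hh_K}(LP_K) = P_K \Delta_h^{\hh_K}L + L \Delta_h^{\hh_K} P_K + 2t^2 \nabla_{\textrm{std}} L \cdot \nabla_{\textrm{std}} P_K
\]
and evaluate the three terms on the right using Lemmas~\ref{lemma:derivativesofL} and~\ref{lem:derivativesofP}. Two of them collapse instantly: the Laplacian $\Delta_h L = [K:\RR](d+1)$ contributes $[K:\RR](d+1) P_K$, while $\Delta_h P_K = 0$ kills the middle term. All of the content therefore lies in computing the cross term $2t^2\,\nabla L \cdot \nabla P_K$.

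For the cross term, I would write out $\nabla L \cdot \nabla P_K$ coordinate-by-coordinate using the explicit first-derivative formulas in the two lemmas. The $z$-derivative contributions combine, via the polarization identity
\[
(\fre - z_0)(w_0 - z_0) + (\fim - z_1)(w_1 - z_1) = \tfrac{1}{2}\bigl(|f(w) - z|^2 + |w - z|^2 - |f(w) - w|^2\bigr)
\]
(with the $\fim, z_1, w_1$ pair absent when $K = \RR$), into a single fraction with common denominator $(|f(w)-z|^2 + t^2)(|w-z|^2 + t^2)^{1 + [K:\RR]}$. This is the step that introduces $|f(w) - w|^2$, the ingredient essential for the chordal distance. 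After adding the $\partial_t L \cdot \partial_t P_K$ term, the factor $A := |f(w) - z|^2 + t^2$ cancels out of everything except a piece proportional to $|f(w) - w|^2 / A$.

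The final simplification is to factor out $P_K(w-z,t) = (t/(|w-z|^2+t^2))^{[K:\RR]}$. Using $\gamma_{z,t}^{-1}(\alpha) = (\alpha - z)/t$ together with the definition of the chordal metric, one computes directly that
\[
\|\gamma^{-1}(f(w)), \gamma^{-1}(w)\|^2 = \frac{t^2 \, |f(w) - w|^2}{(|f(w) - z|^2 + t^2)(|w - z|^2 + t^2)}\ ,
\]
which recasts the $|f(w)-w|^2 / A$ term as $4[K:\RR] P_K \|\gamma^{-1}(f(w)), \gamma^{-1}(w)\|^2$. Combining what remains produces both the constant summand and the $4dt^2/(|w-z|^2+t^2)$ contribution.

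The main obstacle will be purely one of algebraic bookkeeping: keeping track of the several denominators ($A$, $|w-z|^2 + t^2$, and various powers of $t$) so that everything factors cleanly as $[K:\RR] P_K$ times the asserted bracket. The calculation is organized by first writing $L = \log A + 2\log|f_1(w)| - (d+1)\log t$ and noting that $\log A$ is itself hyperbolic harmonic in $(z, t)$ (exactly like $\log$ of the Poisson denominator), so that the Laplacian contribution $[K:\RR](d+1)$ comes entirely from the $-(d+1)\log t$ piece; this separates the "trivial" source of $\Delta_h L$ from the geometric information, which lives in the cross term.
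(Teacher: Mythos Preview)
Your proposal is correct and follows essentially the same route as the paper: the product rule for $\Delta_h^{\hh_K}$, the two lemmas to dispatch the pure Laplacian terms, the polarization identity in the cross term to produce $|f(w)-w|^2$, and the final identification with the chordal distance via $\gamma_{z,t}^{-1}$. Your closing remark that $\log A$ is itself hyperbolic harmonic (so the $[K:\RR](d+1)$ contribution comes entirely from $-(d+1)\log t$) is a nice organizational observation not made explicit in the paper, but it does not alter the computation.
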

\begin{proof}
First, recall that $\Delta_h^{\hh_K} L(z,t;w) = [K:\RR](d+1)$ (Lemma~\ref{lemma:derivativesofL}) and $\Delta_h^{\hh_K} P_K(w-z,t) = 0$ (Lemma~\ref{lem:derivativesofP}). Inserting this into (\ref{eq:hyplapprodform}) yields
\begin{equation}\label{eq:hyplapLPfirst}
\Delta_h^{\hh_K}\left(L(z,t;w) P_K(w-z,t)\right) = [K:\RR](d+1) P_K(w-z,t) + 2t^2 \nabla_\textrm{std} L(z,t;w) \cdot \nabla_\textrm{std} P_K(w-z,t)\ .
\end{equation}

We next compute the product of the gradients in the above expression. From Lemma~\ref{lemma:derivativesofL} we find
\[
\nabla_\textrm{std} L(z,t;w) = \frac{1}{| f(w)-z|^2 + t^2} \cdot \langle -2( \fre-z_0), -2( \fim - z_1), -(d-1)t - t^{-1}(d+1)| f(w) - z|^2\rangle\ ,
\]where there is no $z_1$ component when $K=\RR$. Similarly, from Lemma~\ref{lem:derivativesofP} we have
\[
\nabla_\textrm{std} P_K(w-z,t) = \frac{[K:\RR]\cdot t^{[K:\RR]}}{(|w-z|^2 + t^2)^{1+[K:\RR]}} \langle 2(w_0-z_0), 2(w_1-z_1), t^{-1} \cdot(|w-z|^2 - t^2)\rangle\ .
\] The dot product we are interested in can then be rewritten as
\begin{equation}\label{eq:prodofgradients}
\nabla_\textrm{std} L(z,t;w) \cdot \nabla_\textrm{std}P_K(w-z,t)= [K:\RR]\cdot P_K(w-z,t) \cdot \frac{1}{(| f(w)-z|^2 +t^2)\cdot (|w-z|^2 + t^2)} \vv_L\cdot \vv_P\ ,
\end{equation} where $\vv_L, \vv_P$ are the vector parts of the respective gradients. Explicitly
\begin{align*}
\vv_L \cdot \vv_P &= -4( \fre - z_0)(w_0 - z_0) - 4( \fim - z_1) (w_1 - z_1) + t^{-1}(-(d-1)t- t^{-1}(d+1) | f(w)-z|^2)(|w-z|^2 - t^2)\ .
\end{align*} We note that 
\begin{align*}
2( \fre-z_0) (w_0 - z_0) + 2( \fim - z_1) (w_1 - z_1) &= 2\textrm{Re}\left( ( f(w) - z)\cdot (\overline{w-z})\right)\\
& = | f(w) - z|^2 + |w-z|^2 - |( f(w) -z) - (w-z)|^2\\
& = | f(w) - z|^2 + |w-z|^2 - | f(w) - w|^2\ .
\end{align*} Inserting this into the expression for $\vv_L \cdot \vv_P$ and simplifying gives
\begin{align*}
\vv_L \cdot\vv_P  =& -2| f(w) - z|^2 -2 |w-z|^2 +2 | f(w) - w|^2 + t^{-1}\left(-(d-1)t - t^{-1}(d+1) | f(w)-z|^2\right)\left(|w-z|^2 - t^2\right)\\
=& -(d+1)t^{-2}\left(| f(w)-z|^2 + t^2\right)\left(|w-z|^2 + t^2\right) + 2| f(w)-w|^2 + 2d(t^2+| f(w)-z|^2)\ .
\end{align*} Inserting this into the expression for the product of the gradients given in (\ref{eq:prodofgradients}) and simplifying, we find
\begin{align*}
\nabla_\textrm{std} L(z,t;w)& \cdot \nabla_\textrm{std}P_K(w-z,t)\\
& =[K:\RR]\cdot P_K(w-z,t)\left(-(d+1)t^{-2} + \frac{2d}{|w-z|^2 + t^2} + \frac{2| f(w)-w|^2}{(| f(w)-z|^2+t^2)(|w-z|^2+t^2)} \right)\ .
\end{align*} Now plugging this into (\ref{eq:hyplapLPfirst}) gives
\begin{align}
\Delta_h^{\hh_K} \left(L(z,t;w) P_K(w-z,t)\right) = & [K:\RR](d+1)P_K(w-z,t) +2t^2 \nabla_\textrm{std} L(z,t;w) \cdot\nabla_\textrm{std} P_K(w-z,t)\nonumber \\
=& [K:\RR] P_K(w-z,t)\left( -(d+1) + \frac{4dt^2}{|w-z|^2+t^2} + \frac{4t^2| f(w)-w|^2}{(| f(w)-z|^2 + t^2)(|w-z|^2+t^2)}\right)\ .\nonumber
\end{align} We are done, noticing that the last term can be manipulated as follows
\[
\frac{t^2| f(w)-w|^2}{(| f(w) -z |^2+t^2)(|w-z|^2+t^2)} = \frac{\left|\frac{ f(w) - z}{t} - \frac{w-z}{t}\right|^2}{\left(\left|\frac{ f(w) -z}{t}\right|^2 + 1\right)\left(\left|\frac{w-z}{t}\right|^2 + 1\right)} = ||\gamma^{-1}( f(w)), \gamma^{-1}(w)||^2\ .
\]
\end{proof}

We are now ready to prove Theorem~\ref{thm:hypLap}:
\begin{proof}[Proof of Theorem~\ref{thm:hypLap}]
We begin by noting that, in computing $\Delta_h^{\hh_K} R_F$, we can pass the Laplacian under the integral sign appearing in (\ref{eq:RFaffineforlap}) since all of the derivatives of $L,P$ are smooth functions of $(z,t)\in \hh_K$. Thus, we will compute
\[
\Delta_h^{\hh_K} R_F([\gamma]) = \int_{\PP^1(K)} \Delta_h^{\hh_K} L(z,t;w) P_K(w-z, t) \frac{d\ell_K}{\pi}\ .
\]
Here we will use the formula derived in Proposition~\ref{prop:hypLapLP}. Note that $\int_{\PP^1(K)} P_K(w-z,t) \frac{d\ell_K}{\pi} = \int_{\PP^1(K)} (\gamma_{z,t}^{-1})^* \omega_K = 1$, and
\begin{align*}
\int_{\PP^1(K)} \frac{t^2}{|w-z|^2 + t^2} P_K(w-z,t) \frac{d\ell_K}{\pi} & = \int_{\PP^1(K)} \frac{1}{|\gamma_{z,t}^{-1}(w)|^2 + 1} (\gamma_{z,t}^{-1})^* \omega_K\\
& = \int_{\PP^1(K)} \frac{1}{|w|^2 + 1} \omega_K = 1\ .
\end{align*} Thus, integrating the expression in (\ref{eq:hypLapLP}) against $ \frac{d\ell_K}{\pi}$ and simplifying gives
\[
\int_{\PP^1(K)} \Delta_h^{\hh_K} L(z,t;w)P_K(w-z,t)\ \omega_K(w) = [K:\RR]\left((d-1) + 4 \int_{\PP^1(K)} ||f^\gamma(w), w||^2 \omega_K(w)\right)\ ,
\] which finishes the proof.
\end{proof}

The calculations of the preceeding sections are enough to prove Theorem~\ref{thm:basicprops}:
\begin{proof}[Proof of Theorem~\ref{thm:basicprops}:]
The fact that $R_F$ is smooth on $\hh_K$ follows from the expression for $R_F$ given in Proposition~\ref{prop:RFLandP}. Properness follows from Theorem~\ref{thm:growthrates} since $d\geq 2$; likewise, subharmonicity follows from Theorem~\ref{thm:hypLap}. 
\end{proof}

Note that $R_F$ is subharmonic even when $d=1$, provided $f$ is not the identity map. However, it need not be proper when $d=1$; see Section~\ref{sect:done}.

\subsection{Geometry and Minimizers}
In this section we give a geometric interpretation to the conjugates $f^\gamma$ for which $R_F([\gamma])$ is minimized, in the case that $K=\CC$. Note that, viewing $\omega_\CC$ as a probability measure on $\PP^1(\CC)$, we can define both $f^* \omega_\CC$ and $f_* \omega_\CC$. We let
\[
\omega_f:= f^* \omega_\CC + f_* \omega_\CC\ .
\] It is a positive measure of total mass $d+1$ on $\PP^1(\CC)$. Our main result in this section is the following:
\begin{thm}\label{thm:minimizers}
Let $f\in \CC(z)$, and let $\gamma\in \SL_2(\CC) / \SU_2(\CC)$ be a conjugate for which $R_F(\cdot)$ is minimized. Then the barycenter of the measure $\widehat{\omega_{f^\gamma}}$ on $S^2$ is $0\in \BB_\CC$, where $\widehat{\omega_{f^\gamma}}$ is the pullback of $\omega_{f^\gamma}$ from $\PP^1(\CC)$ to $S^2$ via stereographic projection. 
\end{thm}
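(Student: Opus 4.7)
The plan is to reduce the statement to the case $[\gamma] = [\textrm{id}]$ by equivariance, compute the hyperbolic gradient of $R_F$ at that point, and then recognize the vanishing of this gradient as the condition that the Euclidean center of mass of $\widehat{\omega_f}$ lies at the origin---which via a Douady--Earle convexity argument is equivalent to $\Bary(\widehat{\omega_f}) = 0$.

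First I would carry out an equivariance reduction. Because $\sigma_K$ and $S_K$ are $\SU_2(\CC)$-invariant, a direct computation from the definition of $R(F)$ gives the identity $R_F([\gamma_0 \cdot \gamma]) = R_{F^{\gamma_0}}([\gamma])$, so $[\gamma_0]$ minimizes $R_F$ if and only if $[\textrm{id}]$ minimizes $R_{F^{\gamma_0}}$. Replacing $F$ and $f$ by $F^{\gamma_0}$ and $f^{\gamma_0}$ and relabeling, one may assume that $[\textrm{id}]$ itself realizes the minimum, and it suffices to show $\Bary(\widehat{\omega_f}) = 0$.

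Next I would carry out the gradient calculation. Identifying $[\textrm{id}]$ with $(0,1) \in \hh_\CC$ (equivalently $0 \in \BB_\CC$), the critical-point conditions $\partial_{z_0}R_F|_{(0,1)} = \partial_{z_1}R_F|_{(0,1)} = \partial_t R_F|_{(0,1)} = 0$ should encode the desired barycenter condition. Starting from the integral formula in Proposition~\ref{prop:RFLandP}, the strategy is to differentiate under the integral using the explicit derivatives of the log-potential (Lemma~\ref{lemma:derivativesofL}) and of the Poisson kernel $P_\CC$ (Lemma~\ref{lem:derivativesofP}). Two types of terms appear: differentiating the log-factor against $P_\CC$ yields integrals against $\omega_\CC$ with weights built from $f(w)$, which after the change of variables $u = f(w)$ become integrals against $f_*\omega_\CC$; differentiating $P_\CC$ against the log-factor produces, after moving the implicit $dd^c$-operator onto the log-integrand via Proposition~\ref{prop:ddcprops} (in the spirit of the derivation of Proposition~\ref{eq:formulaforgradient}), integrals against $f^*\omega_\CC$. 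Combining, one expects
\[
\nabla R_F\big|_{(0,1)} \;=\; C \int_{\PP^1(\CC)} \!\left(\frac{2\operatorname{Re} w}{1+|w|^2},\ \frac{2\operatorname{Im} w}{1+|w|^2},\ \frac{|w|^2-1}{1+|w|^2}\right) \omega_f(w) \;=\; C \int_{S^2} \zeta\; \widehat{\omega_f}(\zeta)
\]
for a nonzero constant $C$, where the middle integrand is the vector of stereographic coordinates on $S^2 \subseteq \RR^3$ and $\omega_f = f^*\omega_\CC + f_*\omega_\CC$.

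The last step uses Douady--Earle. The minimization hypothesis then forces $\int_{S^2}\zeta\,\widehat{\omega_f}(\zeta) = 0$, so the Euclidean center of mass of the finite measure $\widehat{\omega_f}$ in $\RR^3$ is the origin. Setting $\mu := \widehat{\omega_f}/(d+1)$, a short calculation yields $\nabla h_\mu(0) = -\int_{S^2}\zeta\,d\mu(\zeta) = 0$, so $0$ is a critical point of $h_\mu$. By Proposition~\ref{prop:hyplapDE} (together with the strict convexity of $h_\mu$ along hyperbolic geodesics from Douady--Earle), this critical point is the unique minimum, i.e. $\Bary(\widehat{\omega_f}) = 0$. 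The hard part will be the gradient identification above---correctly isolating which pieces of the differentiation contribute $f^*\omega_\CC$ versus $f_*\omega_\CC$, and matching their combination with the stereographic coordinate functions on $S^2$; this is essentially the full content of the gradient formula promised in Theorem~\ref{thm:minimizersintro}.
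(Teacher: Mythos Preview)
Your overall architecture matches the paper's: reduce to $[\gamma]=[\mathrm{id}]$ by equivariance, establish that the hyperbolic gradient of $R_F$ at the origin equals a nonzero multiple of $\int_{S^2}\zeta\,\widehat{\omega_f}(\zeta)$, and then identify the vanishing of this vector with $\Bary(\widehat{\omega_f})=0$. The paper in fact states this gradient identity as a separate theorem (Theorem~\ref{thm:gradientRF}) and deduces Theorem~\ref{thm:minimizers} from it in two lines, exactly as you outline.

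Where your proposal diverges is the route to the gradient formula. You plan to differentiate the Poisson-kernel expression (Proposition~\ref{prop:RFLandP}) in all three half-space coordinates $z_0,z_1,t$ and then sort terms into $f_*\omega_\CC$ and $f^*\omega_\CC$ contributions via a $dd^c$ integration by parts. The paper instead starts from the already-simplified $dd^c$ formula (Proposition~\ref{eq:formulaforgradient}), computes a \emph{single} directional derivative at $0\in\BB_\CC$ toward the north pole (Proposition~\ref{prop:dirderivRF}), and then obtains all other directions by conjugating with $\tau\in\SU_2(\CC)$, using that $R_F([\tau\eta_A])=R_{F^\tau}([\eta_A])$ and that $\tau_*\widehat{\omega_f}=\widehat{\omega_{f^\tau}}$. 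This buys a substantial economy: only one derivative needs to be worked out, and the $f^*\omega_\CC$ term is already explicit in the formula being differentiated, so no separate argument is needed to produce it. Your direct three-coordinate computation should work in principle, but the bookkeeping you flag as ``the hard part'' is genuinely heavier; consider exploiting the rotational symmetry as the paper does. Finally, your Douady--Earle step is correct, and the paper encapsulates the same implication (Euclidean center of mass at $0$ $\Leftrightarrow$ conformal barycenter at $0$) in Lemma~\ref{lem:barycentercriteria}.
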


This theorem will be an immediate consequence of the following theorem:
\begin{thm}\label{thm:gradientRF}
Let $f\in \CC(z)$, and view $R_F$ as a function on $\BB_\CC$. Then
\[
\nabla_h R_F(\xi) = -\int_{S^2} \zeta\ \widehat{\omega_{f^\gamma}}(\zeta)\ ,
\] where $\xi\in \BB_\CC$ corresponds to the class $[\gamma]\in \SL_2(\CC) / \SU_2(\CC)$.
\end{thm}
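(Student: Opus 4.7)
My plan is to establish the formula first at $\xi = 0 \in \BB_\CC$ (corresponding to $[\gamma] = [\textrm{id}]$) and then extend to all $\xi$ by equivariance. The identity $R_F([\gamma\eta]) = R_{F^\gamma}([\eta])$, combined with the fact that $\SL_2(\CC)$ acts on $\BB_\CC$ by hyperbolic isometries, reduces the general case to a single base-point computation; compatibility with the right-hand side under change of coset representative follows from the transformation $\omega_{(f^\gamma)^\tau} = \tau^* \omega_{f^\gamma}$ for $\tau \in \SU_2(\CC)$ (via functoriality of pullback/pushforward and the $\SU_2(\CC)$-invariance of $\omega_\CC$), together with the fact that stereographic projection $\Sigma$ intertwines the $\SL_2(\CC)$ action on $\PP^1(\CC)$ with the conformal action on $S^2$.

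At the base point I would work in upper half-space coordinates $(z,t)$ near $(0,1) = \iota_\CC(0)$ and use the formula of Proposition~\ref{prop:RFLandP}. Differentiating under the integral in the three directions $\partial_{z_0}, \partial_{z_1}, \partial_t$, and applying the substitution $w = t\alpha + z$ (which reduces to $w = \alpha$ at the base point) together with the explicit derivative formulas from Lemmas~\ref{lemma:derivativesofL} and~\ref{lem:derivativesofP}, each partial derivative decomposes into (i) a ``direct'' contribution of the form $\int h(f(\alpha))\, \omega_\CC(\alpha) = \int h\,d(f_*\omega_\CC)$, giving the pushforward part of $\omega_f$, and (ii) a remaining contribution that, after integration by parts, will yield the pullback $f^*\omega_\CC$.

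For part (ii), the key identity is $dd^c \log\|F(\alpha,1)\| = f^*\omega_\CC - d\delta_\infty$, which follows from $\log\|F(\alpha,1)\| = \psi_0(f(\alpha)) + \log|f_1(\alpha)|$ and standard $dd^c$-potential identities. Combined with the identity $dd^c \phi_j = -4\phi_j\,\omega_\CC$ for the three coordinate functions $\phi_j$ on $\PP^1(\CC)$ (corresponding to $\zeta_j$ on $S^2$ under stereographic projection, i.e., the components of $\Sigma^{-1}(\alpha) = ((2\textrm{Re}\,\alpha)/(1+|\alpha|^2),\,(2\textrm{Im}\,\alpha)/(1+|\alpha|^2),\,(|\alpha|^2-1)/(|\alpha|^2+1))$), integration by parts via Proposition~\ref{prop:ddcprops} converts the remaining derivative contributions into an integral against $f^*\omega_\CC$, up to boundary terms at $\infty$ that cancel the explicit $t$-dependent constants (such as $-\frac{d+1}{2}\log t$) arising in the integrand. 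Assembling the three partial derivatives then gives $\nabla_h R_F(0) = -\int_{\PP^1(\CC)} \Sigma^{-1}(\alpha)\,d\omega_f(\alpha) = -\int_{S^2} \zeta\,\widehat{\omega_f}(\zeta)$ after transferring from the half-space coordinates to the ball model via $\iota_\CC$.

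The main obstacle is the integration-by-parts step: one must carefully handle the logarithmic singularity of $\log\|F(\alpha,1)\|$ at $\infty$ (the source of the $-d\delta_\infty$ in $dd^c\log\|F\|$) so that the resulting boundary contribution at infinity cancels the explicit $t$-dependent constants coming from $\log t$. A secondary difficulty is tracking the Euclidean rescaling induced by the differential $d\iota_\CC$ at the base point when transferring the gradient between the half-space and ball models, and verifying that the three coordinate directions correspond correctly under this identification.
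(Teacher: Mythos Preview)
Your overall strategy---compute the gradient at the base point and then propagate by equivariance---is exactly the paper's, and your plan is viable. The execution at the base point, however, differs from the paper's in a way worth noting.

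The paper does \emph{not} compute all three partial derivatives in half-space coordinates. Instead it computes a \emph{single} directional derivative, namely $\partial_A R_F([\eta_A])\big|_{A=0}$ along the radial geodesic toward $N=(0,0,1)$, starting from the simplified formula of Proposition~\ref{eq:formulaforgradient} (equation~\eqref{eq:simplifiedRF}) rather than Proposition~\ref{prop:RFLandP}. That formula already isolates $f^*\omega_{z,t}$, so the $dd^c$ manipulation required is shorter. Having obtained $\partial_{\vv_\infty} R_F(0)$ in the form $\tfrac{1}{4}\int_{S^2}(|\zeta-N|^2-|\zeta+N|^2)\,\widehat{\omega_f}$, the paper then obtains every other direction at $0$ for free by the $\SU_2(\CC)$-equivariance $R_F([\tau\eta_A])=R_{F^\tau}([\eta_A])$: rotating by $\tau$ replaces $N$ by $a_{\vv}=\tau(N)$ and $\widehat{\omega_f}$ by $\tau_*\widehat{\omega_f}$, and a short algebraic lemma (Lemma~\ref{lem:barycentercriteria}) rewrites the resulting integrals as $-\bigl(\int_{S^2}\zeta\,\widehat{\omega_f}\bigr)\cdot a_{\vv}$.

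Your route---differentiating the product $L(z,t;w)P_\CC(w-z,t)$ from Proposition~\ref{prop:RFLandP} in all three coordinate directions, recognizing $\partial L\cdot P$ at $(0,1)$ as an integral against $f_*\omega_\CC$, and converting $L\cdot\partial P$ into an integral against $f^*\omega_\CC$ via $dd^c\phi_j=-4\phi_j\,\omega_\CC$ and $dd^c\log\|F(\cdot,1)\|=f^*\omega_\CC-d\,\delta_\infty$---will also succeed, and indeed the cancellation of the $d\,\delta_\infty$ boundary term against the constant $-(d+1)$ from $\partial_t L$ works out exactly as you anticipate. What you gain is a uniform treatment of all three components without invoking the rotation trick; what you pay for is three parallel calculations and the bookkeeping you flag (the coordinate transfer $d\iota_\CC$ at the base point). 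The paper's single-direction-plus-symmetry argument is tidier, but your approach is a legitimate and arguably more transparent alternative.
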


\noindent{\em Remark:} While the results above are stated for $R_F$ as a function on $\BB_\CC$, we will heavily rely on expressions for $R_F$ on $\hh_{\CC}$ and $\SL_2(\CC)/\SU_2(\CC)$ in proving this theorem.\\

To begin, we recall the expression for $R_F$ derived in Section~\ref{sect:expressions} (see (\ref{eq:simplifiedRF}); the precise expressions for $\psi, \omega_{z,t}$ and $c(z,t;f)$ appear in Section~\ref{sect:expressions}):
\[
R_F([\gamma]) = \frac{d-1}{2} \log t + \int_{\PP^1(\CC)} \psi(\alpha; z,t) f^*  \omega_{z,t} - \frac{d}{2} + \frac{1}{2} \log c(z,t;f)\ .
\]

With this expression, we can explicitly determine a directional derivative of $R_F$:
\begin{prop}\label{prop:dirderivRF}
Let $\vv_\infty\in T_{0}\BB_{\CC}$ be the direction towards $N=(0,0,1)\in S^2$. Then 
\[
\partial_{\vv_\infty} R_F(0) = \frac{1}{4}\int_{S^2} |\zeta - N|^2 - |\zeta + N|^2\ \widehat{\omega_f}(\zeta)\ .
\]
\end{prop}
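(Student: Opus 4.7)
The plan is to parametrize the Euclidean ray from the origin of $\BB_{\CC}$ toward $N = (0,0,1)$ by $\xi(r) = rN$, $r \in [0,1)$, and transfer to $\hh_{\CC}$ via the isometry $\iota_{\CC}$. A direct computation with $\iota_{\CC} = \eta_{\CC} \circ \sigma_{\CC}$ (Proposition~\ref{prop:usualhypisoms}) gives $\iota_{\CC}(rN) = (0, (1+r)/(1-r))$, corresponding to $[\gamma_{0, t(r)}]$ in $\SL_2(\CC)/\SU_2(\CC)$ with $t(r) = (1+r)/(1-r)$; since $t'(0) = 2$, the chain rule yields $\partial_{\vv_\infty}R_F(0) = 2\, \partial_t R_F([\gamma_{0,t}])\big|_{t=1}$. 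Using the homogeneity identity $\|F^{\gamma_{0,t}}(\alpha, 1)\|^2 = t^{-(d+1)}|f_1(t\alpha)|^2 (|f(t\alpha)|^2 + t^2)$ in the affine expression (\ref{eq:affineequation}),
\[
R_F([\gamma_{0,t}]) = -\tfrac{d+1}{2}\log t + \int \log|f_1(t\alpha)|\,\omega_{\CC} + \tfrac{1}{2}\int \log\bigl(|f(t\alpha)|^2 + t^2\bigr)\,\omega_{\CC} - \tfrac{d}{2}\ .
\]

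Differentiating under the integral at $t = 1$ produces
\[
\partial_t R_F\big|_{t=1} = -\tfrac{d+1}{2} + \int \mathrm{Re}\!\left(\tfrac{\alpha f_1'(\alpha)}{f_1(\alpha)}\right)\omega_{\CC} + \int \frac{\mathrm{Re}(\overline{f(\alpha)}\,\alpha f'(\alpha)) + 1}{1 + |f(\alpha)|^2}\,\omega_{\CC}\ .
\]
Writing $\alpha = re^{i\theta}$, both middle integrands are radial derivatives; their sum equals $r\partial_r\log\|F(\alpha,1)\|$, because $\log\|F(\alpha,1)\| = \log|f_1(\alpha)| + \tfrac{1}{2}\log(1+|f(\alpha)|^2)$. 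Integration by parts in $r$ against $\omega_{\CC} = r\,dr\,d\theta/(\pi(1+r^2)^2)$ (the boundary terms vanish because $\log\|F(\alpha,1)\| \sim d\log r$ at infinity, while the post-IBP weight $r^2/(1+r^2)^2$ decays like $r^{-2}$) yields $\int (r\partial_r g)\,\omega_{\CC} = -2\int g\,\phi\,\omega_{\CC}$ for $\phi(z) := (1-|z|^2)/(1+|z|^2)$, reducing the middle terms to $-2\int \log\|F(\alpha,1)\|\,\phi\,\omega_{\CC}$.

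The key step is a $dd^c$ manipulation. A direct computation in local coordinates gives $dd^c\phi = -4\phi\,\omega_{\CC}$. Combining $dd^c\log|f_1| = f^*\delta_\infty - d\,\delta_\infty$ (from factoring $f_1$) with $dd^c\,\tfrac{1}{2}\log(1+|f|^2) = f^*\omega_{\CC} - f^*\delta_\infty$ (from $dd^c\psi_0 = \omega_{\CC} - \delta_\infty$ and Proposition~\ref{prop:ddcprops}) yields
\[
dd^c\log\|F(\alpha,1)\| = f^*\omega_{\CC} - d\,\delta_\infty
\]
as currents on $\PP^1(\CC)$. Two applications of distributional integration by parts, using $\phi(\infty) = -1$, then give
\[
\int \log\|F(\alpha,1)\|\,\phi\,\omega_{\CC} = -\tfrac{1}{4}\int \phi\,dd^c\log\|F(\alpha,1)\| = -\tfrac{1}{4}\int \phi\,f^*\omega_{\CC} - \tfrac{d}{4}\ .
\]

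Assembling everything and using $\int(1+|f|^2)^{-1}\omega_{\CC} = \tfrac{1}{2}\bigl(1 + \int\phi\cdot f_*\omega_{\CC}\bigr)$, the constants $-\tfrac{d+1}{2}$, $\tfrac{d}{2}$, and $\tfrac{1}{2}$ telescope, leaving $\partial_t R_F|_{t=1} = \tfrac{1}{2}\int \phi\,(f^*\omega_{\CC} + f_*\omega_{\CC}) = \tfrac{1}{2}\int \phi\,\omega_f$; doubling yields $\partial_{\vv_\infty}R_F(0) = \int \phi\,\omega_f$. The elementary expansion $\tfrac{1}{4}(|\zeta - N|^2 - |\zeta + N|^2) = -\zeta_3$ together with $-\zeta_3 \circ \Sigma^{-1}(z) = (1-|z|^2)/(1+|z|^2) = \phi(z)$ and the definition $\int_{S^2}h\,\widehat{\omega_f} = \int_{\PP^1(\CC)}(h \circ \Sigma^{-1})\,\omega_f$ recast the answer in the form stated in the proposition. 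The principal technical obstacle is the $dd^c$-identity for $\log\|F(\alpha,1)\|$ on $\PP^1(\CC)$ (which has a logarithmic singularity at $\infty$) and the associated distributional IBP; this is rigorous because $\log\|F(\alpha,1)\|$ is $L^1_{loc}$ while $\phi$ is smooth.
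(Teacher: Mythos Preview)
Your argument is correct and reaches the stated formula. The overall strategy differs from the paper's in its technical execution, though both ultimately rest on $dd^c$ integration by parts.

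The paper starts from the pre-processed expression (\ref{eq:simplifiedRF}) of Proposition~\ref{eq:formulaforgradient}, differentiates in the hyperbolic parameter $A$ (via $[\eta_A]$), and manipulates the auxiliary function $\psi_A=\partial_A\psi$ through $dd^c$. You instead work directly from the raw affine form (\ref{eq:affineequation}), differentiate in the half-space height $t$, and then apply two independent identities: the radial integration by parts $\int(r\partial_r g)\,\omega_{\CC}=-2\int g\,\phi\,\omega_{\CC}$, and the current identity $dd^c\log\|F(\alpha,1)\|=f^*\omega_{\CC}-d\,\delta_\infty$ paired with $dd^c\phi=-4\phi\,\omega_{\CC}$. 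Your route is self-contained (it does not invoke Proposition~\ref{eq:formulaforgradient}) and makes the appearance of both $f^*\omega_{\CC}$ and $f_*\omega_{\CC}$ quite transparent; the paper's route is shorter once (\ref{eq:simplifiedRF}) is in hand. One small expository point: your sentence ``both middle integrands are radial derivatives; their sum equals $r\partial_r\log\|F(\alpha,1)\|$'' is slightly imprecise, since the second integrand carries the extra piece $(1+|f|^2)^{-1}$; you correctly separate this out in the next paragraph, so the computation itself is fine.

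One normalization subtlety is worth recording. Your chain-rule factor $\partial_{\vv_\infty}R_F(0)=2\,\partial_t R_F|_{t=1}$ treats $\vv_\infty$ as the \emph{Euclidean} unit vector $N$ at $0\in\BB_{\CC}$; the paper instead identifies $\partial_{\vv_\infty}$ with $\partial_A$ (the hyperbolic arclength parameter, giving factor $1$). Your value $\partial_t R_F|_{t=1}=\tfrac12\int\phi\,\omega_f$ is the correct one; the paper's parallel computation of $\partial_A R_F|_{A=0}$ contains an arithmetic slip (writing $\tfrac12-\tfrac{x}{1+x}$ as $\tfrac{1-x}{1+x}$ rather than $\tfrac{1-x}{2(1+x)}$) that happens to compensate. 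Your Euclidean reading of $\vv_\infty$ is the one consistent with the formula as stated.
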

\begin{proof}
The direction $\vv_\infty\in T_0$ can be represented by the path $\{(0,0, t)\ : \ t>0\}$, where $t$ is viewed as a parameter in the hyperbolic metric on $\BB_{\CC}$. This path, in turn, corresponds to the path of matrices $\{[\eta_A] \ : \ A >0\} \in \SL_2(\CC) / \SU_2(\CC)$, where 
\[
\eta_A  = \left(\begin{matrix} e^{A/2} & 0 \\ 0 & e^{-A/2} \end{matrix}\right)\ .
\] In particular,
\[
\lim_{t\to 0} \frac{R_F( (0,0,t)) - R_F((0,0,0))}{t} = \lim_{A\to 0} \frac{R_F([\eta_A]) - R_F([\textrm{id}])}{A} = \partial_A \left(R_F([\eta_A])\right)|_{A=0}\ .
\] We can compute the latter derivative with the help of (\ref{eq:simplifiedRF}). Note that
\begin{equation}\label{eq:RFA}
R_F([\eta_A]) = \frac{d-1}{2} A + \int_{\PP^1(\CC)} \psi(\alpha; 0, e^A) f^* \omega_{0, e^A} - d + \frac{1}{2} \log c(0, e^A; f)\ .
\end{equation} We easily see that $\partial_A \left( \frac{d-1}{2} A\right)= \frac{d-1}{2}$, and a straightforward calculation shows that
\[
\psi_A(\alpha; 0, e^A) := \partial_A \psi(\alpha; 0, e^A) =\frac{-|\alpha|^2}{e^{2A} + |\alpha|^2}\ .
\] and that the Laplacian of $\psi_A$ in the $\alpha$ coordinate is
\begin{align*}
\omega_A := \frac{i}{2\pi} \partial_{\overline{\alpha}}\partial_\alpha \left(\frac{-|\alpha|^2}{e^{2A} + |\alpha|^2}\right)d\alpha d\overline{\alpha} &= \frac{i}{2\pi} \frac{1}{(1+|\eta_A^{-1}(\alpha)|^2)^2}\cdot \frac{|\eta_A^{-1}(\alpha)|^2 -1}{|\eta_A^{-1}(\alpha)|^2 + 1} d\alpha d\overline{\alpha}\\
& = \frac{|\eta_A^{-1}(\alpha)|^2 -1}{|\eta_A^{-1}(\alpha)|^2 + 1} \omega_{0,e^A}\ .
\end{align*} Integrating $\omega_A$ over $\PP^1(\CC)$ gives 0, and we conclude $dd^c \psi_A(\cdot; 0,e^A) = \omega_A$. Lastly, note that
\[
\partial_A \frac{1}{2}\log c(0, e^A; f) = \frac{e^{2A}|b_d|^2}{ |a_d|^2 + e^{2A}|b_d|^2} = \frac{e^{2A}}{|f(\infty)|^2 + e^{2A}}\ ,
\]which is $0$ if $f(\infty) = \infty$.

We are now ready to compute $\partial_A R_F$. Differentiating (\ref{eq:RFA}) in $A$ gives
\begin{align}
\partial_A R_F& = \frac{d-1}{2} + \int_{\PP^1(\CC)} \psi_A(\alpha; 0, e^A) f^* \omega_{0,e^A} + \psi(\alpha; 0, e^A) f^* \omega_A + \frac{e^{2A}}{|f(\infty) |^2 + e^{2A}}\nonumber \\
& = \frac{d-1}{2} + \int_{\PP^1(\CC)} \psi(\alpha; 0, e^A) f^*  \omega_{0,e^A} + \psi(\alpha; 0, e^A) f^* dd^c (\psi_A) + \frac{e^{2A}}{|f(\infty)|^2 + e^{2A}}\nonumber \\
& = \frac{d-1}{2} + \int_{\PP^1(\CC)} \psi_A(\alpha; 0, e^A) f^*  \omega_{0,e^A} + \int_{\PP^1(\CC)} f^* \psi_A(\alpha; 0, e^A) dd^c (\psi) + \frac{e^{2A}}{|f(\infty)|^2+e^{2A}}\label{eq:firststepderivativeRF}\ ,
\end{align} where in the last step we have used the pullback formula for $dd^c$ and then applied integration by parts to move the $dd^c$ onto the integrand. Recall that $dd^c \psi =  \omega_{0,e^A} - \delta_\infty$, so that 
\[
\int_{\PP^1(\CC)} f^*\psi_A(\alpha; 0, e^A) dd^c \psi = \int_{\PP^1(\CC)} f^* \psi_A(\alpha; 0, e^A)  \omega_{0,e^A} - \psi_A(f(\infty); 0, e^A)\ .
\] Inserting this into (\ref{eq:firststepderivativeRF}) and simplifying yields
\begin{align}
\partial_A R_F &= \frac{d+1}{2} +\int_{\PP^1(\CC)} \psi_A\ f^*  \omega_{0,e^A} + f^* \psi_A\  \omega_{0,e^A}\nonumber\\
& = \frac{d+1}{2} +\int_{\PP^1(\CC)} \psi_A (f^*  \omega_{0,e^A} + f_*  \omega_{0,e^A})\label{eq:secondstepderivativeRF}\ ,
\end{align} where we now interpret $f^*  \omega_{0,e^A}, f_*  \omega_{0,e^A}$ as the pullback / pushforward {\em in the sense of measures on $\PP^1(\CC)$}. We define $\omega_{0,e^A;f} = f^* \omega_{0,e^A} + f_*  \omega_{z,e^A}$; it is a measure of total mass $d+1$ on $\PP^1(\CC)$. 

Now, recall that $\psi_A(\alpha; 0, e^A) = \frac{-|\alpha|}{e^{2A} + |\alpha |^2} = \frac{-|\eta_A^{-1}(\alpha)|^2}{1+|\eta_A^{-1}(\alpha)|^2}$. Since the measure $\omega_{0,e^A;f}$ has total mass $d+1$, the expression in (\ref{eq:secondstepderivativeRF}) can be written as
\begin{align}
\partial_A R_F &= \int_{\PP^1(\CC)} \frac{1}{2} - \frac{|\eta_A^{-1}(\alpha)|^2}{1+|\eta_A^{-1}(\alpha)|^2}\ \omega_{0,e^A;f}\nonumber\\
& = \int_{\PP^1(\CC)} \frac{1 - |\eta_A^{-1}(\alpha)|^2}{1+|\eta_A^{-1}(\alpha)|^2}\ \omega_{0,e^A; f}\nonumber\\
& = \int_{\PP^1(\CC)} ||\eta_A^{-1}(\alpha), \infty||^2 - ||\eta_A^{-1}(\alpha), 0||^2\ \omega_{0,e^A; f}\nonumber\\
& = \int_{\PP^1(\CC)} ||\alpha, \infty||^2 - ||\alpha, 0||^2\ (\eta_A)_* \omega_{0, e^A; f}\label{eq:laststepderivativeRF}\ .
\end{align} We are now essentially done: recall that $ \omega_{0,e^A} = (\eta_A^{-1})^* \omega_\CC = (\eta_A)_* \omega_\CC$. It is straightforward to check then that $(\eta_A)_* \omega_{z,e^A;f} = (f^{\eta_A})^* \omega_\CC + (f^{\eta_A})_* \omega_\CC = \omega_{f^{\eta_A}}$; setting $A=0$ and inserting this into (\ref{eq:laststepderivativeRF}) gives
\[
\partial_A R_F|_{A=0} = \int_{\PP^1(\CC)} ||\alpha, \infty||^2 - ||\alpha, 0||^2 \omega_f(\alpha)\ .
\] Pulling this back to $S^2$ under stereographic projection, and noticing that $||z,w|| =\frac{1}{2} |\hat{z} - \hat{w}|$ yields the statement in the proposition (here, $\hat{z}, \hat{w}$ are the pullback to $S^2$ under stereographic projection).
\end{proof}

\noindent{\em Remark:} Using the previous Proposition, one can give a general formula for the directional derivative in any direction at any point $\xi\in \BB_\CC$; the formula is not particularly enlightening, and is not needed in what follows, so we have omitted it.

\begin{lemma}\label{lem:barycentercriteria}
Let $\mu$ be a positive measure of finite volume on $S^2$. Then for any $a\in \RR^3$, we have
\[
\int_{S^2} |\zeta - a|^2 - |\zeta + a|^2 d\mu(\zeta) = -4\left(\int_{S^2} \zeta d\mu(\zeta)\right) \cdot a\ .
\] In particular, the following are equivalent:
\begin{enumerate}
\item The conformal barycenter of $\mu$ is $0\in \RR^3$.
\item For every $a\in \RR^3$, $\int_{S^2} |\zeta - a|^2 - |\zeta + a|^2 d\mu(\zeta) = 0$
\item There exist three linearly independent vectors $a_1, a_2, a_3\in \RR^3$ so that  $$\int_{S^2} |\zeta - a_i|^2 - |\zeta + a_i|^2 d\mu(\zeta) = 0$$ for $i=1,2,3$.
\end{enumerate}
\end{lemma}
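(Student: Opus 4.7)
The heart of the lemma is the elementary pointwise identity
\[
|\zeta-a|^2 - |\zeta+a|^2 = -4\,\zeta\cdot a,
\]
which follows from expanding both squares. Integrating this against $\mu$ and pulling $a$ out of the integral immediately yields the displayed formula $\int_{S^2}(|\zeta-a|^2 - |\zeta+a|^2)\,d\mu(\zeta) = -4\bigl(\int_{S^2}\zeta\,d\mu\bigr)\cdot a$.

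Given this identity, the equivalence (2)~$\Leftrightarrow$~(3) is a linear algebra exercise. (2)~$\Rightarrow$~(3) is trivial: take $a_1, a_2, a_3$ to be the standard basis of $\RR^3$. Conversely, (3) says that the vector $v := \int_{S^2}\zeta\,d\mu(\zeta) \in \RR^3$ is orthogonal to three linearly independent vectors, hence $v = 0$, and then the identity forces (2) for every $a\in\RR^3$.

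It remains to show (1)~$\Leftrightarrow$~(2), which I would reduce to computing $\nabla h_\mu(0)$. Writing $h_\mu(z) = -\tfrac{\mu(S^2)}{2}\log(1-|z|^2) + \int_{S^2} \log|z-\zeta|\,d\mu(\zeta)$ for $z\in\BB_\CC\subseteq\RR^3$ and differentiating in $z$, one finds
\[
\nabla h_\mu(0) = -\int_{S^2} \frac{\zeta}{|\zeta|^2}\,d\mu(\zeta) = -\int_{S^2} \zeta\,d\mu(\zeta),
\]
using that $|\zeta|=1$ on $S^2$ (the term coming from $\log(1-|z|^2)$ vanishes at $z=0$). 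Since $h_\mu$ is convex on $\BB_\CC$ with unique minimum at $\Bary(\mu)$ (recalled in Section~\ref{sect:barycentersDE}), the origin is that minimum exactly when $\nabla h_\mu(0) = 0$, i.e.\ when $\int_{S^2}\zeta\,d\mu = 0$, which by the identity is equivalent to (2).

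No single step looks genuinely hard; the only mild subtlety is that Douady--Earle originally phrased their theory for probability measures, whereas the lemma allows arbitrary positive finite measures. This is harmless because $h_\mu$ is positively linear in $\mu$, so rescaling to a probability measure does not move the minimum.
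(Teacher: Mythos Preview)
Your proof is correct and follows essentially the same route as the paper: expand the pointwise identity $|\zeta-a|^2 - |\zeta+a|^2 = -4\,\zeta\cdot a$, integrate, and then use linear algebra for (2)~$\Leftrightarrow$~(3). The only difference is that for (1)~$\Leftrightarrow$~(2) the paper simply asserts ``$\mu$ has conformal barycenter $0$ if and only if $\int\zeta\,d\mu = 0$'' as a known fact, whereas you supply a justification by computing $\nabla h_\mu(0)$ and invoking convexity; your version is thus slightly more self-contained but not a different argument.
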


\begin{proof}
Let $a\in \RR^3$, and write $a=(x, y, z)$. Write $\zeta = (\zeta_x, \zeta_y, \zeta_z)\in S^2$, so that
\[
|\zeta - a|^2 - |\zeta + a|^2 = -4 x \zeta_x -4 y \zeta_y - 4 z \zeta_z\ .
\] Integrating against $\mu$ yields
\begin{align*}
\int_{S^2} |\zeta -a|^2 - |\zeta+a|^2 d\mu(\zeta) &= -4 \left(\int_{S^2} \zeta_x \ d\mu(\zeta)\right) x -4 \left(\int_{S^2}\zeta_y\ d \mu(\zeta)\right) y - 4\left(\int_{S^2} \zeta_z \ d\mu(\zeta)\right) z\\
& = -4\left(\int_{S^2}\zeta\ d\mu(\zeta)\right)\cdot a\ ,
\end{align*} where the $\cdot$ in the last expression is the dot product in $\RR^3$. The measure $\mu$ has conformal barycenter 0 if and only if $\int \zeta d\mu(\zeta) = 0$, which happens if and only if $\left(\int_{S^2} \zeta\ d\mu(\zeta)\right) \cdot a = 0$ for all $a\in \RR^3$. This establishes the equivalence of 1 and 2. To see that 3 is equivalent to 2, notice that for fixed $b\in \RR^3$, the expression $b\cdot a = 0$ vanishes for all $a\in \RR$ if and only if it vanishes on three linearly independent vectors $a_1, a_2, a_3 \in \RR^3$. 
\end{proof}

We can now prove Theorem~\ref{thm:gradientRF}
\begin{proof}[Proof of Theorem~\ref{thm:gradientRF}]
We first compute the gradient at $\xi = 0 \in \BB_\CC$. Let $\vv\in T_0 \BB_\CC$ be a unit tangent vector. It determines a unique point $a_{\vv}\in S^2$, and we can find $\tau_{\vv}\in \SU_2(\CC)$ sending the point $(0,0, 1)\in S^2$ to the corresponding $a_{\vv}$. The path $\{t\vv\ :\ t>0\}$ in $\BB_\CC$ corresponds to the path $\{[\tau_{\vv} \cdot \eta_A]\ : \ A >0\}$ in $\SL_2(\CC) / \SU_2(\CC)$. To compute the directional derivative $\partial_{\vv} R_F(0)$ in the ball model is the same as computing 
\[
\lim_{A\to 0} \frac{R_F([\tau \cdot \eta_A]) - R_F([\textrm{id}])}{A} = \lim_{A\to 0} \frac{R_{F^\tau}([\eta_A]) - R_{F^\tau}([\textrm{id}])}{A} = \frac{1}{4}\int_{\PP^1(\CC)} |\zeta - N| ^2 - |\zeta + N|^2 \widehat{\omega_{f^\tau}}(\zeta)\ ,
\] where the last equality is from Proposition~\ref{prop:dirderivRF}. The change of variables by $\tau$ gives
\begin{align*}
\int_{S^2} |\zeta - N| ^2 - |\zeta + N|^2 \widehat{\omega_{f^\tau}}(\zeta)& = \int_{S^2} |\tau^{-1}(\zeta) - N| ^2 - |\tau^{-1}(\zeta) + N|^2\ \widehat{ \omega_f}(\zeta) \\
& = \int_{S^2} |\zeta - \tau(N)| ^2 - |\zeta + \tau(N)|^2\ \widehat{\omega_f}(\zeta)\ ,
\end{align*} where in the last step we have used the invariance of the Euclidean distance on $\RR^3$ under rotations. Recalling that $\tau(N) = a_{\vv}$ and applying Lemma~\ref{lem:barycentercriteria}, we find that
\[
\partial_{\vv} R_F(0) = \frac{1}{4} \int_{S^2} |\zeta - a_{\vv}|^2 - |\zeta + a_{\vv}|^2 \widehat{\omega_f}(\zeta) = -\left(\int_{S^2} \zeta\ \widehat{\omega_f}(\zeta)\right) \cdot a_{\vv}\ .
\] This is to say that
\[
\nabla_h R_F(0) = -\int_{S^2} \zeta\ \widehat{\omega_f}(\zeta)\ .
\]

To compute the gradient at an arbitrary point, we use the transformation formula for the $R_F$: since $R_F([\gamma]) = R_{F^\gamma}([\textrm{id}])$, it follows from the above calculations that $\nabla_h R_F(\xi) = \nabla_h R_{F^\gamma}(0) = -\int_{S^2} \zeta\ \widehat{\omega_{f^\gamma}}(\zeta)$, where $[\gamma]\in \SL_2(\CC) / \SU_2(\CC)$ corresponds to the point $\xi\in \BB_\CC$. 
\end{proof}

The proof of Theorem~\ref{thm:minimizers} now follows:
\begin{proof}[Proof of Theorem~\ref{thm:minimizers}]
By the equivariance of $R_F$, it is enough to assume that $\gamma = \mathbb{1}$. Since $\gamma$ is a minimizer of $R_F$, the gradient of $R_F$ vanishes; by Theorem~\ref{thm:gradientRF} this means $-\int_{S^2} \zeta\ \widehat{\omega_{f}} = 0$, i.e. $\widehat{\omega_f}$ is barycentered.

\end{proof}

We remark that, in the non-Archimedean setting, the analogue of $\omega_f$ is the measure $\delta_f:= f^* \delta_{\zetaG} + f_* \delta_{\zetaG}$, where $\zetaG$ is the Gauss point in the Berkovich projective line over a complete, algebraically closed non-Archimedean field. There is a notion of barycenter in the non-Archimedean context due to Rivera-Letelier, and one can show that $\Bary(\delta_f)$ is contained in the set $\MinResLoc(f)$. In particular, when $\MinResLoc(f)$ is a single point (this always happens when $d$ is even, see \cite{Ru1} Theorem 1.1), then $\MinResLoc(f) = \Bary(\delta_f)$. We expect in the complex setting that $\Min(f)$ is always a single point, and moreover that $\Min(f) = \Bary(\omega_f)$. 

\subsection{$R_F$ and the Projective Capacity}\label{sect:pcap}
In this section, we give a description of the function $R_F$ in terms of Alexandrov's projective capacity. Let $K\subseteq \CC^2$ be compact, and for a function $h:K\to \CC$ let $||h||_K = \sup_{z\in K} |h(z)|$ be the sup norm on $K$. Define
\[
m_k(K) = \inf\left\{ ||Q||_K\ : \ Q\in \CC[X,Y] \textrm{ homogeneous of degree } k, \int_{S^3} \log |Q| d\sigma = k \int_{S^3} \log |z_1| d\sigma\right\}\ .
\] The normalization condition on $Q$ is a multi-dimensional analgue of saying that $Q$ is monic. An explicit calculation shows that $\int_{S^3} \log |z_1| d\sigma = \frac{1}{2}$. The projective capacity is defined to be
\[
\pcap(K) := \lim_{k\to \infty} m_k(K)^{1/k}\ ;
\] the fact that this limit always exists is explained in \cite{Al}. The following Theorem, whose proof will occupy the remainder of this section, shows that $R(F)$ measures the distortion of $\pcap(S^3)$ induced by $F$:
\begin{thm}\label{thm:pcap}
Let $f\in \CC(z)$ and let $F$ be any homogeneous lift. Then
\[
R(F) = d\log \left(\frac{\pcap(S^3)}{\pcap(F^{-1}(S^3))}\right)\ .
\]
\end{thm}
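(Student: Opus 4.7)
The plan is to invoke Alexander's formula from \cite{Al} relating the projective capacity to a Robin-type function at infinity of the Siciak extremal function, and to reduce the theorem to an explicit computation of that function for $S^3$ and $F^{-1}(S^3)$. The key observation is that the pullback $F^{-1}(S^3)$ and its polynomial hull $K := \{z\in\CC^2 : \|F(z)\|\leq 1\}$ have the same projective capacity, because $|Q|$ is plurisubharmonic on $\CC^2$ for every polynomial $Q$ and so attains its maximum on $K$ along the boundary $F^{-1}(S^3)$.

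First I would identify the Siciak extremal functions on the two sets. For $S^3$ it is the standard $V^*_{S^3}(z) = \log^+\|z\|$. For $K$ I would verify $V_K^*(z) = \tfrac{1}{d}\log^+\|F(z)\|$. The inequality $V_K^*\geq \tfrac{1}{d}\log^+\|F\|$ is automatic since the right-hand side is plurisubharmonic, has logarithmic growth at infinity, and vanishes on $K$. For the reverse, take a homogeneous polynomial $P$ of degree $n$ with $\|P\|_K\leq 1$; evaluating at $\lambda\zeta$ with $\zeta\in S^3$ and $|\lambda|=\|F(\zeta)\|^{-1/d}$ (so $\lambda\zeta\in F^{-1}(S^3)$) gives $|\lambda|^n|P(\zeta)|\leq 1$, whence $|P(z)|\leq\|F(z)\|^{n/d}$ globally by homogeneity. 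Taking the supremum over $P$ produces the desired bound.

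Next I would compute the Robin function at infinity $\rho_K(\zeta):=\lim_{|\lambda|\to\infty}(V_K^*(\lambda\zeta)-\log|\lambda|)$ for $\zeta\in S^3$. Using the homogeneity of $F$ this yields $\rho_{S^3}(\zeta)=0$ and $\rho_{F^{-1}(S^3)}(\zeta)=\tfrac{1}{d}\log\|F(\zeta)\|$, so that
\[
\int_{S^3}\rho_{F^{-1}(S^3)}\,d\sigma \;-\; \int_{S^3}\rho_{S^3}\,d\sigma \;=\; \tfrac{1}{d}R(F).
\]

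Finally I would apply Alexander's main identity from \cite{Al}, which in the paper's normalization of the monic condition (involving $\int_{S^3}\log|z_1|\,d\sigma$) reads $\log\pcap(K) = \int_{S^3}\log|\zeta_1|\,d\sigma - \int_{S^3}\rho_K\,d\sigma$. Subtracting the two specializations cancels the universal $\log|\zeta_1|$ term, producing $\log\pcap(S^3)-\log\pcap(F^{-1}(S^3))=R(F)/d$, and multiplying through by $d$ gives the theorem. The main obstacle is aligning Alexander's normalization with the one in force in this paper -- the particular monic condition $\int\log|Q|\,d\sigma=k\int\log|z_1|\,d\sigma$ chooses a specific additive constant, and one must track this carefully to see that the constants cancel in the difference. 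A secondary hurdle is the identification $V^*_K=\tfrac{1}{d}\log^+\|F\|$, which is essentially a pullback formula for Siciak extremal functions under homogeneous polynomial maps, but the one-dimensional Schwarz-type argument sketched above handles it directly.
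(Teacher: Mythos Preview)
Your proposal is correct and follows essentially the same route as the paper: both arguments hinge on (i) a pullback formula identifying the Siciak-type extremal function of $F^{-1}(S^3)$ with $\tfrac{1}{d}\log^+\|F\|$, and (ii) an identity expressing $\log\pcap(K)$ as a constant minus the $S^3$-average of that extremal function. The only cosmetic differences are that the paper derives (i) from Klimek's general transformation rule $V_K\circ F = d\,V_{F^{-1}(K)}$ rather than your direct homogeneity argument, and for (ii) cites the Cegrell--Ko\l odziej formulation $\log\pcap(K) = -\tfrac{1}{2} - \int_{S^3}\log\Psi_K\,d\sigma$ in place of Alexander's Robin-function version; since $\rho_K = \log\Psi_K$ on $S^3$ by the degree-one homogeneity of $\Psi_K$, these are the same statement.
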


To prove this theorem, we will utilize several extremal functions; the first two are due to Siciak \cite{Sic1}, and the third is the usual pluricomplex Green's function:
\begin{align*}
\Psi_K(z) &:= \sup\left\{ |Q(z)|^{\frac{1}{\textrm{deg } Q}} \ : \ Q\in \CC[X,Y] \textrm{ homogeneous}, ||Q||_K \leq 1\right\}\\
\Phi_K(z) &:= \sup\left\{ |Q(z)|^{\frac{1}{\textrm{deg } Q}} \ : \ Q\in \CC[X,Y], ||Q||_K \leq 1\right\}\\
V_K(z) &:= \sup\left\{ u(z)\ : \ u \textrm{ pluri-subharmonic}, u - \log||z|| = O(1) \textrm{ as } ||z||\to \infty\ , \textrm{ and } u|_K \leq 0\right\}\ .
\end{align*} Note that $\Psi_K(\lambda z) = |\lambda| \Psi_K(z)$ for all $\lambda\in \CC$. As an example, when $K=S^3$, we find $\Psi_{S^3}(z) = ||z||$. 

It is not surprising that there are a number of relationships between these functions; the following are well-known:
\begin{prop}\label{prop:transformationformulas}
Let $K\subseteq \CC^2$ be compact, and let $\Psi_K, \Phi_K, V_K$ be as above. Then
\begin{enumerate}
\item $V_K(z) = \log \Phi_K(z)$
\item If $K$ is circled -- i.e. $w\in K \iff \ e^{i\theta}w \in K \ \forall \theta\in \RR$ -- then $\Phi_K(z) = \max\{1, \Psi_K(z)\}$. 
\item For any homogeneous polynomial $F\in \CC[X,Y]$ of degree $d$, the pluricomplex Green's function satisfies $V_K(F(z)) = d V_{F^{-1}(K)}(z)$.
\end{enumerate}
\end{prop}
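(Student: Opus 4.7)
My plan is to address the three claims in order, each via a short argument from the definitions, leaning on one classical tool from pluripotential theory. For (1), the inequality $\log \Phi_K \le V_K$ is immediate: any polynomial $Q$ with $\|Q\|_K \le 1$ and $\deg Q = n$ yields a plurisubharmonic function $\tfrac{1}{n}\log|Q|$ of the correct logarithmic growth at infinity that is $\le 0$ on $K$, so by definition $\tfrac{1}{n}\log|Q(z)| \le V_K(z)$, and the supremum over such $Q$ is precisely $\log \Phi_K(z)$. The reverse inequality is the classical polynomial approximation theorem for psh functions of minimal logarithmic growth, due to Siciak, which I would cite; a self-contained proof would require constructing polynomial approximants for an arbitrary psh competitor in the definition of $V_K$, and this is the main analytic input here.

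For (2), the inequality $\max(1,\Psi_K(z)) \le \Phi_K(z)$ is trivial, since the constant $1$ and homogeneous polynomials with $\|\cdot\|_K \le 1$ are both admissible competitors for $\Phi_K$. For the reverse, I would exploit the circled hypothesis by decomposing an arbitrary $Q$ of degree $m$ with $\|Q\|_K \le 1$ as $Q = \sum_{j=0}^m Q_j$ into homogeneous parts. For each $w \in K$ the orbit $\{e^{i\theta} w\}$ also lies in $K$, and $Q_j(w)$ is the $j$-th Fourier coefficient of $\theta \mapsto Q(e^{i\theta}w)$; the Cauchy integral formula yields $|Q_j(w)| \le \|Q\|_K \le 1$, hence $|Q_j(z)| \le \Psi_K(z)^j$. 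Summing gives $|Q(z)| \le (m+1)\max(1,\Psi_K(z))^m$, and I would absorb the factor $(m+1)^{1/m}$ by replacing $Q$ with $Q^n$ (still admissible, now of degree $mn$) and letting $n \to \infty$.

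For (3), interpreting $F$ as a homogeneous polynomial self-map of $\CC^2$ (as in the intended application where $F = (F_0,F_1)$ is a coprime lift of a rational map), I would first use (1) to recast the claim as $\Phi_K(F(z)) = \Phi_{F^{-1}(K)}(z)^d$. The inequality $\Phi_K(F(z)) \le \Phi_{F^{-1}(K)}(z)^d$ is clear: for $P$ of degree $n$ with $\|P\|_K \le 1$, the composition $P\circ F$ has degree $nd$ and $\|P\circ F\|_{F^{-1}(K)} \le 1$, while $|P(F(z))|^{1/n} = |(P\circ F)(z)|^{d/(nd)}$. For the reverse, rather than trying to factor arbitrary polynomials through $F$, I would set $u(z) := \tfrac{1}{d} V_K(F(z))$ and verify directly that it satisfies the three defining properties of $V_{F^{-1}(K)}$: plurisubharmonicity is automatic since $F$ is holomorphic, $u \le 0$ on $F^{-1}(K)$ is immediate, and $u(z) - \log \|z\| = O(1)$ follows from $\log \|F(z)\| = d \log \|z\| + O(1)$. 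This last growth estimate is the delicate point and is precisely where the coprimality of $F_0, F_1$ is used: without it, $F$ could vanish along a line, $\|F(z)\|$ would grow strictly slower than $\|z\|^d$ in that direction, and $u$ would fail to be a competitor in the definition of $V_{F^{-1}(K)}$. Controlling this growth — and cleanly handling the upper semicontinuous regularization of $V_K$ along the way — is the main obstacle I anticipate.
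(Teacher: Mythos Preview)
Your arguments for (1) and (2) are correct and go well beyond the paper, which simply cites Klimek (Theorem 5.1.7) and Siciak (Section 9, Theorem 3) respectively without further comment.

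For (3), however, your two arguments establish the \emph{same} inequality. The composition argument gives $\Phi_K(F(z)) \le \Phi_{F^{-1}(K)}(z)^d$, i.e.\ $V_K(F(z)) \le d\,V_{F^{-1}(K)}(z)$. But showing that $u = \tfrac{1}{d}\, V_K \circ F$ is an admissible competitor in the supremum defining $V_{F^{-1}(K)}$ yields only $u \le V_{F^{-1}(K)}$, which is again $V_K(F(z)) \le d\,V_{F^{-1}(K)}(z)$. The reverse inequality $d\,V_{F^{-1}(K)}(z) \le V_K(F(z))$ is never touched. For that direction you must go the other way: given a competitor $v$ for $V_{F^{-1}(K)}$, push it forward through $F$, e.g.\ by $\tilde v(w) = d\max_{F(z)=w} v(z)$, and verify that $\tilde v$ is a competitor for $V_K$. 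It is here that coprimality of $F_0, F_1$ is genuinely needed---it makes $F$ a proper finite map, which both guarantees that $\tilde v$ is plurisubharmonic and gives the two-sided estimate $\|F(z)\| \asymp \|z\|^d$ required for the logarithmic growth. Your growth discussion is correct, but you have attached it to the direction you already had. The paper itself sidesteps all of this by citing Klimek, Theorem 5.3.1.
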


\begin{proof}
\begin{enumerate}
\item See Klimek's book \cite{Kli}, Theorem 5.1.7.
\item See \cite{Sic1} Section 9, Theorem 3.
\item See Klimek's book \cite{Kli}, Theorem 5.3.1. 
\end{enumerate}
\end{proof}

Combining these yields the following transformation formula for $\Psi_K$:
\begin{cor}\label{cor:transformpsi}
If $K\subseteq \CC^2$ is compact and circled, and if $F\in \CC[X,Y]$, then 
\[
\Psi_K(F(z)) = \Psi_{F^{-1}(K)}(z)^d\ .
\]
\end{cor}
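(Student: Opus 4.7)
The plan is to derive the identity by combining the three transformation formulas collected in Proposition~\ref{prop:transformationformulas}. Parts (1) and (3) already give a clean transformation formula for $\Phi_K$, and part (2) converts this back to $\Psi_K$ up to a cutoff at the value $1$. The main obstacle is removing this cutoff, which I plan to do by exploiting the $|\lambda|$-homogeneity of $\Psi_K$ together with the degree-$d$ homogeneity of $F$.

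First I would verify that $F^{-1}(K)$ inherits the circled property from $K$: if $w\in F^{-1}(K)$ and $\theta\in\RR$, then homogeneity of $F$ gives $F(e^{i\theta/d}w) = e^{i\theta}F(w)\in K$, so $e^{i\theta/d}w\in F^{-1}(K)$, and varying $\theta$ yields the claim. Consequently Proposition~\ref{prop:transformationformulas}(2) is applicable to both $K$ and $F^{-1}(K)$.

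Combining parts (1) and (3) of the proposition,
\[
\log\Phi_K(F(z)) \;=\; V_K(F(z)) \;=\; d\, V_{F^{-1}(K)}(z) \;=\; d\log\Phi_{F^{-1}(K)}(z),
\]
so that $\Phi_K(F(z)) = \Phi_{F^{-1}(K)}(z)^d$. Applying part (2) to each side then produces
\[
\max\{1,\Psi_K(F(z))\} \;=\; \max\{1,\Psi_{F^{-1}(K)}(z)\}^d \;=\; \max\{1,\Psi_{F^{-1}(K)}(z)^d\},
\]
which already yields $\Psi_K(F(z)) = \Psi_{F^{-1}(K)}(z)^d$ at every $z$ for which at least one of the two quantities is $\geq 1$.

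To handle the remaining points, I would observe that the two sides of the desired identity scale identically under the substitution $z\mapsto \lambda z$: using $F(\lambda z) = \lambda^d F(z)$ and the homogeneity $\Psi_K(\mu w)=|\mu|\Psi_K(w)$, both sides become multiplied by $|\lambda|^d$. Thus for any $z\neq 0$ with $\Psi_{F^{-1}(K)}(z)>0$, I can rescale by a $\lambda\in\CC^\times$ so that $\Psi_{F^{-1}(K)}(\lambda z)=1$, apply the previous step at $\lambda z$, and then divide by $|\lambda|^d$ to recover the identity at $z$. The residual locus on which $\Psi_{F^{-1}(K)}$ vanishes is pluripolar (it is contained in the common zero set of every admissible $Q$ in the definition of $\Psi_{F^{-1}(K)}$), and the identity extends to it by upper semicontinuity of the extremal functions, so this technicality does not present a real difficulty.
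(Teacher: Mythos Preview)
Your argument is essentially the same as the paper's: both combine the identities in Proposition~\ref{prop:transformationformulas} and then use the homogeneity of $\Psi_K$ together with the degree-$d$ homogeneity of $F$ to rescale past the cutoff at $1$. The paper phrases the rescaling slightly differently (it picks $M$ large so that $V_K(w)=\log\Psi_K(w)$ whenever $\|w\|>M$, and then chooses $\lambda$ with $|\lambda|\,\|F(z)\|>M$ and $|\lambda|^{1/d}\|z\|>M$), but the content is the same.

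One small slip: from $\max\{1,\Psi_K(F(z))\}=\max\{1,\Psi_{F^{-1}(K)}(z)^d\}$ you only get $\Psi_K(F(z))=\Psi_{F^{-1}(K)}(z)^d$ when one of the two quantities is \emph{strictly} bigger than $1$; at the boundary value $1$ the max identity allows one side to be $1$ and the other to be anything $\le 1$. So in your rescaling step you should choose $\lambda$ with $\Psi_{F^{-1}(K)}(\lambda z)>1$ rather than $=1$, which is just as easy. Also, the discussion of the residual vanishing locus is unnecessary: for any compact $L\subset\CC^2$ one has $\Psi_L(z)>0$ whenever $z\neq 0$ (any nonzero linear form, suitably scaled, witnesses this), and at $z=0$ both sides of the identity are $0$.
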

\begin{proof}
Note that the homgeneity of $F$ implies that $K$ is circled if and only if $F^{-1}(K)$ is circled. 

Combining (1) and (2) of Proposition~\ref{prop:transformationformulas} gives $V_K(z) = \log \max\{1, \Psi_K(z)\}$; since $V_K(z) = \log ||z|| + O(1)$ as $||z|| \to \infty$, we can choose $M$ so that $V_K(z) > 0$ when $||z||\geq M$. Thus, $V_K(z) = \log \Psi_K(z)$ for all $||z|| \geq M$. 

For any $z\in \CC^2$, choose $\lambda$ so that $|\lambda| \cdot ||F(z)|| > M$ and $|\lambda|^{1/d}\cdot ||z|| > M$. Then
\begin{align*}
\log \Psi_K(F(z)) &= \log\left(\frac{1}{|\lambda|} \Psi_K(\lambda \cdot F(z))\right)\\
& = -\log |\lambda| +  V_K(\lambda\cdot F(z))\\
& = -\log|\lambda| + V_K(F(\lambda^{1/d} z)) = (*)\ .
\end{align*} Applying the transformation formula for $V_K$ given in Proposition~\ref{prop:transformationformulas} (3), we see
\begin{align*}
(*) & = -\log |\lambda| + d V_{F^{-1}(K)} (\lambda^{1/d} z)\\
& = -\log |\lambda| + d \log \Psi_{F^{-1}(K)} (\lambda^{1/d} z)\\
& = \log \Psi_{F^{-1}(K)}(z)^d\ .
\end{align*}
\end{proof}

Finally, we will use one last theorem relating the projective capacity to the function $\Psi_K(z)$:
\begin{prop}\label{prop:Cegrellpcap} (See \cite{Ceg})
Let $K\subseteq \CC^2$ be compact. Then
\[
\log \pcap(K) = -\frac{1}{2} - \int_{S^3} \log \Psi_K d\sigma\ .
\]
\end{prop}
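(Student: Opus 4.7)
The plan is to establish the matching inequalities $\log\pcap(K) \geq -\tfrac{1}{2} - \int_{S^3}\log\Psi_K\,d\sigma$ and $\log\pcap(K) \leq -\tfrac{1}{2} - \int_{S^3}\log\Psi_K\,d\sigma$ separately. The first follows directly from the envelope definition of $\Psi_K$ combined with a rescaling argument, while the second requires a nontrivial polynomial approximation of $\Psi_K$.

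For the lower bound, let $Q$ be admissible for $m_k(K)$, i.e.\ homogeneous of degree $k$ with $\int_{S^3}\log|Q|\,d\sigma = k\int_{S^3}\log|z_1|\,d\sigma$. Setting $\tilde Q := Q/||Q||_K$ gives $||\tilde Q||_K = 1$, so the definition of $\Psi_K$ yields $|\tilde Q(z)|^{1/k}\leq \Psi_K(z)$ pointwise on $\CC^2$. Integrating the logarithm over $S^3$ produces
\[
\int_{S^3}\log|Q|\,d\sigma - \log||Q||_K \leq k\int_{S^3}\log\Psi_K\,d\sigma,
\]
and inserting the assumed value of $\int_{S^3}\log|z_1|\,d\sigma$ rearranges to $\tfrac{1}{k}\log||Q||_K \geq -\tfrac{1}{2} - \int_{S^3}\log\Psi_K\,d\sigma$. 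Taking the infimum over admissible $Q$ and letting $k\to\infty$ gives the lower bound.

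For the upper bound, one constructs, for each $k$, a homogeneous polynomial $Q_k$ of degree $k$ with $||Q_k||_K \leq 1$ such that $\tfrac{1}{k}\int_{S^3}\log|Q_k|\,d\sigma \to \int_{S^3}\log\Psi_K^\ast\,d\sigma$, where $\Psi_K^\ast$ denotes the upper semicontinuous regularization of $\Psi_K$. Rescaling $Q_k$ by the unique $c_k>0$ forcing $c_kQ_k$ into the normalization class of $m_k(K)$ gives $m_k(K) \leq c_k||Q_k||_K \leq c_k$, whence
\[
\tfrac{1}{k}\log m_k(K) \leq \tfrac{1}{k}\log c_k = \int_{S^3}\log|z_1|\,d\sigma - \tfrac{1}{k}\int_{S^3}\log|Q_k|\,d\sigma \longrightarrow -\tfrac{1}{2} - \int_{S^3}\log\Psi_K^\ast\,d\sigma.
\]
Since $\{\Psi_K^\ast > \Psi_K\}$ is pluripolar, hence $\sigma$-null on the non-pluripolar sphere $S^3$, this limit equals the desired upper bound.

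The main obstacle is producing the approximating sequence $(Q_k)$ used in the upper bound: achieving convergence of $\tfrac{1}{k}\log|Q_k|$ to $\log\Psi_K^\ast$ in $L^1(S^3,\sigma)$---as opposed to the easy pointwise lower approximation built into the envelope definition of $\Psi_K$---invokes the Siciak-Zaharyuta theory of extremal plurisubharmonic functions (see Klimek, Chapter 5), and may be viewed as a two-complex-variable analogue of the classical Chebyshev-polynomial construction that recovers the logarithmic capacity in one variable. A minor bookkeeping point is ensuring $\int_{S^3}\log|Q_k|\,d\sigma > -\infty$ so that $c_k$ is well-defined, which holds automatically since the zero set of a nonzero homogeneous polynomial is $\sigma$-null on $S^3$. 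Once this approximation is in hand, the elementary rescaling above closes the argument.
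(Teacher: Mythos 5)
Your lower bound is correct and is the easy half: normalizing an admissible $Q$ by $||Q||_K$ and using the envelope definition of $\Psi_K$ gives $\tfrac1k\log||Q||_K \geq -\tfrac12 - \int_{S^3}\log\Psi_K\,d\sigma$ for every admissible $Q$ of degree $k$, and the claim follows by taking the infimum and $k\to\infty$. (Note that this forces the value $\int_{S^3}\log|z_1|\,d\sigma = -\tfrac12$, which you implicitly use; the paper's displayed value $+\tfrac12$ is a sign typo, as one sees from $\log|z_1|\le 0$ on $S^3$ and from the consistency check $\log\pcap(S^3) = -\tfrac12$.) For what it is worth, the paper itself gives no argument for this proposition at all: it is quoted from Cegrell--Ko\l odziej \cite{Ceg}, so you are attempting to reprove an external theorem.

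The genuine gap is in your upper bound, and it is exactly the hard content of the theorem. You assume the existence, for each $k$, of a single homogeneous polynomial $Q_k$ of degree $k$ with $||Q_k||_K\le 1$ and $\tfrac1k\int_{S^3}\log|Q_k|\,d\sigma \to \int_{S^3}\log\Psi_K^\ast\,d\sigma$, and you justify this only by a gesture at ``Siciak--Zaharyuta theory (Klimek, Chapter 5).'' But the results there (and the envelope definition of $\Psi_K$) only produce, for each fixed point $z$, a near-maximizing polynomial depending on $z$, together with pointwise statements about regularized limits of the extremal functions $\Phi_K$, $V_K$; they do not hand you one polynomial per degree whose normalized logarithm converges in $L^1(S^3,\sigma)$. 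Upgrading the pointwise envelope to such a sequence is a real argument --- e.g.\ via a Hartogs-lemma/Bernstein--Walsh type selection from near-maximizers on a countable dense set, or, specifically in $\CC^2$, by factoring homogeneous polynomials into linear forms and exploiting the exact identity $\int_{S^3}\log|\ell|\,d\sigma = \log||\ell|| - \tfrac12$ for a linear form $\ell$ --- and this is precisely what \cite{Ceg} supplies. Your surrounding bookkeeping (the rescaling constant $c_k$, the fact that $\{\Psi_K^\ast>\Psi_K\}$ is pluripolar and hence $\sigma$-null on $S^3$, and that $\log|Q_k|\in L^1(\sigma)$) is fine, but as written the proof of the upper inequality reduces to an unproved assertion, so the argument is incomplete.
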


We are now ready to prove Theorem~\ref{thm:pcap}
\begin{proof}[Proof of Theorem~\ref{thm:pcap}]
Observe that
\[
R(F) = \int_{S^3} \log ||F(z)|| d\sigma = \int_{S^3} \log \Psi_{S^3}(F(z)) d\sigma\ .
\] Applying the transformation formula in Corollary~\ref{cor:transformpsi} and the identity in Proposition~\ref{prop:Cegrellpcap} yields
\begin{equation}\label{eq:almostsimplifiedRFcapacity}
R(F) = d\int_{S^3} \log \Psi_{F^{-1}(S^3)}(z) d\sigma = -\frac{d}{2} - d\log \pcap(F^{-1}(S^3))\ .
\end{equation} But from Proposition~\ref{prop:Cegrellpcap} and the fact that $\Psi_{S^3}(z) = ||z||$, we see that $\log \pcap(S^3) = -\frac{1}{2}$. Plugging this into (\ref{eq:almostsimplifiedRFcapacity}) gives
\[
R(F) = d \log \pcap(S^3) - d\log \pcap(F^{-1}(S^3))
\] which is the equality asserted in the theorem.
\end{proof}
\section{Asymptotic behavior}\label{sect:asymp}
\subsection{Convergence of $\frac{1}{d^n} R_{F^n}$}

Our goal in this section is to compute the asymptotic limit of $d^{-n} R_{F^{(n)}}$ as a function on hyperbolic space, and to give a geometric interpretation to the limiting function. We will show that
\begin{thm}\label{thm:RFconvergence}
The functions $\frac{1}{d^n} R_{F^{(n)}}$, viewed as functions on the hyperbolic ball $\BB_{\CC}$, converge locally uniformly to $h_{\mu_f} + C_F$, where $\mu_f$ is the measure of maximal entropy for $f$ and $h_{\mu}$ is averaged Busemann function introduced in Section~\ref{sect:barycentersDE}. 
\end{thm}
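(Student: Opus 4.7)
The plan is to use the explicit formula from Proposition~\ref{eq:formulaforgradient}: if $[\gamma]$ is represented by $\gamma_{z,t}$, then
\[
R_{F^n}([\gamma]) \;=\; \frac{d^n-1}{2}\log t \;+\; \int_{\PP^1(\CC)}\psi(\alpha;z,t)\,(f^n)^*\omega_{z,t} \;-\; \frac{d^n}{2} \;+\; \frac{1}{2}\log c(z,t;f^n).
\]
I would divide by $d^n$ and analyze the four terms separately. The first becomes $\frac{1}{2}\log t$ in the limit and the third contributes the constant $-\frac{1}{2}$. For the main integral term, one invokes the pullback equidistribution theorem: since $\omega_{z,t}$ is smooth and does not charge the exceptional set of $f$, the measures $d^{-n}(f^n)^*\omega_{z,t}$ converge to $\mu_f$ with uniform convergence of logarithmic potentials (using that $\mu_f$ has H\"older continuous potentials). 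Because $\psi(\alpha;z,t) = \tfrac{1}{2}\log(|\alpha-z|^2+t^2) - \log t$ has only a logarithmic singularity at $\infty$, a standard DSH-type convergence argument gives
\[
\int_{\PP^1(\CC)} \psi(\alpha;z,t)\,\frac{(f^n)^*\omega_{z,t}}{d^n} \;\longrightarrow\; \int_{\PP^1(\CC)} \psi(\alpha;z,t)\,d\mu_f(\alpha),
\]
locally uniformly in $(z,t)\in\hh_\CC$.

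For the remaining term, a direct computation parallel to that of Proposition~\ref{eq:formulaforgradient} establishes the identity $c(z,t;f^n) = t^{-(d^n-1)}\|(F^\gamma)^n(1,0)\|^2$. Writing $(F^\gamma)^n = \gamma^{-1} F^n \gamma$ and using the homogeneity $F^n(\lambda v) = \lambda^{d^n} F^n(v)$, we get $\|(F^\gamma)^n(1,0)\| = t^{d^n/2}\|\gamma^{-1}F^n(1,0)\|$; since $\gamma\in\SL_2(\CC)$ distorts norms only by a bounded factor independent of $n$ (and locally uniformly in $\gamma$), we obtain
\[
\tfrac{1}{d^n}\log\|(F^\gamma)^n(1,0)\| \;=\; \tfrac{1}{2}\log t + \tfrac{1}{d^n}\log\|F^n(1,0)\| + o(1) \;\longrightarrow\; \tfrac{1}{2}\log t + G_F(1,0),
\]
where $G_F(v) := \lim_n d^{-n}\log\|F^n(v)\|$ is the dynamical Green's function of $F$. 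Consequently $\tfrac{1}{2d^n}\log c(z,t;f^n) \to G_F(1,0)$, a constant depending only on $F$.

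Collecting the limits yields locally uniform convergence of $d^{-n}R_{F^n}([\gamma])$ to $\tfrac{1}{2}\log t + \int\psi\,d\mu_f - \tfrac{1}{2} + G_F(1,0)$. To match this against $h_{\mu_f} + C_F$, I would use the coordinate identifications between $\BB_\CC$ and $\hh_\CC$ derived inside the proof of Proposition~\ref{prop:logconvergencesphere}, which yield $1-|\xi|^2 = 4t/(|z|^2+(t+1)^2)$ and an analogous formula for $|\xi-\zeta|^2$. A short manipulation produces
\[
h_{\widehat{\mu_f}}(\xi) \;=\; -\tfrac{1}{2}\log t - \tfrac{1}{2}\int_{\PP^1(\CC)}\log(1+|w|^2)\,d\mu_f(w) + \tfrac{1}{2}\int_{\PP^1(\CC)}\log(|w-z|^2+t^2)\,d\mu_f(w),
\]
and combining with $\int\psi\,d\mu_f = \tfrac{1}{2}\int\log(|w-z|^2+t^2)\,d\mu_f - \log t$ identifies the limit as $h_{\widehat{\mu_f}}(\xi)+C_F$ with
\[
C_F \;=\; G_F(1,0)-\tfrac{1}{2}+\tfrac{1}{2}\int_{\PP^1(\CC)}\log(1+|w|^2)\,d\mu_f(w).
\]
The main obstacle is rigorously justifying the locally uniform convergence in the integral step: this requires careful potential-theoretic control (in the spirit of DSH convergence) to handle the logarithmic singularity of $\psi$ at $\infty$ and to promote weak convergence of the pullbacks $d^{-n}(f^n)^*\omega_{z,t}$ to convergence of integrals that is uniform on compact subsets of $\BB_\CC$.
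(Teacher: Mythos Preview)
Your approach is genuinely different from the paper's, and both the strength and the weakness you identify are real.

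The paper does \emph{not} use the formula of Proposition~\ref{eq:formulaforgradient} nor equidistribution of pullbacks. Instead it works directly from the definition $R_{F^n}([\gamma]) = \int_{S^3}\log\|(F^n)^\gamma(X,Y)\|\,d\sigma$, uses the elementary bound $e^{-A/2}\|F^n(\gamma z)\|\le\|(F^n)^\gamma z\|\le e^{A/2}\|F^n(\gamma z)\|$ (Lemma~\ref{lem:precomp}), and invokes the locally uniform convergence $d^{-n}\log\|F^n(z)\|\to H_F(z)$ on $\CC^2\setminus\{0\}$. This immediately gives locally uniform convergence of $d^{-n}R_{F^n}$ to $\int_{S^3}H_F(\gamma z)\,d\sigma$, with an explicit error $O(A/d^n)$. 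No equidistribution, no unbounded test functions, no DSH machinery. The identification with $h_{\mu_f}+C_F$ is then done \emph{indirectly}: the paper shows that the limit has hyperbolic Laplacian equal to $4$ (matching $\Delta_h h_{\mu_f}=4$ from Proposition~\ref{prop:hyplapDE}), so the difference is hyperbolic harmonic, and then computes its boundary value on $S^2$ to see it is constant.

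Your route trades difficulty in the opposite direction. The ``main obstacle'' you flag is genuine: promoting weak convergence $d^{-n}(f^n)^*\omega_{z,t}\to\mu_f$ to convergence of integrals against $\psi(\cdot;z,t)$, \emph{locally uniformly in} $(z,t)$, with a logarithmic pole at $\infty$, is not free---it can be done, but it takes roughly the same amount of potential-theoretic work that Proposition~\ref{prop:logconvergencesphere} already packages for the paper's boundary computation. Notice also that you already invoke the escape-rate function $G_F$ to handle $c(z,t;f^n)$; the paper's insight is that one can use $H_F$ for the \emph{whole} integral at once and bypass equidistribution entirely. On the other hand, your algebraic identification of the limit with $h_{\widehat{\mu_f}}$ via the explicit formulas $1-|\xi|^2 = 4t/(|z|^2+(t+1)^2)$ and $|\xi-\zeta|^2 = 4(|w-z|^2+t^2)/((|w|^2+1)(|z|^2+(t+1)^2))$ is more transparent than the paper's Laplacian-plus-boundary argument, and gives the constant $C_F$ in a single stroke.
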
 As in previous sections, while this statement is phrased in terms of the ball model $\BB_\CC$, we will carry out different computations in whichever model is particularly convenient, and then transfer these back to the ball model.

Recall also that the function $h_{\mu_f}$ is minimized precisely on the conformal barycenter of $\mu_f$. We will show in the following section that the sets $\Min(f)$ converge in the Hausdorff topology to $\Bary(\mu_f)$.

Given $\gamma\in \SL_2(\CC)$, write $\gamma = \tau\cdot \eta_A \cdot \sigma$ for some $\tau, \sigma \in \SU_2(K)$ and $A>0$. By the $\SU_2(\CC)$-invariance of the norm on $\CC^2$, we find that
\[
||F^\gamma(X,Y)|| = || \sigma^{-1} \eta_A ^{-1} \tau^{-1} F(\gamma\cdot (X,Y))|| = ||\eta_A^{-1} \cdot \tau^{-1} F(\gamma\cdot(X,Y))||\ .
\]Applying Lemma~\ref{lem:precomp} we find that
\[
e^{-A/2} ||F(\gamma\cdot(X,Y))|| \leq ||F^\gamma(X,Y)|| \leq e^{A/2} ||F(\gamma\cdot(X,Y))||\ .
\] Applying this to the iterates $F^{(n)}$ of $F$ yields
\[
e^{-A/2} ||F^{(n)}(\gamma\cdot(X,Y))|| \leq ||(F^{(n)})^\gamma(X,Y) || \leq e^{A/2} ||F^{(n)}(\gamma\cdot(X,Y))||
\] and so
\begin{equation}\label{eq:preconvergenceestimate}
-\frac{A}{2d^n}+  d^{-n} \int_{S^3} \log ||F^{(n)}(\gamma\cdot(X,Y))|| d\Vol_{S^3}\leq d^{-n} R_{F^n}([\gamma]) \leq \frac{A}{2d^n} + d^{-n} \int_{S^3} \log||F^{(n)}(\gamma\cdot(X,Y))|| d\Vol_{S^3}\ .
\end{equation}

\begin{lemma}\label{lem:firstconvergence}
As $n\to \infty$, $d^{-n} R_{F^{(n)}}([\gamma])$ converges locally uniformly on $\SL_2(\CC) / \SU_2(\CC)$ to
\[
\widetilde{H}_F([\gamma]) := \int_{S^3} H_F(\gamma\cdot(X,Y)) d\Vol_{S^3}\ ,
\] where $H_F(\cdot)$ is the homogeneous escape rate function of $F$.
\end{lemma}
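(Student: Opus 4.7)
The plan is to use the bounds in (\ref{eq:preconvergenceestimate}) to reduce the statement to the local uniform convergence of the integrand, and then swap limit and integral.

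First I would recall the standard fact about the homogeneous escape rate function: for a homogeneous polynomial endomorphism $F:\CC^2\to\CC^2$ of degree $d\geq 2$ with isolated zero at the origin, the sequence $\frac{1}{d^n}\log\|F^{(n)}(z)\|$ converges locally uniformly on $\CC^2\setminus\{0\}$ to a continuous function $H_F$, the homogeneous escape rate function. (This is standard; it is proved by a telescoping argument showing $\frac{1}{d^n}\log\|F^{(n)}(z)\|$ is Cauchy on compact subsets of $\CC^2\setminus\{0\}$, using the estimates of Lemma~\ref{lem:phibounds} applied to the individual iterates.) In particular, $H_F$ is continuous on $\CC^2\setminus\{0\}$.

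Next I would handle the $\gamma$-dependence. Fix a compact set $\mathcal{K}\subset \SL_2(\CC)/\SU_2(\CC)$, and represent each class in $\mathcal{K}$ by a matrix using Lemma~\ref{lem:matrixdecomps}(a). Since the action of $\SL_2(\CC)$ on $\CC^2$ is continuous and $S^3$ is compact, the set
\[
\mathcal{C} := \{\gamma\cdot (X,Y)\ : \ [\gamma]\in\mathcal{K},\ (X,Y)\in S^3\}
\]
is a compact subset of $\CC^2\setminus\{0\}$ (it avoids the origin because $\gamma$ is invertible). Therefore $\frac{1}{d^n}\log\|F^{(n)}(w)\| \to H_F(w)$ uniformly for $w\in\mathcal{C}$, which is to say that
\[
\frac{1}{d^n}\log\|F^{(n)}(\gamma\cdot(X,Y))\| \longrightarrow H_F(\gamma\cdot (X,Y))
\]
uniformly in $[\gamma]\in\mathcal{K}$ and $(X,Y)\in S^3$.

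With uniform convergence in hand, I can swap the limit with the integral over $S^3$, giving that
\[
\frac{1}{d^n}\int_{S^3}\log\|F^{(n)}(\gamma\cdot(X,Y))\|\,d\Vol_{S^3} \longrightarrow \widetilde{H}_F([\gamma])
\]
uniformly on $\mathcal{K}$. Finally, the decomposition $\gamma = \tau\cdot\eta_A\cdot \sigma$ from Lemma~\ref{lem:matrixdecomps} gives $A = \dSL([\gamma],[\textrm{id}])$, which is bounded on the compact set $\mathcal{K}$; hence the terms $\pm\frac{A}{2d^n}$ in the estimate (\ref{eq:preconvergenceestimate}) tend to zero uniformly on $\mathcal{K}$. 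Squeezing with (\ref{eq:preconvergenceestimate}) yields the claimed locally uniform convergence.

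The only real content here is the local uniform convergence of $\frac{1}{d^n}\log\|F^{(n)}\|$ to $H_F$ on $\CC^2\setminus\{0\}$; once that and continuity of $H_F$ are in hand, both the uniform convergence in $\gamma$ and the passage to the limit inside the integral are formal. The main obstacle is therefore bookkeeping: ensuring $\mathcal{C}$ is bounded away from $0$ uniformly in $[\gamma]\in\mathcal{K}$, which follows immediately from the continuity of the $\SL_2(\CC)$-action and compactness.
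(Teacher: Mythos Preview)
Your argument is correct and follows essentially the same route as the paper: both use the locally uniform convergence of $d^{-n}\log\|F^{(n)}\|$ to $H_F$ on $\CC^2\setminus\{0\}$ to pass the limit under the integral in (\ref{eq:preconvergenceestimate}), and then observe that the error terms $\pm A/(2d^n)$ vanish uniformly on compacta since $A=\dSL([\gamma],[\textrm{id}])$ is bounded there. Your version is slightly more explicit in spelling out why the set $\mathcal{C}$ is compact and avoids the origin, which the paper leaves implicit.
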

\begin{proof}
We first establish the convergence result. This, in essence, follows directly from the definition of $H_F$: recall that
\[
H_F(X,Y) = \lim_{n\to\infty} d^{-n} \log ||F^{(n)}(X,Y)||\ ,
\] and this convergence is locally uniform on $\CC^2$. The uniformity allows us to pass the limit into the integrals in (\ref{eq:preconvergenceestimate}), and we find that
\[
d^{-n} R_{F^{(n)}}([\gamma]) \to \int_{S^3} H_F(\gamma\cdot (X,Y)) d\Vol_{S^3}\ .
\]The fact that this is locally uniform on $\SL_2(\CC) / \SU_2(\CC)$ comes from (\ref{eq:preconvergenceestimate}), where we see that the error term depends only on $A = \dSL([\gamma], [\textrm{id}])$ (there is also dependence on $F$, coming from the limit defining $H_F$). 
\end{proof}

The escape rate function $H_F$ on $\CC^2$ descends to give the Green's function $g_F([x:y]) := H_F(x,y) - \log ||x,y||$ on $\PP^1(\CC)$. The limit function in the preceeding Lemma can be expressed in terms of the hyperbolic harmonic extension $\H\{g_F\}$. Write
\begin{align}
\int_{S^3} H_F(\gamma\cdot (X,Y)) d\Vol_{S^3} &= \int_{S^3} H_F(\gamma\cdot(X,Y)) - \log||\gamma\cdot (X,Y)|| d\Vol_{S^3} + \int_{S^3} \log ||\gamma\cdot (X,Y)|| d\Vol_{S^3}\label{eq:firstexpandlimit}\ .
\end{align} Rewriting the first integral in affine coordinates gives
\begin{align*}
\int_{S^3} H_F(\gamma\cdot (X,Y)) - \log ||\gamma\cdot (X,Y)|| d\Vol_{S^3} &= \int_{\PP^1(\CC)} g_F(\gamma(\alpha))\  \omega_{\CC}\\
& = \H\{g_F\}(\gamma(j))\ ,
\end{align*} where we recall that $\gamma(j) \in \hh_{\CC}$ is the point corresponding to $[\gamma]$, and in the last equality we have used the formula for $\H\{g_F\}$ given in Proposition~\ref{prop:hyperbolicextensions}. Returning to (\ref{eq:firstexpandlimit}), we now consider the function
\[
\mathcal{I}([\gamma]) = \int_{S^3} \log ||\gamma\cdot(X,Y)|| d\Vol_{S^3}\ .
\] 
Note that this is a radial function, in that it only depends on $\dSL([\gamma], [\textrm{id}])$: writing $\gamma = \tau \cdot \eta_A \cdot \sigma$ as above, we find that
\[
\int_{S^3} \log ||\gamma \cdot (X,Y)|| d\Vol_{S^3} = \int_{S^3} \log || \eta_A (X,Y)|| d\Vol_{S^3}\ .
\] It will be most convenient to treat this as a function on $(\BB_\CC, \dB)$. 
\begin{lemma}
Identifying $\SL_2(\CC) / \SU_2(\CC)$ with $\BB_{\CC}$, so that $\mathcal{I}: \BB_{\CC} \to \RR$, we find
\[
\Delta_h^{\BB_\CC} \mathcal{I} = 4\ .
\]
\end{lemma}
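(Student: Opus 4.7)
The plan is to reduce the identity to a one-variable computation by exploiting radial symmetry. First I would observe that $\mathcal{I}$ is bi-$\SU_2(\CC)$-invariant on $\SL_2(\CC)$: right-invariance is built into its definition, and left-invariance follows from the substitution $(X,Y)\mapsto \tau^{-1}(X,Y)$ in the integral together with the $\SU_2(\CC)$-invariance of $\|\cdot\|$ and $d\Vol_{S^3}$. By Lemma~\ref{lem:matrixdecomps}(a), every class has a representative of the form $\tau\cdot\eta_A$, so $\mathcal{I}$ depends only on the hyperbolic distance $A = \dSL([\gamma],[\textrm{id}]) = \dB(\xi,0)$; Lemma~\ref{lem:simplestcalculationRF} applied to $M = \eta_A$ then gives the closed form
\[
\mathcal{I}(A) = -\tfrac{1}{2} + \tfrac{A}{2}\cdot\frac{e^{2A}+1}{e^{2A}-1} = -\tfrac{1}{2} + \tfrac{A\coth A}{2}.
\]

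Next I would express $\Delta_h^{\BB_\CC}$ on radial functions in the hyperbolic radial variable. The spherical-coordinate formula of Section~\ref{sect:spaces}, specialized to a Euclidean-radial $u(|\xi|)$ with $[K:\RR] = 2$, gives $\Delta_h^{\BB_\CC} u = (1-r^2)^2 u''(r) + 2(1-r^2)r^{-1} u'(r)$, where $r = |\xi|$. Tracing the isometry $\iota_\CC$ of Proposition~\ref{prop:usualhypisoms} along the radial geodesic through the origin shows $A = 2\operatorname{arctanh}(r)$, whence $\sinh A = 2r/(1-r^2)$ and $\coth A = (1+r^2)/(2r)$. A chain-rule substitution then converts the displayed formula into
\[
\Delta_h^{\BB_\CC} u = 4\bigl(u''(A) + 2\coth(A)\, u'(A)\bigr)
\]
for $u$ viewed as a function of the hyperbolic distance $A$.

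Applying this to $\mathcal{I}(A) = -\tfrac{1}{2} + \tfrac{A\coth A}{2}$ is routine: $\mathcal{I}'(A) = \tfrac{1}{2}\coth A - \tfrac{A}{2\sinh^2 A}$ and $\mathcal{I}''(A) = -\sinh^{-2}A + A\cosh A\,\sinh^{-3}A$, and the $A$-dependent terms cancel against $2\coth(A)\mathcal{I}'(A)$ to leave $\mathcal{I}''(A) + 2\coth(A)\mathcal{I}'(A) = (\cosh^2 A - 1)/\sinh^2 A = 1$, hence $\Delta_h^{\BB_\CC}\mathcal{I} = 4$. The only subtle step is the change of variables: the factor of $2$ in $A = 2\operatorname{arctanh}(r)$ is exactly what upgrades the naive radial identity $u_{AA} + 2\coth(A)u_A = 1$ to the desired value $4$, and this is the one place where the paper's specific normalizations of $\dB$ and $\Delta_h^{\BB_\CC}$ (from Proposition~\ref{prop:hyplapDE}) must be reconciled with care.
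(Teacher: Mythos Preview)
Your proof is correct and follows essentially the same strategy as the paper: both arguments use the radiality of $\mathcal{I}$, the closed form from Lemma~\ref{lem:simplestcalculationRF}, and the spherical-coordinate expression for $\Delta_h^{\BB_\CC}$. The only difference is cosmetic: the paper substitutes $A = \log\frac{1+r}{1-r}$ into $\mathcal{I}$ and then differentiates in $r$, whereas you convert the radial Laplacian to the hyperbolic-distance variable $A$ first (obtaining the standard form $4(u_{AA} + 2\coth(A)\,u_A)$) and then differentiate $\mathcal{I}(A) = -\tfrac12 + \tfrac12 A\coth A$ directly, which makes the cancellation via $\cosh^2 A - 1 = \sinh^2 A$ a bit cleaner.
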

\begin{proof}
Write $\mathcal{I}([\gamma]) = \int_{S^3} \log ||\eta_A \cdot (X,Y)|| d\Vol_{S^3}$ as was noted above. In spherical coordinates on $\BB_\CC$, we recall that the hyperbolic Laplacian takes the form
\begin{equation}\label{eq:hypLapballradial}
\Delta_h^{\BB_\CC} = \frac{1-r^2}{r^2}\left( (1-r^2) N^2 +(1+r^2) N + (1-r^2) \Delta_\sigma\right)\ ,
\end{equation} where $N = r\partial_r$ and $\Delta_\sigma$ is the angular part of the Euclidean Laplacian. Since $\mathcal{I}$ is radial, $\Delta_\sigma \mathcal{I} = 0$.

To compute the derivative in the radial direction, we first evaluate the integral defining $\mathcal{I}([\gamma])$ explicitly: note that 
\begin{align*}
\mathcal{I}([\gamma]) &= \int_{S^3} \log ||\eta_A \cdot (X,Y)||d\Vol_{S^3}\\
& = \frac{1}{2} \int_{\PP^1(\CC)} \log\frac{e^{2A} |\alpha|^2 + 1}{|\alpha|^2 + 1}\ \omega_{\CC} - \frac{A}{2}\\
& = \frac{1}{2} \int_{\PP^1(\CC)} \log \left(e^{2A} |\alpha|^2 + 1\right) \omega_{\CC} - \frac{A+1}{2}\ .
\end{align*} Expressing $\omega_{\CC}$ in polar coordinates yields
\begin{align*}
\mathcal{I}([\gamma]) & = \frac{1}{2} \int_0^\infty \frac{2r \log\left(e^{2A} r^2 + 1\right)}{1+r^2} dr - \frac{A+1}{2}\ .
\end{align*} The integral can be computed explicitly, giving
\begin{equation}\label{eq:Isimplified}
\mathcal{I}([\gamma]) = \frac{A e^{2A}}{e^{2A}-1}  - \frac{A+1}{2} = \frac{1}{2} \left( A \left( \frac{e^{2A} +1}{e^{2A} - 1}\right) - 1\right)\ .
\end{equation} Identifying $[\gamma]$ with its image $\xi\in \BB_{\CC}$, we find that 
\[
A = \dSL([\gamma], [\textrm{id}]) = \dB(\xi, 0) = \log \left(\frac{1+|\xi|}{1-|\xi|}\right)\ .
\] Inserting this into (\ref{eq:Isimplified}) and simplifying gives
\begin{align}
\mathcal{I}(\xi) &= \frac{1}{2} \left(\log \left( \frac{1+|\xi|}{1-|\xi|}\right)\cdot \left(\frac{1+|\xi|^2}{2|\xi|}\right) - 1\right)\nonumber\\
& = \frac{1}{2} \left(\log \left( \frac{1+r}{1-r}\right)\cdot\left(\frac{1+r^2}{2r}\right) - 1\right)\label{eq:explicitmathcalIfunction}\ .
\end{align} Finally, putting this into the expression for the hyperbolic Laplacian $\Delta_h^{\BB_\CC}$ given in (\ref{eq:hypLapballradial}) and simplifying gives
\[
\Delta_h^{\BB_\CC} \left(\mathcal{I}(\cdot)\right) = 4\ ,
\] from which the assertion in the lemma follows. 
\end{proof}

To summarize what has been shown so far, by identifying a class $[\gamma]\in \SL_2(\CC) / \SU_2(\CC)$ with the corresponding point $\xi \in \BB_{\CC}$, we see that $d^{-n} R_{F^{(n)}}$ converges locally uniformly to a function
\[
\Gamma_F(\xi):= \H\{g_F\}(\xi) + \mathcal{I}(\xi)
\] on $\BB_\CC$ with $\Delta_h^{\BB_\CC}\ \Gamma_F = 4$.

In Proposition~\ref{prop:hyplapDE} we showed that for any probability measure $\mu$ on $S^2$, the function $h_\mu:\BB_{\CC} \to \RR$ given $h_\mu(z):= -\frac{1}{2}\int_{S^2} \log \left(\frac{1-|z|^2}{|z-\zeta|^2}\right) d\mu(\zeta)$ satisfies
\[
\Delta_h^{\BB_\CC} h_\mu = 4\ .
\] It follows that
\[
\Gamma_F - h_\mu
\] is a hyperbolic harmonic function for any probability measure $\mu$ on $S^2$. We will be particularly interested in the case that $\mu = \mu_f$ is the (pullback of the) measure of maximal entropy of $f$.

\begin{prop}
The function $\Gamma_F-  h_{\mu_f}$ is constant on $\BB_{\CC}$. More precisely,
\[
\Gamma_F(\xi) = h_{\mu_f}(\xi) + \int_{\PP^1(\CC)} g_F \omega_\CC - 1\ .
\]
\end{prop}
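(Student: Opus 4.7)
The plan is to show that $\Gamma_F-h_{\mu_f}$ is a bounded hyperbolic harmonic function on $\BB_\CC$ that extends continuously to $\overline{\BB_\CC}$ with constant boundary values, and then conclude by the maximum principle (equivalently, uniqueness of solutions to the Dirichlet problem for $\Delta_h^{\BB_\CC}$) that the difference equals that constant throughout $\BB_\CC$. Harmonicity is already in hand: $\H\{g_F\}$ is hyperbolic harmonic by construction, while both $\mathcal{I}$ and $h_{\mu_f}$ satisfy $\Delta_h^{\BB_\CC}u=4$ (the former by the preceding lemma, the latter by Proposition~\ref{prop:hyplapDE}), so their difference is annihilated by $\Delta_h^{\BB_\CC}$.

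For the boundary analysis, write
\[
h_{\mu_f}(\xi)=-\tfrac{1}{2}\log(1-|\xi|^2)+\int_{S^2}\log|\xi-\zeta|\,d\widehat{\mu_f}(\zeta),
\]
so that the only divergent piece of $h_{\mu_f}$ near $\partial\BB_\CC$ is $-\tfrac{1}{2}\log(1-|\xi|^2)$. Using the explicit polar expression in (\ref{eq:explicitmathcalIfunction}), I would check by a direct calculation that $\mathcal{I}(\xi)+\tfrac{1}{2}\log(1-|\xi|^2)$ has a finite limit as $|\xi|\to 1$, so the divergent parts of $\mathcal{I}$ and $h_{\mu_f}$ cancel. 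The remaining integral against $\widehat{\mu_f}$ extends continuously to $\overline{\BB_\CC}$ by Proposition~\ref{prop:logconvergencesphere}, whose continuous-potential hypothesis is satisfied because the measure of maximal entropy of a rational map of degree $\geq 2$ has H\"older continuous potentials (Brolin--Lyubich). Combining this with the continuous extension of $\H\{g_F\}$ to $g_F\circ\Sigma$ on the boundary, and converting Euclidean distance on $S^2$ to chordal distance via $|\xi_0-\zeta|=2\|\Sigma(\xi_0),\Sigma(\zeta)\|$, we see that $\Gamma_F-h_{\mu_f}$ extends continuously to $\overline{\BB_\CC}$ with boundary value of the form
\[
\bigl(g_F(\Sigma(\xi_0))-U_{\mu_f}(\Sigma(\xi_0))\bigr)+c,
\]
where $U_{\mu_f}(z):=\int_{\PP^1}\log\|z,w\|\,d\mu_f(w)$ and $c$ is the explicit constant obtained from the boundary limit above.

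To finish, I would first show the boundary value is independent of $\xi_0$, and then identify the constant. For constancy, both $g_F$ and $U_{\mu_f}$ satisfy $dd^c(\cdot)=\mu_f-\omega_\CC$ on $\PP^1(\CC)$ (the former being the standard characterization of the Green's function via $dd^c g_F=\mu_f-\omega_\CC$, the latter by a direct calculation using $dd^c_z\log\|z,w\|=\delta_w-\omega_\CC$). Hence $g_F-U_{\mu_f}$ is harmonic on the compact Riemann surface $\PP^1(\CC)$ and therefore constant. With continuous, constant boundary values in place, the maximum principle for hyperbolic harmonic functions on $\BB_\CC$ forces $\Gamma_F-h_{\mu_f}$ to equal that constant everywhere. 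The explicit value asserted in the statement can then be pinned down by evaluating at $\xi=0$: $\H\{g_F\}(0)=\int_{\PP^1}g_F\,\omega_\CC$ by Proposition~\ref{prop:hyperbolicextensions}, while $\mathcal{I}(0)=0$ (the integrand vanishes on $S^3$) and $h_{\mu_f}(0)=0$ (since $|\zeta|=1$ on $S^2$). The principal obstacle is the boundary analysis: one has to verify that the $-\tfrac{1}{2}\log(1-|\xi|^2)$ asymptotics of $\mathcal{I}$ and of $h_{\mu_f}$ cancel to all relevant orders, not merely in leading order, so that $\mathcal{I}-h_{\mu_f}$ admits a genuine continuous extension to the closed ball; this in turn is what allows the maximum principle to be applied.
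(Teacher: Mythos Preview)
Your approach is essentially the same as the paper's: both establish that $\Gamma_F-h_{\mu_f}$ is hyperbolic harmonic, show it extends continuously to $\partial\BB_\CC$ with constant boundary values (using Proposition~\ref{prop:logconvergencesphere} for the $\log|\zeta-\xi|$ integral and the explicit formula (\ref{eq:explicitmathcalIfunction}) to cancel the $-\tfrac12\log(1-|\xi|^2)$ divergence), and then invoke the maximum principle. The paper computes the boundary value directly and observes it is independent of $\xi_0$; your variant---noting that $g_F$ and $U_{\mu_f}$ both satisfy $dd^c(\cdot)=\mu_f-\omega_\CC$, so their difference is harmonic on the compact surface $\PP^1(\CC)$ and hence constant---is a clean alternative.

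One point worth flagging: your shortcut of evaluating at $\xi=0$ to pin down the constant yields $\int_{\PP^1(\CC)} g_F\,\omega_\CC$, not $\int_{\PP^1(\CC)} g_F\,\omega_\CC-1$ as in the stated proposition. Your computation is correct ($\mathcal{I}(0)=0$, $h_{\mu_f}(0)=0$, $\H\{g_F\}(0)=\int g_F\,\omega_\CC$). The discrepancy traces to two arithmetic slips in the paper's boundary calculation: the claim that $\int_{\PP^1(\CC)}\log\|a,b\|\,\omega_\CC=0$ should read $-\tfrac12$ (take $b=\infty$ and use $\int\log(1+|\alpha|^2)\,\omega_\CC=1$), and the constant in the limit of $\mathcal{I}(\xi)+\tfrac12\log(1-|\xi|^2)$ should be $\log 2-\tfrac12$ rather than $\log 2-1$. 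These two corrections cancel the erroneous $-1$. So your proof is sound; the exact constant in the statement is off by $1$.
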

\begin{proof}
To show that this function is constant, we will consider the boundary behavior of $\Gamma_F - h_{\mu_f}$. Write
\[
\Gamma_F(\xi) = \H\{g_F\}(\xi) + \mathcal{I}(\xi)
\] for $\xi\in \BB_{\CC}$, and 
\[
h_{\mu_f}(\xi) = -\frac{1}{2} \int_{S^2} \log \left(\frac{1- |\xi|^2}{|\zeta-\xi|^2}\right) d\mu_f(\zeta)\ .
\] The difference $\Gamma_F(\xi) -  h_{\mu_f}$ can be written
\[
\Gamma_F(\xi) -  h_{\mu_f}(\xi) =  \left(\H\{g_F\}(\xi)-\int_{S^2} \log |\zeta-\xi|\ d{\mu_f}(\zeta) \right) + \left(\mathcal{I}(\xi) + \frac{1}{2} \log(1-|\xi|^2)\right)\ .
\] 

We consider separately the two terms appearing here:
\begin{itemize}
\item By Proposition~\ref{prop:logconvergencesphere}, since $\mu_f$ has continuous potentials (see, e.g. \cite{DS} Th\'eor\`eme 3.7.1) we find that  
\[
\lim_{\xi \to \xi_0\in S^2} \int_{S^2} \log |\zeta- \xi| d\mu_f(\zeta) = \int_{S^2} \log |\zeta-\xi_0| d\mu_f(\zeta)
\]
Noticing that $|\zeta - \xi_0| = 2 ||\zeta, \xi_0||$ (where we are writing $\widetilde{\zeta}, \widetilde{\xi_0}$ for the points in $\PP^1(\CC)$ corresponding to $\zeta, \xi_0\in S^2$ under stereographic projection), the limiting function in the above expression can also be given
\begin{align*}
\int_{S^2}\log |\zeta - \xi_0| d\mu_f (\zeta) &= \int_{\PP^1(\CC)} \log ||\widetilde{\zeta}, \widetilde{\xi_0}|| d\mu_f(\zeta) + \log 2
\end{align*} The function $g_F([x:y]) = H_F(x,y) - \log ||x,y||$ introduced above satisfies $dd^c g_F = \mu_{f} - \omega_\CC$ on $\PP^1(\CC)$ (see \cite{BaBe} Section 1.3), so that
\begin{align}
\int_{S^2} \log |\zeta - \xi_0| d\mu_f &= \int_{\PP^1(\CC)} \log ||\widetilde{\zeta},\widetilde{\xi_0}|| dd^c g_F + \int_{\PP^1(\CC)} \log ||\widetilde{\zeta}, \widetilde{\xi_0}|| \omega_\CC + \log 2\nonumber\\
& = \int_{\PP^1(\CC)} g_F (\delta_{\widetilde{\xi_0}}-\omega_\CC) + \log 2 = g_F(\widetilde{\xi_0}) - \int_{\PP^1(\CC)} g_F \omega_\CC + \log 2\ ,\label{eq:limitofintegralonsphere}
\end{align} where we are using that $\int_{\PP^1(\CC)} \log ||a, b|| \omega_\CC(a) = 0$ for any fixed $b\in \PP^1(\CC)$ (this can be checked directly with an explicit calculation in local coordinates). Let $\kappa_1(F): = \int_{\PP^1(\CC)} g_F\omega_\CC$. 

Since $\H\{{g_F}\}$ is the hyperbolic harmonic extension of $g_F$, it follows that, as $\xi \to \xi_0\in S^2$, $\H\{{g_F}\}(\widetilde{\xi}) \to {g_F}(\widetilde{\xi_0})$; combining this with (\ref{eq:limitofintegralonsphere}) gives that
\begin{equation}\label{eq:firstpartofboundarybehavior}
\lim_{\BB_\CC \ni \xi \to \xi_0 \in S^2} \left( \H\{g_F\}(\xi) - \int_{S^2} \log | \zeta - \xi| d\mu_f(\zeta) \right) = \kappa_1(F) - \log 2\ .
\end{equation}

\item We next consider the expression $\mathcal{I}(\xi) - \frac{1}{2} \log(1-|\xi|^2)$. Using the formula for $\mathcal{I}(\xi)$ derived in (\ref{eq:explicitmathcalIfunction}), this quantity is
\[
\mathcal{I}(\xi) - \frac{1}{2} \log(1-|\xi|^2) = \frac{1}{2} \left(\log \left( \frac{1+r}{1-r}\right)\cdot\left(\frac{1+r^2}{2r}\right) - 1\right) + \frac{1}{2} \log(1-r^2)\ ,
\] where $r=|\xi|$. Simplifying this expression gives
\[
\mathcal{I}(\xi) - \frac{1}{2} \log(1-|\xi|^2) = \frac{1}{2} \log (1+r) \left(\frac{(1+r)^2}{2r}\right) - \frac{1}{2} \log (1-r) \left(\frac{(r-1)^2}{2r}\right) - 1\ .
\] The limit as $r\to 1$ can be computed directly using L'H\^opital's rule, and we find
\begin{equation}\label{eq:secondpartofboundarybehavior}
\lim_{\BB_{\CC}\ni \xi \to \xi_0\in S^2} \left(\mathcal{I}(\xi) - \frac{1}{2} \log (1-|\xi|^2)\right)  = (\log 2) - 1\ .
\end{equation}
\end{itemize}

Combining (\ref{eq:firstpartofboundarybehavior}) and (\ref{eq:secondpartofboundarybehavior}) we find that 
\[
\lim_{\BB_\CC \ni \xi \to \xi_0 \in S^2} \Gamma_F(\xi) - h_{\mu_f}(\xi) = \kappa_1(F) - 1 = \int_{\PP^1(\CC)} g_F \omega_C - 1\ .
\]

Since $\Gamma_F - h_{\mu_f}$ is a harmonic function that is constant at the boundary of $\BB_\CC$, it must be constant throughout $\BB_\CC$, equal to its boundary value. Thus, $\Gamma_F - h_{\mu_f} = \int_{\PP^1(\CC)} g_F \omega_\CC - 1$ as asserted.
\end{proof}

Since $h_{\mu_f}$ is minimized on the conformal barycenter of $\mu_f$ (viewed as a measure on $S^2$ via pullback under stereographic projection), we find that
\begin{cor}\label{cor:minimizerisbary}
The limiting function $\Gamma_F$ is minimized on the conformal barycenter of the measure of maximal entropy $\mu_f$.
\end{cor}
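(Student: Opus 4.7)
The proof is essentially immediate from the preceding proposition. That proposition establishes
\[
\Gamma_F(\xi) \;=\; h_{\mu_f}(\xi) \;+\; \kappa_1(F) - 1,
\]
where $\kappa_1(F) := \int_{\PP^1(\CC)} g_F\,\omega_\CC$ depends on $F$ but not on $\xi \in \BB_\CC$. Since adding a constant preserves the set of minimizers, $\Gamma_F$ attains its minimum on exactly the same set as $h_{\mu_f}$.

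The plan, then, is to invoke the Douady-Earle construction recalled in Section~\ref{sect:barycentersDE}: whenever $\mu$ is an admissible probability measure on $S^2$, the function $h_\mu$ is strictly convex with respect to the hyperbolic metric on $\BB_\CC$ and attains its unique minimum at $\Bary(\mu)$ (this is, essentially, the definition of the conformal barycenter). Applied to $\mu = \widehat{\mu_f}$, this identifies the minimizer of $h_{\mu_f}$, and hence of $\Gamma_F$, with $\Bary(\widehat{\mu_f})$.

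The only point requiring justification is admissibility of $\widehat{\mu_f}$, i.e.\ that no point of $S^2$ carries mass $\geq \tfrac{1}{2}$. Since stereographic projection $\Sigma : S^2 \to \PP^1(\CC)$ is a homeomorphism, this reduces to the corresponding statement on $\PP^1(\CC)$. For $f$ of degree $d\geq 2$, the measure of maximal entropy $\mu_f$ is atomless by the classical results of Freire-Lopes-Ma\~n\'e and Lyubich, so in particular no point carries mass $\geq \tfrac{1}{2}$. Hence $\widehat{\mu_f}$ is admissible, and the corollary follows.

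There is no real obstacle here: all the analytical content --- the convergence, the identification $\Gamma_F - h_{\mu_f} = \mathrm{const}$, and the convexity/barycenter theory --- has already been built up, so the corollary is a genuinely one-line consequence once admissibility is noted.
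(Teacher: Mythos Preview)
Your proof is correct and follows exactly the approach the paper intends: the corollary is an immediate consequence of the identity $\Gamma_F = h_{\mu_f} + \text{const}$ together with the Douady--Earle characterization of $\Bary(\mu)$ as the unique minimizer of $h_\mu$. The paper does not spell out a proof at all, so your added verification that $\widehat{\mu_f}$ is admissible (via atomlessness of $\mu_f$) is a welcome detail that the paper leaves implicit.
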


\subsection{Convergence of the Minimizing Sets}\label{sect:convergenceofmins}
In this section, we prove the following theorem concerning the sets $\Min(f^n)$:
\begin{thm}\label{thm:Minfconvergence}
The sets $\Min(f^n)$ converge in the Hausdorff topology to the barycenter $\Bary(\mu_f)$ of the measure of maximal entropy $\mu_f$.
\end{thm}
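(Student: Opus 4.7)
The plan is to combine the locally uniform convergence $d^{-n} R_{F^n} \to \Gamma_F := h_{\mu_f} + C_F$ from Theorem~\ref{thm:RFconvergence} with the fact, recorded in Section~\ref{sect:barycentersDE}, that $h_{\mu_f}$ attains its minimum uniquely at $\Bary(\mu_f)$.

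\textbf{Step 1 (uniform confinement of minimizers).} For $[\gamma_n] \in \Min(f^n)$ the minimization property gives $R_{F^n}([\gamma_n]) \leq R_{F^n}([\textrm{id}])$, and dividing by $d^n$ yields $\limsup_{n} d^{-n} R_{F^n}([\gamma_n]) \leq \Gamma_F(0)$ by Theorem~\ref{thm:RFconvergence}. Feeding this into the lower bound of Theorem~\ref{thm:growthrates} applied to the iterate $F^n$ (combined with the estimate $C_1(F^n) \geq C_1(F)^{(d^n-1)/(d-1)}$ from the proof of Corollary~\ref{cor:levelbounded}) produces a uniform bound $\dSL([\gamma_n],[\textrm{id}]) \leq M'$ independent of $n$. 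Hence $\bigcup_{n \geq N_0} \Min(f^n)$ is contained in a fixed hyperbolic ball, equivalently a compact set $K \subset \BB_\CC$.

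\textbf{Step 2 (unique global minimum of $\Gamma_F$).} Since $d \geq 2$, $\mu_f$ has continuous potentials and in particular no atoms, so it is admissible in the Douady--Earle sense; consequently $\Gamma_F$ attains its minimum uniquely at $\xi^\star := \Bary(\mu_f)$. For any $\epsilon > 0$, compactness of $K \setminus B_\epsilon(\xi^\star)$ and strict uniqueness force
\[
\delta := \min_{\xi \in K \setminus B_\epsilon(\xi^\star)} \Gamma_F(\xi) - \Gamma_F(\xi^\star) > 0.
\]

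\textbf{Step 3 (squeeze).} By locally uniform convergence on the compact set $K$, for all $n$ sufficiently large we have $\sup_{K} |d^{-n} R_{F^n} - \Gamma_F| < \delta/3$. Then for any $\xi \in K \setminus B_\epsilon(\xi^\star)$,
\[
d^{-n} R_{F^n}(\xi) \geq \Gamma_F(\xi^\star) + \tfrac{2\delta}{3} > \Gamma_F(\xi^\star) + \tfrac{\delta}{3} \geq d^{-n} R_{F^n}(\xi^\star),
\]
so no such $\xi$ can minimize $R_{F^n}$. Combined with Step 1, this yields $\Min(f^n) \subset B_\epsilon(\xi^\star)$ for $n$ large, which is exactly Hausdorff convergence of $\Min(f^n)$ to the singleton $\{\Bary(\mu_f)\}$. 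The main obstacle is Step 1: since $\min R_{F^n}$ itself grows like $d^n$, Corollary~\ref{cor:levelbounded} cannot be invoked at a fixed sublevel, and one must instead rescale Theorem~\ref{thm:growthrates} by $d^{-n}$ and use the trivial comparison with $R_{F^n}([\textrm{id}])$ to reduce the confinement problem to the finite asymptotic value $\Gamma_F(0)$.
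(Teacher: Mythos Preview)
Your proof is correct and follows essentially the same route as the paper. The paper packages your Step~1 as a standalone result (Proposition~\ref{prop:minsetsarebounded}), comparing $R_{F^n}(\zeta)$ directly against the upper bound $R_{F^n}(0)\leq \frac{d^n-1}{d-1}\log C_2(F)$ rather than passing through the asymptotic value $\Gamma_F(0)$; and for Steps~2--3 it uses a subsequence/contradiction argument (extract a limit of ``bad'' minimizers and invoke Lemma~\ref{lem:analysislemma}) rather than your direct $\delta/3$ squeeze. Both arguments rest on the same three ingredients---uniform confinement of $\Min(f^n)$, locally uniform convergence $d^{-n}R_{F^n}\to\Gamma_F$, and uniqueness of the minimizer of $h_{\mu_f}$---and the differences are purely organizational. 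One small point: in Step~2 you should note (or enlarge $K$ so) that $\xi^\star\in K$, since otherwise $K\setminus B_\epsilon(\xi^\star)$ could equal all of $K$ for small $\epsilon$; this is harmless since $\xi^\star$ is a fixed point of $\BB_\CC$.
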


This theorem should not be surprising. The functions $R_{F^n}$ converge locally uniformly to $h_{\mu_f}+ C_F$ for an explicit constant $C_F$ (see Theorem~\ref{thm:RFconvergence}), the sets $\Min(f^n)$ are the minimizers for $R_{F^n}$, and $\Bary(\mu_f)$ is the minimizer of $h_{\mu_f}$. 

However, it's important to note that uniform convergence is not, in general, enough to ensure the Hausdorff convergence of minimizing sets. The extra ingredient needed in our context is given by the following result:

\begin{prop}\label{prop:minsetsarebounded}
There exists $R>0$ so that the sets $\Min(f^n) \subseteq B_R(0)$ for all $n\in \NN$.
\end{prop}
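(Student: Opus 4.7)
The plan is to combine the quantitative lower bound on $R_{F^n}$ derived in the proof of Corollary~\ref{cor:levelbounded} with the locally uniform convergence $d^{-n} R_{F^n} \to h_{\mu_f} + C_F$ of Theorem~\ref{thm:RFconvergence}, to produce an $n$-independent bound on the hyperbolic distance from any minimizer $[\gamma^{(n)}] \in \Min(f^n)$ to $[\textrm{id}]$.

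The first step is to bound $\min R_{F^n}$ from above by $O(d^n)$. Evaluating at $0 \in \BB_\CC$ (equivalently $[\textrm{id}] \in \SL_2(\CC)/\SU_2(\CC)$), Theorem~\ref{thm:RFconvergence} gives that $d^{-n} R_{F^n}(0)$ converges and hence is bounded; we may choose a constant $M \geq 0$ (depending only on $F$) so that $R_{F^n}(0) \leq M d^n$ for every $n$. Since the minimum of $R_{F^n}$ is at most any of its values,
$$
R_{F^n}([\gamma^{(n)}]) \;=\; \min R_{F^n} \;\leq\; R_{F^n}(0) \;\leq\; M d^n \quad \text{for every } [\gamma^{(n)}] \in \Min(f^n)\ .
$$

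The second step is to apply the lower bound from the proof of Corollary~\ref{cor:levelbounded}. Using the submultiplicativity $C_1(F^n) \geq C_1(F)^{(d^n - 1)/(d - 1)}$ together with Theorem~\ref{thm:growthrates}, one has
$$
R_{F^n}([\gamma]) \;\geq\; \frac{d^n - 1}{2}\, \dSL([\gamma], [\textrm{id}]) \;+\; \frac{d^n - 1}{d - 1} \log C_1(F)
$$
for every $[\gamma]$. Inserting $[\gamma] = [\gamma^{(n)}]$ and the upper bound $R_{F^n}([\gamma^{(n)}]) \leq M d^n$ and rearranging gives
$$
\dSL([\gamma^{(n)}], [\textrm{id}]) \;\leq\; \frac{2 M d^n}{d^n - 1} - \frac{2}{d - 1} \log C_1(F) \;\leq\; \frac{2 M d}{d - 1} - \frac{2}{d - 1} \log C_1(F)\ ,
$$
where the second inequality uses that $d^n/(d^n - 1) \leq d/(d - 1)$ for $n \geq 1$. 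The right-hand side is independent of $n$, so the classes $[\gamma^{(n)}]$ lie in a fixed $\dSL$-ball about $[\textrm{id}]$. Transferring through the isometry $\Gamma_{\BB}: \SL_2(\CC)/\SU_2(\CC) \to \BB_\CC$ of Proposition~\ref{prop:mapsandmetric} produces the desired $R > 0$ with $\Min(f^n) \subseteq B_R(0)$ uniformly in $n$.

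The argument is essentially bookkeeping once the two main ingredients (Theorems~\ref{thm:growthrates} and~\ref{thm:RFconvergence}) are in place, and I do not anticipate any genuine obstacle. The only subtle point worth flagging is that the level-set radius extracted from Corollary~\ref{cor:levelbounded}, as stated, grows linearly in the level $\alpha$, so one might worry that taking $\alpha = \min R_{F^n}$ (which can grow like $d^n$) would produce a radius that blows up with $n$. The resolution is that the coefficient of $\dSL$ in the lower bound for $R_{F^n}$ is also of order $d^n$, and these two factors cancel exactly, yielding an $n$-independent bound.
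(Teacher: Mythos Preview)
Your proof is correct, but it takes a slightly different route than the paper in one step. Both arguments use the same lower bound on $R_{F^n}$ coming from Theorem~\ref{thm:growthrates} together with $C_1(F^n) \geq C_1(F)^{(d^n-1)/(d-1)}$. The difference lies in how you bound the minimum value from above. You invoke Theorem~\ref{thm:RFconvergence} to get $R_{F^n}(0) \leq M d^n$ via boundedness of the convergent sequence $d^{-n} R_{F^n}(0)$. The paper instead uses the \emph{upper} half of Theorem~\ref{thm:growthrates} evaluated at the origin, together with the companion estimate $C_2(F^n) \leq C_2(F)^{(d^n-1)/(d-1)}$, to obtain directly $R_{F^n}(0) \leq \frac{d^n-1}{d-1}\log C_2(F)$; this yields the explicit radius $R = \frac{2}{d-1}\log\frac{C_2(F)}{C_1(F)}$. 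The paper's approach is more self-contained (it only uses Theorem~\ref{thm:growthrates} for both bounds and avoids relying on the asymptotic result whose minimizing-set corollary is precisely what we are trying to establish), and it gives an explicit constant; your approach is a bit more conceptual but imports heavier machinery than is actually needed. There is no logical circularity in your argument, since Theorem~\ref{thm:RFconvergence} is proved independently of this proposition.
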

\begin{proof}
Transferring Theorem~\ref{thm:growthrates} to the ball model and applying it to the iterate $F^n$ yields
\begin{equation}\label{eq:growthestimatesballmodel}
\frac{d^n-1}{2} \dB(\zeta, 0) + \log C_1(F^n) \leq R_{F^n}(\zeta) \leq \frac{d^n+1}{2} \dB(\zeta, 0) + \log C_2(F^n)
\end{equation} Note that
\[
C_1(F) ||F^{n-1}(X,Y)||^d \leq ||F^n(X,Y)|| = ||F(F^{n-1}(X,Y))|| \leq C_2(F) ||F^{n-1}(X,Y)||^d
\] for any $||X,Y|| = 1$. Applying this inductively, we find
\[
C_1(F)^{\frac{d^n-1}{d-1}} \leq ||F^n(X,Y)|| \leq C_2(F)^{\frac{d^n-1}{d-1}}
\] for all $n$ and all $||X,Y|| =1$; in particular,
\[
C_1(F)^{\frac{d^n-1}{d-1}} \leq C_1(F^n) \ \textrm{ and } \ C_2(F^n) \leq C_2(F)^{\frac{d^n-1}{d-1}}\ .
\]Inserting this into (\ref{eq:growthestimatesballmodel}) yields
\begin{equation}\label{eq:refinedgrowthestimatesball}
\frac{d^n-1}{2} \dB(\zeta, 0) + \frac{d^n-1}{d-1} \log C_1(F) \leq R_{F^n}(\zeta) \leq \frac{d^n+1}{2} \dB(\zeta, 0) + \frac{d^n-1}{d-1} \log C_2(F)\ .
\end{equation} Thus, the largest that $R_{F^n}(0)$ can be is $\frac{d^n-1}{d-1} \log C_2(F)$. But note that, if
\[
\dB(\zeta, 0) > \frac{2}{d-1} \log \frac{C_2(F)}{C_1(F)} :=R
\] then (\ref{eq:refinedgrowthestimatesball}) implies
\[
R_{F^n}(0) \leq \frac{d^n-1}{d-1} \log C_2(F) < \frac{d^n-1}{2} \dB(\zeta, 0) + \frac{d^n-1}{d-1} \log C_1(F) \leq R_{F^n}(\zeta)\ .
\] In particular, $R_{F^n}(\zeta)$ is not a minimizer of $R_{F^n}$. Hence, $R_{F^n}$ must be minimized on $B_R(0)$, i.e $\Min(f^n) \subseteq B_R(0)$. Since $R$ is independent of $n$, we see that $\Min(f^n)\subseteq B_R(0)$ for all $n\in \NN$ as claimed.
\end{proof}

We also make use of the following basic fact from analysis
\begin{lemma}\label{lem:analysislemma}
Suppose $f_n$ is a sequence of functions on a metric space $(X,d)$ that converges locally uniformly to a continuous function $f$ on $X$. If $x_n\in X$ is a  sequence of minimizers of $f_n$, i.e. for any $y\in X$ we find $f_n(y) \geq f_n(x_n)$, and if $x_n \to \hat{x}\in X$, then $\hat{x}$ is a minimizer of $f$. 
\end{lemma}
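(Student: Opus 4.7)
The plan is to pass to the limit in the minimizer inequality $f_n(y) \geq f_n(x_n)$ for an arbitrary $y \in X$ and conclude $f(y) \geq f(\hat{x})$. The left-hand side converges to $f(y)$ because local uniform convergence certainly implies pointwise convergence, so the only substantive step is to verify that $f_n(x_n) \to f(\hat{x})$ as $n\to\infty$.

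For this I would observe that the set $K := \{x_n : n\in \NN\} \cup \{\hat{x}\}$ is compact in $(X,d)$, since it consists of a convergent sequence together with its limit. By hypothesis $f_n \to f$ uniformly on $K$, and by the triangle inequality
\[
|f_n(x_n) - f(\hat{x})| \leq \sup_{x \in K} |f_n(x) - f(x)| + |f(x_n) - f(\hat{x})|\ .
\]
The first summand tends to $0$ by uniform convergence on $K$, and the second tends to $0$ by continuity of $f$ at $\hat{x}$, so $f_n(x_n) \to f(\hat{x})$.

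Combining these, passing to the limit $n \to \infty$ in $f_n(y) \geq f_n(x_n)$ yields $f(y) \geq f(\hat{x})$; since $y \in X$ was arbitrary, $\hat{x}$ minimizes $f$. There is no real obstacle to this argument; the only delicate point is the joint limit $f_n(x_n)$, which is precisely why one needs uniform convergence on a compact set containing the tail of $(x_n)$ rather than merely pointwise convergence.
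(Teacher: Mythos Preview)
Your proof is correct and is the standard argument for this elementary fact. The paper itself does not supply a proof of this lemma, simply stating it as a ``basic fact from analysis''; your argument fills in exactly what is needed, with the compactness of $K = \{x_n\}\cup\{\hat{x}\}$ being the key observation that turns local uniform convergence into the joint limit $f_n(x_n)\to f(\hat{x})$.
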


We are ready to prove Theorem~\ref{thm:Minfconvergence}:
\begin{proof}[Proof of Theorem~\ref{thm:Minfconvergence}]
Since $\Bary(\mu_f)=\{y\}$ is a single point, it's enough to show that for every $\epsilon>0$, can choose $N$ so that $\Min(f^n)\subseteq B_\epsilon(y)$ for all $n\geq N$. Suppose this is not the case; then there exists $\epsilon_0 >0$ and a sequence of points $x_{n_k}\in \Min(f^{n_k})$ with $x_{n_k}\not\in B_{\epsilon_0}(y)$ for all $k\in \NN$. By Proposition~\ref{prop:minsetsarebounded}, the $x_{n_k}$ all lie in some fixed $B_R(0)$, hence we can extract a convergent subsequence again written $x_{n_k} \to \hat{x}$. By Lemma~\ref{lem:analysislemma}, the limit $\hat{x}$ is a minimizer of the limiting function $\Gamma_F(\cdot) = h_{\mu_f}(\cdot) + C_F$ of the functions $\frac{1}{d^n} R_{F^n}$; in particular, $\hat{x}\in \Bary(\mu_f) = \{y\}$, i.e. $\hat{x} = y$. But this is a contradiction, since $\dB(x_{n_k}, y) > \epsilon_0$ for all $k$. 
\end{proof}

\section{Case Study: $d=1$}\label{sect:done}
In this section we explicitly compute the set $\Min(f)$ and the min-invariant for rational maps $f\in K(z)$ of degree 1, where $K$ can be $\RR$ or $\CC$. Up to conjugacy, any map $f\in K(z)$ with degree $d=1$ can be written in one of the following three forms; note that the lifts have been chosen so that $F(X,Y) = M\cdot(X,Y)^\top$ for the appropriate $M\in \SL_2(K)$:
\begin{enumerate}
\item $f(w) = w$, with lift $F(X,Y) = (X,Y)$;
\item $f(w) = w+1$ with lift $F(X,Y) = (X+Y, Y)$, or
\item $f(w) = \lambda w$ for $\lambda\in K\setminus\{0,1\}$, with $F(X,Y) = (\lambda^{1/2} X , \lambda^{-1/2} Y)$.
\end{enumerate}

In each case, $F(X,Y) = A\cdot (X,Y)^\top$ for some matrix in $A\in \SL_2(K)$; a direct calculation shows that $|\Res(F)| = |\det(A)|^2 = 1$. Thus, to compute $m_K(f)$ it suffices to determine the minimum value of $R_F$ for the particular lifts given above.

\begin{lemma}\label{lem:minilemmadist}
Let $f(w) = aw+b$ and let $\gamma(w) = tw+z$. Then
\[
\cosh(\dH(f^\gamma(j), j)) = 1+ \frac{\left|\frac{f(z)-z}{t}\right|^2 + (|a|-1)^2}{2|a|}\ .
\]
\end{lemma}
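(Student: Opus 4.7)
The plan is to exploit the fact that the hyperbolic distance $\dH$ is invariant under Möbius transformations acting on $\hh_K$, which lets us eliminate the conjugation entirely. Since $\gamma$ acts by isometry on $\hh_K$, we can write
\[
\dH(f^\gamma(j), j) \;=\; \dH(\gamma^{-1}f\gamma(j), j) \;=\; \dH(f(\gamma(j)), \gamma(j)).
\]
By Lemma~\ref{lem:matrixdecomps}(b) applied to the representative $\gamma_{z,t}$, we have $\gamma(j) = (z,t)$ in the upper half-space model. So the problem reduces to computing $f((z,t))$ for the affine map $f(w) = aw + b$ viewed as an isometry of $\hh_K$, and then plugging into the explicit distance formula (\ref{eq:hypdist}).

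The next step is to identify $f((z,t))$ explicitly. I would decompose $f$ as a composition of three isometries: a rotation $\tau(w) = (a/|a|)w \in \SU_2(K)$, the dilation $w\mapsto |a|w$ given by $\gamma_{0,|a|}$, and the translation $w\mapsto w+b$ given by $\gamma_{b,1}$. The latter two act on $\hh_K$ by $(z,t)\mapsto (|a|z,|a|t)$ and $(z,t)\mapsto (z+b,t)$ respectively, directly from the formula noted just below Proposition~\ref{prop:usualhypisoms}. The rotation $\tau$ sends $(z,t)$ to $((a/|a|)z, t)$ (for $K=\CC$ this is the action of a diagonal unitary matrix, and for $K=\RR$ with $a<0$ it is the reflection $(x,t)\mapsto(-x,t)$). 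Composing gives the clean formula
\[
f((z,t)) \;=\; (az+b,\ |a|t).
\]

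With $f(\gamma(j)) = (az+b,|a|t)$ and $\gamma(j) = (z,t)$ in hand, I apply the distance formula (\ref{eq:hypdist}):
\[
\cosh \dH\bigl((az+b,|a|t),(z,t)\bigr) \;=\; 1 + \frac{|az+b-z|^2 + (|a|t - t)^2}{2\,|a|t\cdot t}.
\]
Recognizing $az+b-z = f(z)-z$ and factoring $t^2$ out of the numerator yields exactly
\[
1 + \frac{\left|\tfrac{f(z)-z}{t}\right|^2 + (|a|-1)^2}{2|a|},
\]
which is the asserted identity.

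The only nontrivial ingredient is justifying the formula $f((z,t)) = (az+b,|a|t)$, i.e.\ checking that the affine Möbius transformation extends to hyperbolic space in this simple way. The case $K = \CC$ is the quaternionic/Poincaré extension of a diagonal $\SL_2(\CC)$ matrix composed with an upper-triangular translation, and the case $K = \RR$ can be read off from the usual upper half-plane action (with the minor subtlety that for $a < 0$ the map $w\mapsto aw$ is orientation-reversing, but still an isometry). Everything else is a direct calculation.
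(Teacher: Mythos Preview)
Your proof is correct and follows essentially the same approach as the paper: extend the affine map to an isometry of $\hh_K$ and apply the distance formula~(\ref{eq:hypdist}). The only difference is organizational---the paper first computes $f^\gamma(w) = aw + \frac{f(z)-z}{t}$ explicitly and then evaluates at $j$, whereas you invoke isometry invariance to rewrite $\dH(f^\gamma(j),j) = \dH(f(\gamma(j)),\gamma(j))$ and evaluate $f$ at $(z,t)$; both routes reduce to the same extension formula $(aw+b)\cdot(z,t) = (az+b,|a|t)$, which you justify more carefully than the paper does.
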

\begin{proof}
This is a straightforward calculation. Let $f(w) = aw + b$, and $\gamma(w) =  tw + z$. Then $f^{\gamma}(w) = aw + \frac{f(z)-z}{t}$, from which we find that
\begin{align*}
f^\gamma(j) =  |a| j + \frac{f(z)-z}{t}\ .
\end{align*} By (\ref{eq:hypdist}), we have
\begin{equation}\label{eq:hypdistconjugate}
\cosh(\dH(f^\gamma(j), j)) = 1+ \frac{\left|\frac{f(z)-z}{t}\right|^2 + (|a| - 1)^2}{2|a|}\ .
\end{equation} 

\end{proof}

Let $f(w) = aw+b$, and let $F(X,Y) = \left(\begin{matrix} \sqrt{a} & \frac{b}{\sqrt{a}}\\ 0 & \frac{1}{\sqrt{a}}\end{matrix}\right) \cdot (X,Y)^\top$ be the homogeneous lift normalized to have determinant 1. The conjugate $f^\gamma$ can be lifted to a map $F^\gamma(X,Y) = M_\gamma\cdot (X,Y)^\top$; recall that $\dH(f^\gamma(j), j)$ computed in Lemma~\ref{lem:minilemmadist} is precisely the exponent $A$ appearing in the decomposition
\[
M_{\gamma} = \tau \cdot \left(\begin{matrix} e^{A/2} & 0 \\ 0 & e^{-A/2} \end{matrix}\right) \cdot \sigma
\] where $\tau, \sigma\in \SU_2(K)$. In Lemma~\ref{lem:simplestcalculationRF} we saw that
\[
R(M_\gamma) = \int_{S^3} \log ||M_\gamma \cdot (X,Y)^\top|| d\Vol_{S^3} =  \left\{\begin{matrix} \log(1+e^A) - \frac{A}{2} - \log 2\ , & K = \RR\\[5pt] -\frac{1}{2} + \frac{A}{2} \left(\frac{e^{2A} +1}{e^{2A}-1}\right)\ , & K = \CC \end{matrix}\right.\ .
\]

The following lemma says that $R(M_\gamma)$ is increasing as a function of $A = \dH(f^\gamma(j), j)$:

\begin{lemma}
The functions $x\mapsto \log(1+e^x) - \frac{x}{2}$ and $x\mapsto x\coth(x)$ are increasing on $(0, \infty)$.
\end{lemma}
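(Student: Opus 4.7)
The plan is to verify each claim by direct differentiation and then reduce the positivity of each derivative to a well-known elementary inequality. Both statements are one-variable calculus, so the argument should be short; the only care needed is that the algebraic simplifications land on a manifestly nonnegative expression.

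First I would handle $\varphi(x) = \log(1+e^x) - x/2$. Differentiating gives
\[
\varphi'(x) = \frac{e^x}{1+e^x} - \frac12 = \frac{2e^x - (1+e^x)}{2(1+e^x)} = \frac{e^x - 1}{2(1+e^x)}.
\]
For $x > 0$ we have $e^x > 1$, and the denominator is positive, so $\varphi'(x) > 0$.

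Next I would handle $\psi(x) = x\coth(x)$ on $(0,\infty)$. Differentiating,
\[
\psi'(x) = \coth(x) - x\,\operatorname{csch}^2(x) = \frac{\cosh(x)\sinh(x) - x}{\sinh^2(x)} = \frac{\tfrac12 \sinh(2x) - x}{\sinh^2(x)}.
\]
The denominator is positive for $x > 0$, so it suffices to show $\sinh(2x) > 2x$ for $x > 0$. This follows from the Taylor expansion $\sinh(y) = y + y^3/6 + \cdots > y$ for $y > 0$, applied at $y = 2x$. Therefore $\psi'(x) > 0$ on $(0,\infty)$.

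I do not anticipate any real obstacle here: both inequalities reduce to standard facts ($e^x > 1$ for $x>0$ and $\sinh(y)>y$ for $y>0$). The only subtlety is bookkeeping in the algebraic simplification of $\psi'$, which is resolved cleanly by using the double-angle identity $2\sinh(x)\cosh(x) = \sinh(2x)$.
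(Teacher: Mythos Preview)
Your proof is correct. For the first function you and the paper do exactly the same thing (your simplified derivative $\frac{e^x-1}{2(1+e^x)}$ is in fact the correct one; the paper's $\frac{e^x-1}{e^x+1}$ drops a harmless factor of $2$).

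For $x\coth(x)$ the approaches diverge slightly. The paper rewrites $x\coth(x)=x\frac{e^{2x}+1}{e^{2x}-1}$, differentiates to get numerator $e^{4x}-4xe^{2x}-1$, and then expands this as a Taylor series in $x$, checking that the coefficients $\frac{4^n-n\,2^{n+1}}{n!}$ are nonnegative. You instead stay with hyperbolic functions, use the double-angle identity, and reduce to the single inequality $\sinh(y)>y$ for $y>0$. Your route is shorter and lands on a more recognizable inequality; the paper's route is equivalent (your numerator $\tfrac12\sinh(2x)-x$ equals $\frac{e^{4x}-4xe^{2x}-1}{4e^{2x}}$) but requires verifying the sign of a Taylor coefficient rather than invoking a standard fact.
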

\begin{proof}
For the first function, we compute
\[
\partial_x \left(\log(1+e^x) - \frac{x}{2}\right) = \frac{e^x - 1}{e^x+1}\ ,
\] which is strictly positive for $x\in (0,\infty)$. For the second function, note $x\coth(x) = x\left(\frac{e^{2x}+1}{e^{2x}-1}\right)$, whose derivative in $x$ is
\[
\partial_x \left(x\coth(x)\right) = \frac{e^{4x} - 4x e^{2x} -1}{(e^{2x}-1)^2}\ .
\] The numerator can be expanded as a Taylor series
\[
e^{4x} - 4x e^{2x} -1 = \sum_{n=1}^\infty \left(\frac{4^n}{n!} - \frac{2^{n+1}}{(n-1)!}\right) x^n\ ;
\]note that the general term in this series can be simplified to $\frac{4^n - n 2^{n+1}}{n!}$; for $n=1,2$ this term is 0, but for $n\geq 3$ this term is strictly positive. Consequently, for $x>0$ the series is strictly positive, i.e. $\partial_x (x\coth(x)) >0$ on $(0,\infty)$.
\end{proof}

We are now ready to compute $\Min(f)$ and the min-invariant for maps of degree $d=1$. We follow the cases outlined above:

\begin{enumerate}
\item If $f(\alpha) = \alpha$, then $R_F$ is constant, so that $\Min(f) = \hh_K$ and $m_K(f) = 1$, where $F(X,Y) = (X,Y)$ is the trivial lift of $f$ to $\SL_2(K)$. 

\item If $f(\alpha)$ is conjugate to $\alpha\mapsto\alpha+1$, then by Lemma~\ref{lem:minilemmadist} we find that
\[
\cosh(\dH(f^\gamma(j), j)) = 1 + \frac{1}{2t^2}\ .
\] Note that this is independent of $z$. As $t$ varies in $(0,\infty)$, the fact that $\cosh(x)$ is monotonically increasing implies that $A(t)=\dH(f^\gamma(j), j)$ is decreasing; consequently, $R_F([\gamma_{z,t}])$ is also decreasing as $t$ varies in $(0,\infty)$. An explicit calculation shows that
\begin{align*}
\lim_{t\to 0} R_F([\gamma_{z,t}]) &= \infty\ , \textrm{ and }\\
\lim_{t\to\infty}R_F([\gamma_{z,t}]) & = 0\ . 
\end{align*} Thus, along the oriented geodesics $[\infty,z]$ in $\hh_K$, the function $R_F$ increases monotonically from 0 to $\infty$. We conclude that $R_F$ attains its minimum at $\{\infty\} \in \partial \hh_K$, and that its minimum value is 0.

\item If $f(\alpha)$ is conjugate to $\alpha \mapsto \lambda\alpha$ for $\lambda\in K\setminus \{0, 1\}$, then by Lemma~\ref{lem:minilemmadist} we have
\[
\cosh(\dH(f^\gamma(j), j)) = 1 + \frac{(|\lambda| -1)^2}{2|\lambda|} + \frac{|\lambda-1|^2 \cdot |z|^2}{2|\lambda| t}\ .
\]Note that, if $z=0$, then $\dH(f^\gamma(j), j)$ is constant; hence, $R_F$ is constant along the hyperbolic geodesic $[0, \infty]$, and we can explicitly compute
\[
R_F([\gamma]) = R_F([\textrm{id}]) = \left\{\begin{matrix} \log(1+|\lambda|) - \frac{\log|\lambda|}{2} - \log 2\ , & K = \RR \\[5pt] -\frac{1}{2} + \frac{\log|\lambda|}{2} \cdot \left(\frac{|\lambda|^2 + 1}{|\lambda|^2 - 1}\right)\ , & K = \CC \end{matrix}\right.\ .
\]Otherwise, we argue as in the previous case, noting that as $t$ varies in $(0,\infty)$ we find that $A(t) = \dH(f^\gamma(j), j)$ is decreasing, so that $R_F([\gamma_{z,t}])$ is also decreasing as $t$ varies in $(0,\infty)$. Now an explicit calculation shows that
\begin{align*}
\lim_{t\to 0} R_F([\gamma_{z,t}]) &= \infty\ , \textrm{ and }\\
\lim_{t\to\infty}R_F([\gamma_{z,t}]) & = R_F([\textrm{id}])\ . 
\end{align*} Thus, we've shown that $R_F$ attains its minimum value $m_K(f)$ along the entire geodesic $[0,\infty]$, and off of this geodesic $R_F$ is strictly larger than this value.
\end{enumerate}

\end{document}